\theoremstyle{plain}
  \newtheorem{thm}{Theorem}[section]
  \newtheorem{lem}[thm]{Lemma}
  \newtheorem{prop}[thm]{Proposition}
  \newtheorem{cor}[thm]{Corollary}
\theoremstyle{definition}
  \newtheorem{defn}[thm]{Definition}
  \newtheorem{exmp}[thm]{Example}
  \newtheorem{rem}[thm]{Remark}
\theoremstyle{break}
\newcommand{\Ua}[0]{\makebox[1ex][l]{\lower.15ex
                                 \hbox{$\uparrow$}}\kern-1ex\lower-.15ex
                                 \hbox{$\uparrow$}}
\newcommand{\Da}[0]{\makebox[1ex][l]{\lower.15ex
                                 \hbox{$\downarrow$}}\kern-1ex\lower-.15ex
                                 \hbox{$\downarrow$}}
\journal{}
\begin{document}

\begin{frontmatter}



\title{One-step Closure, Ideal Convergence and Monotone Determined Space\tnoteref{t1}}
\tnotetext[t1]{This research is supported by the NSFY of China (Nos. 11401435)}.

\author[Wang]{Wu Wang\corref{mycorrespondingauthor}}
\cortext[mycorrespondingauthor]{Corresponding author}
\ead{wangwu@alu.scu.edu.cn}
\address[Wang]{Basic Course Department, Zhonghuan Information College Tianjin University of Technology, Tinjin 300380, China}

\begin{abstract}
Monotone determined spaces are natural topological extensions of dcpo. Its main purpose is to build an extended framework for domain theory. In this paper, we study the one-step closure and ideal convergence on monotone determined space. Then we also introduce the equivalent characterizations of c-spaces and locally hypercompact space. The main results are: (1) Every c-space has one-step closure and every locally hypercompact space has weak one-step closure; (2) A monotone determined space has one-step closure if and only if it is d-meet continuous and has weak one-step closure. (3) $\mathcal{IS}$-convergence(resp. $\mathcal{IGS}$-convergence ) is topological iff $X$ is a c-space (resp. locally hypercompact space); (4) If $X$ is a d-meet continuous space, then the following three conditions are equivalent to each other: (i) $X$ is c-space; (ii) The net $(x_{j})_{j\in J}$ $\mathcal{ISL}$-converges to $x$ iff $(x_{j})_{j\in J}$ $\mathcal{I}$-converges to $x$ with respect to Lawson topology; (iii) The net $(x_{j})_{j\in J}$ $\mathcal{IGSL}$-converges to $x$ iff $(x_{j})_{j\in J}$ $\mathcal{I}$-converges to $x$ with respect to Lawson topology.
\end{abstract}

\begin{keyword}
domain, monotone determined space, c-spaces, locally hypercompact space, one-step closure, ideal convergence
\MSC 54B20; 54D99; 06B35; 06F30

\end{keyword}




\end{frontmatter}


\section{Introduction}\label{sec1}

As one of the important branches of mathematics, poset theory aims to provide mathematical models for computer programming languages\cite{1,2,3,6,7}. Continuous domain and quasicontinuous domain are the most important research objects in poset theory. Domain theory is strongly connected with general topology \cite{4,8}. The extension of poset theory to topological space is one of the important research directions of poset theory\cite{8,9,10,11,12}.

Monotone determined spaces were introduced in \cite{16,17,25}, and were shown to be very appropriate topological extensions
of dcpos. Many classical structures such as c-spaces, b-spaces, locally hypercompact spaces are all monotone determined spaces. Monotone determined spaces form a category that has
many fine properties. For example, the category with monotone determined spaces as objects and continuous functions as morphisms is a cartesian closed category.

In \cite{25}, Xie and Kou introduced the approximation relation on $T_{0}$ topological spaces through convergent subsets and then defined d-continuous spaces. They showed that a monotone determined spaces is continuous iff it is a c-space. Zou, Li and Ho introduced the definition of one-step closure on poset\cite{26} and showed that every continuous domain has one-step closure.

The convergence of nets is an important tool in the study of topological spaces. Nets can completely describe the open sets of topological spaces. The notions of liminf convergence and $\mathcal{S}$-convergence were introduced to characterize continuous domains\cite{8}. It was shown that $\mathcal{S}$-convergence on a dcpo $L$ is topological iff $L$ is a continuous dcpo. In \cite{5}, the notion of $\mathcal{S}^{*}$-convergence was introduced to characterize
the notion of quasicontinuous domains. It was shown that $\mathcal{S}^{*}$-convergence is topological iff $L$ is quasicontinuous and the topology generated by it coincides with the Scott topology. In \cite{23}, D. N. Georgiou, A. C. Megaritis, I. Naidoo, et al. introduced another convergence of nets on posets, called ideal-lim-inf-convergence. They showed that for a poset $L$, the ideal-lim-inf-convergence is topological if and only if $X$ is a continuous poset.

In this paper, we shall introduce the definitions of one-step closure and weak one-step closure on directed spaces. And then we define the notions of $\mathcal{IS}$-convergence, $\mathcal{IGS}$-convergence
on $T_{0}$ topological space $X$ as generalizations of ideal-lim-inf-convergence in domain theory and $\mathcal{ISL}$-convergence, $\mathcal{IGSL}$-convergence as generalizations of liminf convergence. At last some characterizations of c-spaces and locally hypercompact spaces will be given.

In Section 3, We first give the equivalent conditions of c-spaces. Then we prove that every c-space has one-step closure and show that monotone determined space is d-meet continuous space if it has one-step closure. In Section 4, we prove that every locally hypercompact spaces has weak one-step closure and show that a monotone determined space has one-step
closure if and only if it is d-meet continuous and has weak one-step closure. In Section 5 and 6,  we draw the following conclusions: (1) $\mathcal{IS}$-convergence(resp. $\mathcal{IGS}$-convergence ) is topological iff $X$ is a
c-space (resp. locally hypercompact space). (2) If $X$ is a d-meet continuous space, then the following three conditions are equivalent
to each other: (i)$X$ is c-space; (ii)The net $(x_{j})_{j\in J}$ $\mathcal{ISL}$-converges to $x$ iff $(x_{j})_{j\in J}$ $\mathcal{I}$-converges to $x$ with respect to Lawson topology; (iii) The net $(x_{j})_{j\in J}$ $\mathcal{IGSL}$-converges to $x$ iff $(x_{j})_{j\in J}$ $\mathcal{I}$-converges to $x$ with respect to Lawson topology.

\section{Preliminaries}\label{sec2}

Now, we introduce the concepts needed in this paper. The readers can also consult\cite{8}. A nonempty set $L$ endowed with a partial order $\leq$ is called  poset. A poset $L$ is called a directed complete poset (dcpo, for short) if any directed subset of $L$ has a sup in $L$. For $x, y\in L$, we say that $x$ is way below $y$ (denoted by $x\ll_{s} y$) if for any directed set $D$ of $L$, $y\leq \vee D$ implies that $x\leq d$ for some $d\in D$. A poset $L$ is called continuous if for any $x\in L$, $\{a\in L: a\ll_{s} x\}$ is directed set and has $x$ as its supremum. For a subset $A$ of $L$, let $\uparrow A =\{x\in L: \exists a\in A, a\leq x\}$, $\downarrow A =\{x\in L:\exists a\in A, x\leq a\}$. We use $\uparrow a$ (resp.$\downarrow a$) instead of $\uparrow \{a\}$(resp. $\downarrow \{a\}$) when $A=\{a\}$. $A$ is called an upper (resp. a lower) set if $A=\uparrow A$ (resp. $A=\downarrow A$).

Let $L$ be a poset and $U\subseteq L$. Then $U$ is called Scott open iff it satisfies: (1) $U=\uparrow U$; (2) For any directed sets $D\subseteq L$, $\vee D\in U$ implies $D\cap U\neq\emptyset$. The collection of all Scott open subsets of $L$ is called the Scott topology and denoted by $\sigma(L)$.

In this paper, topological spaces will always be supposed to be $T_{0}$ spaces. For a topological space $X$, its topology is denoted
by $\tau$. The partial order $\leq$ defined on $X$ by $x\leq y \Leftrightarrow x\in cl_{\tau}\{y\}$ is called the specialization order,
where $cl_{\tau}\{y\}$ is the closure of $\{y\}$. From now on, all order theoretical statements about $T_{0}$ spaces, such as
upper sets, lower sets, directed sets, and so on, always refer to the specialization order. Unless otherwise specified, the internal and closure in this paper are always relative to the topology $\tau$.

A net of a topological space $X$ is a map $\xi: J\rightarrow X$, where $J$ is a directed set. Usually, we denote a net by $(x_{j})_{j\in J}$. Let
$x\in X$, saying $(x_{j})_{j\in J}$ converges to $x$, denote by $(x_{j})_{j\in J}\rightarrow_{\tau} x$, if $(x_{j})_{j\in J}$ is eventually in every
open neighborhood of $X$, that is, for any given open neighborhood $U$ of $x$, there exists $j_{0}\in J$ such that for any $j\in J$, $j\geq j_{0}\Rightarrow x_{j}\in U$.

Let $X$ be a $T_{0}$ topological space,  then any directed subset of $X$ can be regarded as a net, and its index set is itself. We use $D\rightarrow_{\tau} x$ to represent $D$ converges to $x$. Define notation $D(X)=\{(D, x): x\in X, D$ is a directed subset of $X$ and $D\rightarrow_{\tau} x\}$. It is easy to verify that, for any $x, y\in X$, $x\leq y\Leftrightarrow \{y\}\rightarrow_{\tau} x$. Therefore, if $x\leq y$, then
$(\{y\}, x)\in D(X)$. Next, we give the concept of monotone determined space. A subset $U$ of $X$ is called a monotone determined open set if $\forall(D, x)\in D(X)$, $x\in U\Rightarrow D\cap U\neq \emptyset$. Denote all monotone determined  open sets of $X$ by $\mathcal{D}(X)$. Obviously, every open set of X is monotone determined  open, that is, $\tau\subseteq \mathcal{D}(X)$.

\begin{defn}\label{2.1}\cite{16,17,25}
Let $X$ be a $T_{0}$ topological space. $X$ is called monotone determined space if every monotone determined open set of $X$ is an open set, that is, $\mathcal{D}(X)=\tau$.
\end{defn}

\begin{rem}\label{2.2} \cite{16,17,25}
Let $X$ be a $T_{0}$ topological space.

(1) The definition of monotone determined space here is equivalent to definition in \cite{17}.

(2) Every poset equipped with the Scott topology is a monotone determined space \cite{16,18}, besides, each
Alexandroff space is a monotone determined space. Thus, the monotone determined space extends the concept of
the Scott topology.

(3) If $U\in \mathcal{D}(X)$, then $U=\uparrow U$.

(4) $X$ equipped with $\mathcal{D}(X)$ is a $T_{0}$ topological space such that $\leq_{\mathcal{D}}=\leq$, where $\leq_{\mathcal{D}}$ is the specialization
order relative to $\mathcal{D}(X)$.

(5) For a directed subset $D$ of $X$, $D\rightarrow_{\tau} x\Leftrightarrow D\rightarrow_{\mathcal{D}(X)} x$ for any $x\in X$, where $D\rightarrow_{\mathcal{D}(X)} x$ means that $D$ converges to $x$ with respect to the topology $\mathcal{D}(X)$.
\end{rem}

\begin{defn}\label{2.3} \cite{19,25}
Let $X$ be a monotone determined space and $x, y\in X$. We say that $x$ is d-way below $y$, denoted by $x\ll_{d} y$, if for any directed subset $D$ of $X$, $D\rightarrow_{\tau} y$ implies $x\leq d$ for some $d\in D$. If $x\ll_{d} x$, then $x$ is called d-compact element of $X$.
\end{defn}

For any monotone determined space $X$ and $x\in X$, we denote $K_{d}(X)=\{x\in X: x\ll_{d} x\}$, $\Da_{d} x=\{y\in X: y\ll_{d} x\}$,
and $\Ua_{d} x=\{y\in X: x\ll_{d} y\}$. The d-way below relation is a natural extension of way-below relation. Similarly, the notion
of continuous space can be defined.

\begin{defn}\label{2.4} \cite{19,25}
Let $X$ be a monotone determined space. $X$ is called d-continuous if $\Da_{d} x$ is directed and $\Da_{d} x\rightarrow_{\tau} x$ for any $x\in X$.
\end{defn}

A topological space $X$ is called a c-space, if it is $T_{0}$ space and for any $y\in U\in \tau$, there exists $x\in X$ such that $y\in int_{\tau}(\uparrow x)\subseteq \uparrow x\subseteq U$.

Many scholars have done a lot of researches on c-space, and have given many meaningful conclusions about c-space. For example, a poset equipped with Scott topology is c-space; The concept of c-cone is given by combining c-space with topological cone in \cite{20}.

\begin{thm}\label{2.5}\cite{19,25}
Let $X$ be a monotone determined space. Then the following three conditions are equivalent to each other.

(1) $X$ is a d-continuous space;

(2) $X$ is a c-space;

(3) There exists a directed set $D\subseteq \Da_{d} x$ such that $D\rightarrow_{\tau} x$ for any $x\in X$.
\end{thm}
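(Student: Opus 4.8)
The plan is to establish all three equivalences through the cycle $(2)\Rightarrow(3)\Rightarrow(1)\Rightarrow(2)$; the remaining implication $(1)\Rightarrow(3)$ is immediate, since d-continuity says precisely that $\Da_d x$ itself is a directed subset of $\Da_d x$ with $\Da_d x\rightarrow_{\tau}x$. Before starting I would record two facts used throughout. First, since every open set is an upper set for the specialization order, a \emph{directed} set $D$ converges to $x$ exactly when $D$ meets every open neighborhood of $x$; so to verify $D\rightarrow_{\tau}x$ it suffices to show $D\cap U\neq\emptyset$ for each open $U\ni x$, once $D$ is known to be directed. Second, the two easy halves of the expected description of $\ll_d$: from $x\in int_{\tau}(\uparrow a)$ one reads off $a\ll_d x$ (any directed set converging to $x$ must meet the open set $int_{\tau}(\uparrow a)\subseteq\uparrow a$), and from $a\ll_d x$ one gets $a\leq x$ by applying the definition to the constant directed set $\{x\}$, which converges to $x$. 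Monotonicity of $\ll_d$ (if $a\leq b\ll_d c$ then $a\ll_d c$) is also immediate and will be used constantly.

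For $(3)\Rightarrow(1)$, fix $x$ and let $D\subseteq\Da_d x$ be the directed set with $D\rightarrow_{\tau}x$ supplied by (3). To see that $\Da_d x$ is directed, take $a,b\ll_d x$; since $D\rightarrow_{\tau}x$, there are $d_1,d_2\in D$ with $a\leq d_1$ and $b\leq d_2$, and directedness of $D$ yields $d_3\in D\subseteq\Da_d x$ above both, a common upper bound of $a,b$ lying in $\Da_d x$. Convergence $\Da_d x\rightarrow_{\tau}x$ then follows because the subset $D$ already meets every open neighborhood of $x$ and $\Da_d x$ is now known to be directed. For $(2)\Rightarrow(3)$, fix $x$ and set $K=\{z\in X: x\in int_{\tau}(\uparrow z)\}$; by the easy half above $K\subseteq\Da_d x$. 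The c-space property makes $K$ directed: if $z_1,z_2\in K$ then $x\in int_{\tau}(\uparrow z_1)\cap int_{\tau}(\uparrow z_2)=int_{\tau}(\uparrow z_1\cap\uparrow z_2)$, and applying the c-space condition to this open neighborhood produces $z_3$ with $x\in int_{\tau}(\uparrow z_3)\subseteq\uparrow z_3\subseteq\uparrow z_1\cap\uparrow z_2$, so $z_3\in K$ with $z_1,z_2\leq z_3$. Applying the same condition to an arbitrary open $U\ni x$ produces $z\in K\cap U$, so $K$ meets every neighborhood of $x$; hence $K$ is a directed subset of $\Da_d x$ converging to $x$, which is (3).

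The implication $(1)\Rightarrow(2)$ is where the real work lies, and its core is an interpolation property: in a d-continuous space, $z\ll_d y$ implies $z\ll_d w\ll_d y$ for some $w$. To prove it I would form the two-step set $D^{(2)}=\bigcup\{\Da_d w: w\in\Da_d y\}$ and check, using directedness of each $\Da_d w$ and of $\Da_d y$ together with monotonicity of $\ll_d$, that $D^{(2)}$ is directed and that $D^{(2)}\rightarrow_{\tau}y$; then $z\ll_d y$ forces $z\leq u$ for some $u\in D^{(2)}$, say $u\in\Da_d w$ with $w\in\Da_d y$, and $z\leq u\ll_d w\ll_d y$ delivers the interpolant $w$. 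Interpolation in turn shows that $\Ua_d z$ is monotone determined open: if $z\ll_d p$ and $D\rightarrow_{\tau}p$, interpolate $z\ll_d w\ll_d p$, use $w\ll_d p$ to get $w\leq d$ for some $d\in D$, and conclude $z\ll_d d$. Since $X$ is monotone determined, $\Ua_d z$ is genuinely $\tau$-open, and because $\Ua_d z\subseteq\uparrow z$ we obtain the missing half $z\ll_d y\Rightarrow y\in int_{\tau}(\uparrow z)$. Finally, given $y\in U\in\tau$, d-continuity gives $\Da_d y\rightarrow_{\tau}y$, so $\Da_d y\cap U\neq\emptyset$; choosing $z\in\Da_d y\cap U$ yields $y\in int_{\tau}(\uparrow z)\subseteq\uparrow z\subseteq U$, the c-space condition. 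I expect the interpolation step—verifying carefully that $D^{(2)}$ is directed and $\tau$-convergent—to be the main obstacle, since it is the only place where the \emph{full} d-continuity hypothesis (directedness of every $\Da_d w$, not merely convergence) is genuinely needed.
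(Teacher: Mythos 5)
Your proposal is correct, but note that there is nothing in the paper to compare it against: Theorem \ref{2.5} is a quoted preliminary, cited from \cite{19,25} without any in-paper proof. Judged on its own merits, your cycle $(2)\Rightarrow(3)\Rightarrow(1)\Rightarrow(2)$ is sound. The two easy implications work exactly as you describe, using the fact that for a directed set convergence amounts to meeting every open neighbourhood, and that the c-space condition applied to $int_{\tau}(\uparrow z_{1})\cap int_{\tau}(\uparrow z_{2})=int_{\tau}(\uparrow z_{1}\cap\uparrow z_{2})$ produces upper bounds inside $K=\{z:x\in int_{\tau}(\uparrow z)\}$. The substantive implication $(1)\Rightarrow(2)$ is carried by your interpolation lemma via the two-step set $D^{(2)}$, which then shows $\Ua_{d}z$ is monotone determined open, hence open, giving the missing half $z\ll_{d}y\Rightarrow y\in int_{\tau}(\uparrow z)$; in effect you have re-derived the content of Theorem \ref{2.6} (interpolation, the basis $\{\Ua_{d}x\}$, and the characterization of $\ll_{d}$ by interiors), which the paper likewise only cites. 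This is the standard route in the cited literature, and it is the right one. One small imprecision worth fixing: you record monotonicity of $\ll_{d}$ only in the form $a\leq b\ll_{d}c\Rightarrow a\ll_{d}c$, but in at least two places you invoke the dual form $a\ll_{d}b\leq c\Rightarrow a\ll_{d}c$ (concluding $z\ll_{d}d$ from $z\ll_{d}w\leq d$ in the openness argument, and implicitly when assembling upper bounds in $D^{(2)}$). The dual form is equally immediate --- if $b\leq c$ then every open neighbourhood of $b$ contains $c$, so any directed set converging to $c$ also converges to $b$ --- but since your argument leans on it constantly, it should be stated alongside the other.
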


\begin{thm}\label{2.6} \cite{19,25}
Let $X$ be a d-continuous space. Then we have the following statements.

(1) For all $x, y\in X$, $x\ll_{d} y$ implies $x\ll_{d}z\ll_{d} y$ for some $z\in X$.

(2) $\{\Ua_{d} x: x\in X\}$ is a basis of $\tau$.

(3) For all $x, y\in X$, the following are equivalent:

(i) $x\ll_{d} y$;

(ii) $y\in (\uparrow x)^{\circ}$;

(iii) For any net $(x_{j})_{j\in J}\subseteq X$, $(x_{j})_{j\in J}\rightarrow_{\tau} y$ implies $x\leq x_{j_{0}}$ for some $j_{0}\in J$.
\end{thm}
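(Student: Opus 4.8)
The plan is to prove the three parts in the order (1), then the openness of the sets $\Ua_{d}x$, and finally (3) and (2), since both (2) and the key implication of (3) depend on the interpolation property (1) and on $\Ua_{d}x$ being open. Before starting I would record a few routine monotonicity facts used throughout: $x\ll_{d}y$ forces $x\leq y$ (test with the directed set $\{y\}$, which converges to $y$); open sets are upper sets, so convergence is downward closed in the specialization order, i.e. $D\rightarrow_{\tau}z$ and $y\leq z$ give $D\rightarrow_{\tau}y$; consequently $\ll_{d}$ is monotone on both sides, $a\leq x\ll_{d}y\leq z$ implies $a\ll_{d}z$. I would also note that for a directed set $D$ (viewed as a net indexed by itself) and an open, hence upper, set $U$, one has $D\rightarrow_{\tau}x$ iff $D$ meets every open neighborhood of $x$; this reduces convergence statements to intersection statements.

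For (1), given $x\ll_{d}y$, I would form the auxiliary set $M=\bigcup\{\Da_{d}w : w\in\Da_{d}y\}$. Using d-continuity (each $\Da_{d}w$ and $\Da_{d}y$ is directed) together with the two-sided monotonicity above, I would check that $M$ is directed: two members lying over $w_{1},w_{2}\in\Da_{d}y$ can be pushed above a common $w_{3}\in\Da_{d}y$ and then amalgamated inside the directed set $\Da_{d}w_{3}$. Next I would show $M\rightarrow_{\tau}y$ by the neighborhood criterion: an open $U\ni y$ meets $\Da_{d}y$ at some $w$ because $\Da_{d}y\rightarrow_{\tau}y$; since $w\in U$ and $\Da_{d}w\rightarrow_{\tau}w$, the set $U$ also meets $\Da_{d}w$, and that witness lies in $M\cap U$. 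With $M$ a directed set converging to $y$ and $x\ll_{d}y$, the definition yields $x\leq m$ for some $m\in M$, say $m\in\Da_{d}w$ with $w\in\Da_{d}y$; then $x\leq m\ll_{d}w\ll_{d}y$, and monotonicity gives $x\ll_{d}w\ll_{d}y$, so $z=w$ works.

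Next I would prove that $\Ua_{d}x$ is open. It is clearly an upper set, so since $X$ is monotone determined ($\mathcal{D}(X)=\tau$) it suffices to show it is monotone determined open: given a directed $D\rightarrow_{\tau}v$ with $x\ll_{d}v$, I interpolate by (1) to get $x\ll_{d}z\ll_{d}v$, use $z\ll_{d}v$ and $D\rightarrow_{\tau}v$ to obtain $z\leq d$ for some $d\in D$, and conclude $x\ll_{d}d$, i.e. $d\in D\cap\Ua_{d}x$. Since always $\Ua_{d}x\subseteq\uparrow x$, this gives (i)$\Rightarrow$(ii) of (3): $x\ll_{d}y$ yields $y\in\Ua_{d}x\subseteq\uparrow x$ with $\Ua_{d}x$ open, so $y\in(\uparrow x)^{\circ}$. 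The remaining implications are immediate: (ii)$\Rightarrow$(iii) because $(\uparrow x)^{\circ}$ is an open neighborhood of $y$ into which any net converging to $y$ is eventually driven; and (iii)$\Rightarrow$(i) because directed sets are special nets. Finally for (2): each $\Ua_{d}x$ is open, and given $y\in U\in\tau$, d-continuity gives $\Da_{d}y\rightarrow_{\tau}y$, so some $x\in\Da_{d}y\cap U$; then $y\in\Ua_{d}x$ and, $U$ being upper, $\Ua_{d}x\subseteq\uparrow x\subseteq U$, exhibiting $U$ as a union of basic sets.

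I expect the main obstacle to be the convergence $M\rightarrow_{\tau}y$ in part (1). In a dcpo one simply checks $\sup M=y$, but here there are no suprema, so the argument must go through the two-layer neighborhood chase, which is exactly where the hypothesis that $X$ is monotone determined (so that directed-set convergence is detected by open neighborhoods, open sets being upper) is doing real work. Once (1) and the openness of $\Ua_{d}x$ are secured, the rest is routine bookkeeping with the monotonicity facts.
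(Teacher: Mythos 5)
Your proof is correct. Note first that the paper itself gives no proof of Theorem \ref{2.6}: it is stated in the preliminaries as a known result, quoted from the references \cite{19,25}, so there is no in-paper argument to compare against. Your proposal is a sound, self-contained reconstruction along the standard lines of those sources: the interpolation property via the auxiliary directed set $M=\bigcup\{\Da_{d}w : w\in \Da_{d}y\}$ (the classical Gierz--Hofmann--Keimel--Lawson--Mislove--Scott argument with ``$\sup$'' replaced by topological convergence of directed sets), then openness of $\Ua_{d}x$ from interpolation together with $\mathcal{D}(X)=\tau$, and finally (2) and (3) as routine consequences. All the individual steps check out: the directedness and convergence of $M$, the monotonicity facts, and the three implications in (3). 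One small remark on your closing commentary: the monotone-determined hypothesis is not what makes directed-set convergence equivalent to meeting every open neighborhood --- that holds in any $T_{0}$ space because open sets are upper with respect to the specialization order; where $\mathcal{D}(X)=\tau$ genuinely intervenes is in upgrading $\Ua_{d}x$ from monotone determined open to open, i.e.\ in (2) and in the implication (i)$\Rightarrow$(ii) of (3). This does not affect the validity of the proof, only the attribution in your final paragraph.
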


Let $X$ be a $T_{0}$ topological space and $\mathcal{P}^{w}(X)$ be the family of all nonempty finite subsets of $X$. Given any two subsets $G, H$ of $X$, we define $G\leq H$ iff $H\subseteq \uparrow G$. A family of finite sets $\mathcal{F}\subseteq\mathcal{P}^{w}(X)$ is said to be directed if given $F_{1}, F_{2}$ in the family, there exists $F\in \mathcal{F}$ such that $F\subseteq \uparrow F_{1}\cap \uparrow F_{2}$. Let $\mathcal{F}\subseteq \mathcal{P}^{w}(X)$ be a directed family. We say that $\mathcal{F}\rightarrow_{\tau}x$ if for any open neighbourhood $U$ of $x$, there exists some $F\in \mathcal{F}$ such that $F\subseteq U$.

\begin{defn}\label{2.7}\cite{21,27}
 Let $X$ be a monotone determined space and $G, H\subseteq X$. We say that $G$ d-approximates $H$, denoted by $G\ll_{d} H$, if for any directed subset $D$ of $X$, $D\rightarrow_{\tau} h$ for some $h\in H$ implies $D\cap \uparrow G\neq \emptyset$.
\end{defn}

We
write $G\ll_{d} x$ for $G\ll_{d} \{x\}$. $G$ is said to be compact if $G\ll_{d} G$. For any $T_{0}$ topological space $X$ and $F\subseteq X$, we denote $\Uparrow_{d} F=\{x\in X: F\ll_{d} x\}$.

\begin{defn}\label{2.8} \cite{21,27} A topological space $X$ is called d-quasicontinuous if it is a monotone determined space such that
for any $x\in X$, the family $fin_{d}(x)=\{F: F\in \mathcal{P}^{w}(X), F\ll_{d} x\}$ is a directed family and converges to $x$.
\end{defn}

\begin{thm}\label{2.9} \cite{21,27} Let $X$ be a
d-quasicontinuous space. The following statements hold.

(1) Given any $H\in \mathcal{P}^{w}(X)$ and $y\in X$, $H\ll_{d}y$ implies $H\ll_{d}F\ll_{d}y$ for some finite subset $F\in \mathcal{P}^{w}(X)$.

(2) Given any $F\in \mathcal{P}^{w}(X)$, $\Uparrow_{d} F =(\uparrow F)^{\circ}$. Moreover, $\{\Uparrow_{d} F : F\in \mathcal{P}^{w}(X)\}$ is a base of $\tau$.
\end{thm}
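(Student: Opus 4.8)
The plan is to treat the interpolation statement (1) as the heart of the matter and to derive (2) from it. Throughout I use two elementary reformulations. First, for a finite $F$ and a point $z$ one has $F\ll_{d}z$ iff $z\in\Uparrow_{d}F$, and $F\ll_{d}y$ is exactly the statement $F\in fin_{d}(y)$. Second, for finite sets $H,F$ one checks directly from the definition that $H\ll_{d}F$ holds iff $F\subseteq\Uparrow_{d}H$: if each $f\in F$ satisfies $H\ll_{d}f$ then any directed $D\rightarrow_{\tau}f$ meets $\uparrow H$, and conversely applying $H\ll_{d}F$ to directed sets converging to a fixed $f\in F$ yields $H\ll_{d}f$ for every $f\in F$. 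Thus statement (1) asks for a finite $F\in fin_{d}(y)$ with $F\subseteq\Uparrow_{d}H$. I will also use repeatedly that convergence is downward closed for the specialization order (if $D\rightarrow_{\tau}z$ and $w\leq z$ then $D\rightarrow_{\tau}w$, since open sets are upper sets), and that the singleton $\{z\}\rightarrow_{\tau}z$, which immediately gives $\Uparrow_{d}F\subseteq\uparrow F$.

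For (1) I would argue by contradiction using a Rudin-type selection. Suppose no finite interpolant exists. Then for every $F\in fin_{d}(y)$ we have $H\not\ll_{d}F$, so by the reformulation there is a point $x_{F}\in F$ with $x_{F}\notin\Uparrow_{d}H$; unwinding $H\not\ll_{d}x_{F}$ produces a directed set $D_{F}\subseteq X\setminus\uparrow H$ with $D_{F}\rightarrow_{\tau}x_{F}$. The family $fin_{d}(y)$ is directed and converges to $y$ by d-quasicontinuity, and every $D_{F}$ lies in the lower set $X\setminus\uparrow H$. The goal is to assemble these data into a single directed subset $C\subseteq X\setminus\uparrow H$ with $C\rightarrow_{\tau}y$; such a $C$ contradicts $H\ll_{d}y$, being a directed set converging to $y$ that misses $\uparrow H$. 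The assembly is carried out through the topological form of Rudin's Lemma: one organises the finite members of $fin_{d}(y)$, refined by the approximating sets $D_{F}$, into a directed family of finite subsets of $X\setminus\uparrow H$ that still converges to $y$, and Rudin's Lemma then extracts the required directed set from the relevant closed part of $X\setminus\uparrow H$.

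Granting (1), statement (2) is routine. The inclusion $\Uparrow_{d}F\subseteq\uparrow F$ is the singleton observation above, and $\Uparrow_{d}F$ is an upper set by downward closedness of convergence. To see $\Uparrow_{d}F$ is open it suffices, since $X$ is monotone determined, to check it is monotone determined open: given $(D,z)\in D(X)$ with $F\ll_{d}z$, interpolation gives $F\ll_{d}F'\ll_{d}z$; then $F'\ll_{d}z$ and $D\rightarrow_{\tau}z$ force some $d\in D$ with $d\in\uparrow F'$, and for such $d$ one gets $F\ll_{d}d$ (any directed $E\rightarrow_{\tau}d$ also converges to the element of $F'$ lying below $d$, hence meets $\uparrow F$ because $F\ll_{d}F'$), so $D$ meets $\Uparrow_{d}F$. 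Conversely, if $z\in(\uparrow F)^{\circ}$ then the open neighbourhood $(\uparrow F)^{\circ}$ of $z$ is contained in $\uparrow F$, so every directed set converging to $z$ eventually enters it and meets $\uparrow F$; hence $F\ll_{d}z$. Together with openness this gives $\Uparrow_{d}F=(\uparrow F)^{\circ}$. Finally, for $z\in U\in\tau$, d-quasicontinuity provides $F\in fin_{d}(z)$ with $F\subseteq U$, and then $z\in\Uparrow_{d}F=(\uparrow F)^{\circ}\subseteq\uparrow F\subseteq U$, so $\{\Uparrow_{d}F:F\in\mathcal{P}^{w}(X)\}$ is a base of $\tau$.

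The main obstacle is the assembly step in (1). The individual $D_{F}$ are directed sets scattered below the various $x_{F}$, and their union is neither directed nor obviously convergent to $y$; turning them into one directed set that simultaneously converges to $y$ and stays out of $\uparrow H$ is precisely the content of Rudin's Lemma, and it requires care both in choosing the directed family of finite sets fed to it and in verifying that the extracted directed set still converges to $y$. A secondary point to watch is the apparent circularity between interpolation and the openness of $\Uparrow_{d}F$: the argument is sound only because (1) is established first by the Rudin argument, which does not presuppose that $\Uparrow_{d}F$ is open, with openness deduced afterwards in (2).
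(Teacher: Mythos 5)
Your preparatory reductions are sound: the equivalence $H\ll_{d}F \Leftrightarrow F\subseteq \Uparrow_{d}H$, the downward closedness of directed convergence under the specialization order, and the derivation of (2) from (1) (monotone determined openness of $\Uparrow_{d}F$ via interpolation, the converse inclusion $(\uparrow F)^{\circ}\subseteq \Uparrow_{d}F$, and the base property) are all correct, and you rightly flag that (1) must be proved without presupposing that $\Uparrow_{d}H$ is open. (Note the paper itself gives no proof of Theorem \ref{2.9}; it is quoted from \cite{21,27}, so the comparison is with the standard argument in that literature.)

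However, your proof of (1) has a genuine gap exactly at the step you call the assembly, and it is not merely a matter of care: the assembly as described cannot be carried out. The two natural implementations both fail. If you feed Rudin's Lemma the directed family $\{F\setminus \Uparrow_{d}H : F\in fin_{d}(y)\}$, you extract a directed set converging to $y$ that avoids $\Uparrow_{d}H$; but $H\ll_{d}y$ only forces such a set to meet $\uparrow H$, and avoiding $\Uparrow_{d}H$ is perfectly compatible with meeting $\uparrow H$, so no contradiction results. If instead you build finite sets by selecting elements from your witness sets $D_{F}$ (which do avoid $\uparrow H$), the resulting family is not directed in the Smyth preorder: for bad points $x\leq x''$ there is no reason that an element of $D_{x''}$ lies above a previously chosen element of $D_{x}$, because the $D$'s are arbitrary witnesses of $H\not\ll_{d}(\cdot)$ and do not transfer up the order. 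The missing idea is to refine $fin_{d}(y)$ not by these ad hoc witnesses but by the canonical approximating families: form the two-layer family $\mathcal{G}=\{\bigcup_{x\in F}\phi(x) : F\in fin_{d}(y),\ \phi(x)\in fin_{d}(x)\ \mathrm{for\ each}\ x\in F\}$. This family is directed (if $x\leq x''$ then $\phi(x)\ll_{d}x''$, so the directedness of $fin_{d}(x'')$ produces a common refinement) and converges to $y$; Lemma \ref{2.12} of the paper, which packages precisely the Rudin argument, applied to $H\ll_{d}y$ and $\mathcal{G}$ yields some $F_{\phi}=\bigcup_{x\in F}\phi(x)\subseteq \uparrow H$, and the corresponding $F\in fin_{d}(y)$ then satisfies $H\ll_{d}F\ll_{d}y$, since any directed set converging to some $x\in F$ meets $\uparrow\phi(x)\subseteq\uparrow H$. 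Without this (or an equivalent) construction your argument for (1) does not close; note also that trying to obtain the assembly from weak one-step closure (Proposition \ref{4.3}) would be circular, since that proposition is proved using Theorem \ref{2.9}.
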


\begin{defn}\label{2.10} \cite{27} A $T_{0}$ topological space $X$ is called locally hypercompact, if for any open subsets $U$ of $X$ and $x\in U$, there exists some $F\in \mathcal{P}^{w}(X)$ such that $x\in (\uparrow F)^{\circ} \subseteq \uparrow F\subseteq U$.
\end{defn}

\begin{thm}\label{2.11} \cite{21,27}
 Let $X$ be a monotone determined space, Then the following three conditions are equivalent to each other.

(1) $X$ is a d-quasicontinuous space;

(2) $X$ is a locally hypercompact space;

(3) There exists a directed family $\mathcal{F }\subseteq fin(x)$ such that $\mathcal{F}\rightarrow_{\tau} x$ for any $x\in X$.
\end{thm}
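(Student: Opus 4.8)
The plan is to prove the three conditions equivalent by the cycle $(1)\Rightarrow(3)\Rightarrow(2)\Rightarrow(1)$. The implication $(1)\Rightarrow(3)$ is immediate: $\mathcal{F}=fin_{d}(x)$ works by Definition~\ref{2.8}. Before the substantial implications I would record three elementary facts valid in any monotone determined space $X$ for a finite $F\in\mathcal{P}^{w}(X)$: (i) $\Uparrow_{d}F\subseteq\uparrow F$, by testing $F\ll_{d}x$ against the directed set $\{x\}$ (note $\{x\}\rightarrow_{\tau}x$); (ii) $(\uparrow F)^{\circ}\subseteq\Uparrow_{d}F$, since any directed $D\rightarrow_{\tau}x$ with $x\in(\uparrow F)^{\circ}$ is eventually in the open set $(\uparrow F)^{\circ}\subseteq\uparrow F$; and (iii) the reformulation that $F\ll_{d}x$ holds iff every directed $D\rightarrow_{\tau}x$ is \emph{eventually} in $\uparrow F$ (the upgrade from $D\cap\uparrow F\neq\emptyset$ uses that $\uparrow F$ is upper and $D$ is directed). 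The whole theorem then hinges on one Key Lemma: in these spaces $\Uparrow_{d}F=(\uparrow F)^{\circ}$, equivalently $\Uparrow_{d}F$ is open.

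For $(2)\Rightarrow(1)$ I would first prove the Key Lemma under local hypercompactness. Fix $x$ and set $\mathcal{H}=\{H\in\mathcal{P}^{w}(X): x\in(\uparrow H)^{\circ}\}$; local hypercompactness makes $\mathcal{H}$ a directed family with $\mathcal{H}\rightarrow_{\tau}x$, and $\mathcal{H}\subseteq fin_{d}(x)$ by fact (ii). Suppose $F\ll_{d}x$ but $x\notin(\uparrow F)^{\circ}$. Then for every $H\in\mathcal{H}$ we have $\uparrow H\not\subseteq\uparrow F$ (otherwise $x\in(\uparrow H)^{\circ}\subseteq(\uparrow F)^{\circ}$), so $H^{\ast}:=\{h\in H:\uparrow h\not\subseteq\uparrow F\}$ is a nonempty finite subset of $X\setminus\uparrow F$, and $\{H^{\ast}:H\in\mathcal{H}\}$ is again directed. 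By Rudin's Lemma there is a directed set $D\subseteq\bigcup_{H}H^{\ast}\subseteq X\setminus\uparrow F$ meeting $\uparrow H$ for every $H\in\mathcal{H}$; combining $\mathcal{H}\rightarrow_{\tau}x$ with the directedness of $D$ one checks $D\rightarrow_{\tau}x$. But then $F\ll_{d}x$ forces $D\cap\uparrow F\neq\emptyset$, contradicting $D\subseteq X\setminus\uparrow F$. Hence $\Uparrow_{d}F=(\uparrow F)^{\circ}$. With the Key Lemma, convergence of $fin_{d}(x)$ is read off Definition~\ref{2.10}, and directedness follows since $F_{1},F_{2}\in fin_{d}(x)$ give $x\in(\uparrow F_{1})^{\circ}\cap(\uparrow F_{2})^{\circ}$, to which local hypercompactness supplies a finite $G$ with $x\in(\uparrow G)^{\circ}\subseteq\uparrow G\subseteq\uparrow F_{1}\cap\uparrow F_{2}$; thus $X$ is d-quasicontinuous.

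For $(3)\Rightarrow(2)$ I would, given open $U$ with $x\in U$, use the directed family $\mathcal{F}\subseteq fin_{d}(x)$ to pick $F\in\mathcal{F}$ with $F\subseteq U$ (so $\uparrow F\subseteq U$ and $F\ll_{d}x$) and then try to rerun the Rudin construction on $\mathcal{F}$ to obtain $x\in(\uparrow F)^{\circ}$; this would finish by fact (i), since then $x\in\Uparrow_{d}F=(\uparrow F)^{\circ}\subseteq\uparrow F\subseteq U$.

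The hard part is exactly here. In the $(2)$ argument the escape $\uparrow H\not\subseteq\uparrow F$ was free because the members of $\mathcal{H}$ are witnessed by \emph{interiors}; with only an abstract approximating family $\mathcal{F}$ as in $(3)$, a member $H\in\mathcal{F}$ may satisfy $\uparrow H\subseteq\uparrow F$, so the sets $H^{\ast}$ can be empty and Rudin's Lemma no longer applies verbatim. The main obstacle is therefore to guarantee that, when $x\notin(\uparrow F)^{\circ}$, the subfamily $\{H\in\mathcal{F}:\uparrow H\not\subseteq\uparrow F\}$ is still cofinal (hence directed and convergent to $x$), so that Rudin's Lemma produces a directed set in $X\setminus\uparrow F$ converging to $x$ and contradicting $F\ll_{d}x$. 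I expect this to be the delicate step: it is where the hypothesis $\mathcal{D}(X)=\tau$ must be exploited to turn the family-level convergence $\mathcal{F}\rightarrow_{\tau}x$ into genuine directed-set convergence avoiding $\uparrow F$ (the same net-to-directed-set passage that the paper's notion of one-step closure is designed to control), and ruling out the complementary non-cofinal case is what I anticipate will demand the most care.
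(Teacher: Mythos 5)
First, a point of reference: Theorem \ref{2.11} is a cited preliminary (from \cite{21,27}); the paper itself contains no proof of it, so your proposal must stand on its own merits.

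Your $(1)\Rightarrow(3)$ is indeed immediate, and your $(2)\Rightarrow(1)$ is correct and complete: the three elementary facts are right, the family $\mathcal{H}=\{H\in\mathcal{P}^{w}(X):x\in(\uparrow H)^{\circ}\}$ is directed and converges to $x$ under local hypercompactness, the escaping parts $H^{\ast}$ form a directed family of nonempty finite sets when $x\notin(\uparrow F)^{\circ}$, and Rudin's Lemma then produces the contradicting directed set, yielding $\Uparrow_{d}F=(\uparrow F)^{\circ}$ and hence d-quasicontinuity. The genuine gap is $(3)\Rightarrow(2)$: you describe the obstacle but do not overcome it, so the cycle is broken and the theorem is not proved. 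Moreover, the repair you anticipate --- showing that when $x\notin(\uparrow F)^{\circ}$ the subfamily $\{H\in\mathcal{F}:\uparrow H\not\subseteq\uparrow F\}$ is cofinal in $\mathcal{F}$ --- provably cannot work. Your $F$ is chosen from $\mathcal{F}$, and cofinality of the escaping subfamily would require some escaping $H$ with $H\subseteq\uparrow F$; but $H\subseteq\uparrow F$ forces $\uparrow H\subseteq\uparrow F$, i.e.\ $H$ does not escape. In fact, by directedness of $\mathcal{F}$, for every $H'\in\mathcal{F}$ there is $H\in\mathcal{F}$ with $H\subseteq\uparrow H'\cap\uparrow F$, so the \emph{non-escaping} members are cofinal. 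This route is not merely delicate; it is dead.

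The missing idea is interpolation, and the paper supplies the tool for it in Lemma \ref{2.12}. Given hypothesis (3), with a witnessing family $\mathcal{F}_{y}$ for each $y\in X$, form the composite finite sets $\bigcup_{y\in F'}G_{y}$ where $F'\in\mathcal{F}_{x}$ and $G_{y}\in\mathcal{F}_{y}$ for each $y\in F'$. Each such set satisfies $\bigcup_{y\in F'}G_{y}\ll_{d}F'$, and the collection of all of them is a directed family converging to $x$ (directedness of this collection is itself an application of Lemma \ref{2.12} inside each $\mathcal{F}_{z}$, using that $G\ll_{d}y\leq z$ implies $G\ll_{d}z$). Applying Lemma \ref{2.12} once more to this composite family shows: $G\ll_{d}x$ implies $G\ll_{d}F'\ll_{d}x$ for some finite $F'$. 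With interpolation in hand one checks that $\Uparrow_{d}G$ is monotone determined open --- this is exactly where the hypothesis $\mathcal{D}(X)=\tau$ enters, the role you correctly guessed it must play --- whence, for $x\in U\in\tau$, picking $F\in\mathcal{F}_{x}$ with $F\subseteq U$ gives $x\in\Uparrow_{d}F=(\uparrow F)^{\circ}\subseteq\uparrow F\subseteq U$, which is (2). Until this (or an equivalent) argument is supplied, your proposal establishes only $(2)\Rightarrow(1)\Rightarrow(3)$ and not the equivalence.
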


\begin{lem}\label{2.12} \cite{21}
 Let $X$ be a $T_{0}$ topological space, $\mathcal{F}\subseteq \mathcal{P}^{w}(X)$ be directed family, $G, H\in \mathcal{P}^{w}(X)$, $x\in X$. If $G\ll _{d} H$ and $\mathcal{F}\rightarrow_{\tau} x\in H$, then there exists $F\in \mathcal{F}$ such that $F\subseteq \uparrow G$.
\end{lem}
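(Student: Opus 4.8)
The plan is to argue by contradiction and reduce the assertion about the directed \emph{family} $\mathcal{F}$ to the defining property of $G\ll_{d}H$, which (Definition~\ref{2.7}) speaks only of directed \emph{subsets} (nets). The bridge between directed families of finite sets and directed nets is Rudin's Lemma, so the whole argument is organized around manufacturing, from a hypothetically uncooperative $\mathcal{F}$, a single directed set that converges to $x$ yet entirely misses $\uparrow G$, thereby violating $G\ll_{d}H$ at the point $x\in H$.

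Concretely, I would suppose toward a contradiction that no member of $\mathcal{F}$ is contained in $\uparrow G$, and then pass to the trimmed family $\mathcal{F}'=\{F\setminus\uparrow G : F\in\mathcal{F}\}$. Each set $F\setminus\uparrow G$ is nonempty precisely by the contradiction hypothesis, and is finite, so $\mathcal{F}'\subseteq\mathcal{P}^{w}(X)$. The crucial point, which I expect to be the main obstacle to establish cleanly, is that $\mathcal{F}'$ is again a directed family: since $\uparrow G$ is an upper set, $X\setminus\uparrow G$ is a lower set, and if $F_{3}\subseteq\uparrow F_{1}\cap\uparrow F_{2}$ then any $z\in F_{3}\setminus\uparrow G$ lies above some $f_{i}\in F_{i}$ which must itself avoid $\uparrow G$ (otherwise $f_{i}\leq z$ would force $z\in\uparrow G$). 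Hence $f_{i}\in F_{i}\setminus\uparrow G$ and $F_{3}\setminus\uparrow G\subseteq\uparrow(F_{1}\setminus\uparrow G)\cap\uparrow(F_{2}\setminus\uparrow G)$; this is exactly where one exploits that intersecting with a lower set is compatible with the order $\leq$ on finite sets.

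Next I would invoke Rudin's Lemma for the poset $(X,\leq)$ given by the specialization order, applied to the directed family $\mathcal{F}'$, to obtain a directed set $D\subseteq\bigcup\mathcal{F}'$ that meets every member of $\mathcal{F}'$. Since $D\subseteq X\setminus\uparrow G$, we immediately get $D\cap\uparrow G=\emptyset$. It then remains to verify $D\rightarrow_{\tau}x$: given an open neighbourhood $U$ of $x$, the convergence $\mathcal{F}\rightarrow_{\tau}x$ supplies $F_{0}\in\mathcal{F}$ with $F_{0}\subseteq U$; because open sets are upper sets for the specialization order, $\uparrow F_{0}\subseteq U$, and since $D$ meets $F_{0}\setminus\uparrow G\in\mathcal{F}'$ we obtain a point $d_{0}\in D\cap U$. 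Directedness of $D$ together with $U$ being upper forces the tail $\{d\in D : d\geq d_{0}\}$ into $U$, so $D$ is eventually in $U$, giving $D\rightarrow_{\tau}x$. Finally, as $x\in H$, the directed set $D$ satisfies $D\rightarrow_{\tau}x$ with $x\in H$ while $D\cap\uparrow G=\emptyset$, contradicting $G\ll_{d}H$. This contradiction shows that some $F\in\mathcal{F}$ must satisfy $F\subseteq\uparrow G$, as required.
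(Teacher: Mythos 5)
Your proof is correct. There is, however, nothing in the paper to compare it against: Lemma~\ref{2.12} is quoted from \cite{21} without proof, so the paper supplies no argument of its own for this statement. Your route --- assume no $F\in\mathcal{F}$ lies in $\uparrow G$, trim to $\mathcal{F}'=\{F\setminus\uparrow G: F\in\mathcal{F}\}$, check Smyth-directedness using that $\uparrow G$ is an upper set, apply Rudin's Lemma to get a directed $D\subseteq X\setminus\uparrow G$ meeting every trimmed set, and observe that $D\rightarrow_{\tau}x$ (any open $U\ni x$ contains some $F_{0}\in\mathcal{F}$, hence some $d_{0}\in D\cap(F_{0}\setminus\uparrow G)\subseteq U$, and openness of $U$ for the specialization order makes the tail above $d_{0}$ stay in $U$), contradicting $G\ll_{d}H$ at $x\in H$ --- is exactly the standard argument for statements of this type, and it is also the same device the paper itself deploys later (Proposition~\ref{4.3} applies Rudin's Lemma to the family $(F\cap\downarrow A)_{F\in fin_{d}(x)}$ in just this way). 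All the delicate points are handled: nonemptiness of the trimmed sets comes precisely from the contradiction hypothesis, directedness of $\mathcal{F}'$ is verified rather than assumed, and the convergence of $D$ as a net indexed by itself is justified via upper-ness of open sets.
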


Continuous dcpos and quasicontinuous dcpos endowed with the Scott topology can be viewed as special d-continuous spaces and d-quasicontinuous spaces.

\begin{defn}\label{2.13} \cite{21}
 A topological space $X$ is called d-meet continuous if it is a monotone determined space such that
for any $x\in X$ and directed subset $D$ of $X$, $D\rightarrow_{\tau}x$ implies $x\in cl(\downarrow D\cap \downarrow x)$.
\end{defn}

\begin{lem} \label{2.14}
Let $X$ be a monotone determined space, then $X$ is d-meet continuous iff for any $F\in \mathcal{P}^{w}(X)$, $U\in \tau$, $\uparrow (U\cap \downarrow F)\in \tau$.
\end{lem}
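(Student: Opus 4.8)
The plan is to prove the two implications separately, using throughout that since $X$ is monotone determined we have $\mathcal{D}(X)=\tau$; hence whenever a set must be shown open it suffices to check it is monotone determined open, i.e.\ that it meets every directed set converging into it. For the easy direction, assume the openness condition and let me verify d-meet continuity. Fix a directed $D$ with $D\rightarrow_{\tau}x$ and an arbitrary open $U\ni x$; it is enough to show $U$ meets $\downarrow D\cap\downarrow x$, since letting $U$ range over all neighbourhoods of $x$ then yields $x\in cl(\downarrow D\cap\downarrow x)$. Applying the hypothesis to the singleton $F=\{x\}$, the set $\uparrow(U\cap\downarrow x)$ is open, and it contains $x$ because $x\in U\cap\downarrow x$. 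As $D\rightarrow_{\tau}x$, convergence produces some $d\in D$ lying in $\uparrow(U\cap\downarrow x)$, so there is $b\in U\cap\downarrow x$ with $b\le d$. Then $b\in U$, $b\le x$ and $b\le d$ give $b\in U\cap\downarrow D\cap\downarrow x$, as required.

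For the converse, assume $X$ is d-meet continuous, fix $F\in\mathcal{P}^{w}(X)$ and $U\in\tau$, and set $W:=\uparrow(U\cap\downarrow F)$. Since $W$ is an up-set, to show $W\in\mathcal{D}(X)=\tau$ I only need: for every directed $D$ with $D\rightarrow_{\tau}y$ and $y\in W$, the set $D$ meets $W$. Choose $a\in U\cap\downarrow F$ with $a\le y$, so $a\in U$ and $a\le f$ for some $f\in F$. The naive move would be to feed the convergence $D\rightarrow_{\tau}y$ directly into d-meet continuity, but the definition only supplies approximating points lying below the limit $y$, whereas membership in $W$ requires a point below some element of $F$. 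This mismatch between $\downarrow y$ and $\downarrow F$ is the main obstacle.

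I would resolve it by re-pointing the convergence to $a$. Since $a\le y$ we have $a\in cl_{\tau}\{y\}$, so every open set containing $a$ also contains $y$; as $D\rightarrow_{\tau}y$, it follows that $D$ is eventually in each such neighbourhood, that is, $D\rightarrow_{\tau}a$. Now apply d-meet continuity to $D\rightarrow_{\tau}a$ to obtain $a\in cl(\downarrow D\cap\downarrow a)$. Because $a\in U$ and $U$ is open, $U$ meets $\downarrow D\cap\downarrow a$, yielding $c\in U$ with $c\le a$ and $c\le d$ for some $d\in D$. Then $c\le a\le f$ places $c$ in $U\cap\downarrow F$, and $c\le d$ gives $d\in W$, so $D\cap W\neq\emptyset$. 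Monotone determinedness upgrades this to $W\in\tau$, finishing the proof. (One can alternatively reduce to singletons at the outset via $\uparrow(U\cap\downarrow F)=\bigcup_{f\in F}\uparrow(U\cap\downarrow f)$, a finite union of open sets, but the argument above handles a general finite $F$ directly.)
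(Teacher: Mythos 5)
Your proposal is correct, but it takes a genuinely different route from the paper. The paper's proof is a two-line reduction: it \emph{cites} Theorem 4.6 of reference [21], which already states that a monotone determined space is d-meet continuous iff $\uparrow(U\cap\downarrow x)\in\tau$ for every point $x$ and open $U$, and then handles finite $F$ purely formally via the decomposition $\uparrow(U\cap\downarrow F)=\bigcup_{f\in F}\uparrow(U\cap\downarrow f)$, a finite union of open sets; the converse direction is just the specialization $F=\{x\}$ (again falling back on the cited theorem). You instead prove both implications from scratch, directly from Definition \ref{2.13} and the monotone determined property: in effect you supply a proof of the very equivalence the paper outsources. Your key contribution is the re-pointing argument in the hard direction -- given $y\in\uparrow(U\cap\downarrow F)$ and $D\rightarrow_{\tau}y$, you pick $a\in U\cap\downarrow F$ with $a\le y$, observe that convergence descends along the specialization order so that $D\rightarrow_{\tau}a$, and then apply d-meet continuity \emph{at $a$} rather than at $y$; this correctly circumvents the mismatch you identify between $\downarrow y$ and $\downarrow F$, and it handles arbitrary finite $F$ without first reducing to singletons (a reduction you note is also available, and which is exactly what the paper does). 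The trade-off: the paper's argument is shorter but its mathematical content lives in [21]; yours is self-contained, makes the lemma independent of that reference, and incidentally avoids the typographical slips in the paper's union computation.
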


\begin{proof} If Let $X$ be monotone determined space. By Theorem 4.6 in \cite{21}, $X$ is d-meet continuous iff $\uparrow (U\cap \downarrow x)\in \tau$ for any $F\in \mathcal{P}^{w}(X)$, $U\in \tau$.
For any $F\in \mathcal{P}^{w}(X)$, $\uparrow (U\cap \downarrow F)=\uparrow (U\cap \bigcup_{f\in F}\downarrow f)=\uparrow\bigcup_{f\in F}(U\cap \downarrow x)=\bigcup_{f\in F}(\uparrow U\cap \downarrow x)\in \tau$.

Conversely, let $F=\{x\}$, then $\uparrow (U\cap \downarrow x)\in \tau$.
\end{proof}
A family $\mathcal{I}$ of subsets of a non-empty set $M$ is called an ideal if $\mathcal{I}$ has the following properties: (1) If $A\in \mathcal{I}$ and $B\subseteq A$, then $B\in \mathcal{I}$; (2) If $A, B\in \mathcal{I}$, then $A\cup B\in \mathcal{I}$. The ideal $\mathcal{I}$ is called non-trivial or proper if $M\notin \mathcal{I}$.

Let $D$ be a directed set. For all $d\in D$ we set $M_{d}= \{d'\in D: d\leq d'\}$. An ideal $\mathcal{I}$ of a directed set $D$ is
called D-admissible, if $D\backslash M_{d}\in \mathcal{I}$, for any $d\in D$. Then, $\mathcal{I}_{0}=\mathcal{I}_{0}(D)=\{A\subseteq D : A\subseteq D\backslash M_{d}$ for some $d\in D\}$ is a non-trivial D-admissible ideal of $D$\cite{22,23}. Let $F\in \mathcal{P}^{w}(D)$ and $M_{F}= \{d'\in D: d'\in \uparrow F\}$, then $M_{f}\subseteq M_{F}$ hold for any $f\in F$ and $D\backslash M_{F}\subseteq D\backslash M_{f}\in \mathcal{I}_{0}$, i.e., $D\backslash M_{F}\in \mathcal{I}_{0}$.

\begin{lem} \label{2.15}
Let $X$ be a $T_{0}$ topological space, $(x_{j})_{j\in J}\subseteq X$ be a net and $A\subseteq X$. If $\{j\in J: x_{j}\notin \uparrow A\}\in \mathcal{I}_{0}$, then $(x_{j})_{j\in J}$ is eventually in $\uparrow A$.
\end{lem}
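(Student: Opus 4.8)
The plan is to unwind the definition of the admissible ideal $\mathcal{I}_{0}(J)$ directly; no topology is needed, and the whole statement is a purely order-theoretic consequence of admissibility. Write $S=\{j\in J: x_{j}\notin \uparrow A\}$ for the index set appearing in the hypothesis. First I would invoke the description of $\mathcal{I}_{0}(J)$ recalled just above the statement, namely that $B\in \mathcal{I}_{0}(J)$ iff $B\subseteq J\backslash M_{j_{0}}$ for some $j_{0}\in J$. Applying this to $S\in \mathcal{I}_{0}(J)$ yields an index $j_{0}\in J$ with $S\subseteq J\backslash M_{j_{0}}$.

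Next I would take complements within $J$. Since $M_{j_{0}}=\{j\in J: j_{0}\leq j\}$, the containment $S\subseteq J\backslash M_{j_{0}}$ says precisely that $M_{j_{0}}\cap S=\emptyset$, i.e., every $j\geq j_{0}$ lies outside $S$. By the definition of $S$, this means $x_{j}\in \uparrow A$ whenever $j_{0}\leq j$.

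Finally I would read off the conclusion: the statement ``$x_{j}\in \uparrow A$ for all $j\geq j_{0}$'' is exactly the assertion that $(x_{j})_{j\in J}$ is eventually in $\uparrow A$, with $j_{0}$ serving as the witnessing threshold. The only point demanding care --- and it is the closest thing to an obstacle in so short an argument --- is keeping the direction of the set-complement straight: one must pass from ``$S$ sits inside the non-upward part $J\backslash M_{j_{0}}$'' to ``the upward part $M_{j_{0}}$ avoids $S$'', rather than inadvertently reversing the two roles. Neither the $T_{0}$ axiom nor any property of $A$ beyond its appearance in $\uparrow A$ is actually invoked.
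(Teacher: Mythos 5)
Your proof is correct and follows exactly the same route as the paper's: unwind the definition of $\mathcal{I}_{0}(J)$ to obtain $j_{0}$ with $\{j\in J: x_{j}\notin \uparrow A\}\subseteq J\backslash M_{j_{0}}$, pass to the complement to get $M_{j_{0}}\subseteq \{j\in J: x_{j}\in \uparrow A\}$, and read off eventuality with $j_{0}$ as the threshold. No discrepancies to report.
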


\begin{proof}If $\{j\in J: x_{j}\notin \uparrow A\}\in \mathcal{I}_{0}(J)$, then $\{j\in J: x_{j}\notin \uparrow A\}\subseteq J\backslash M_{j_{0}}$ for some $j_{0}\in J$.  So, $M_{j_{0}}\subseteq \{j\in J: x_{j}\in \uparrow A\}$. Therefore, if $j\geq j_{0}$, then $x_{j}\in \uparrow A$, that is, $(x_{j})_{j\in J}$ is eventually in $\uparrow A$.
\end{proof}

\begin{defn}\label{2.16} \cite{22,23}
Let $X$ be a topological space. A net $(x_{j})_{j\in J}\subseteq X$ is said to $\mathcal{I}$-converge
to a point $x\in X$, where $\mathcal{I}$ is an ideal of $J$, if for every open neighborhood $U$ of $x$, $\{j\in J: x_{j}\notin U\}\in \mathcal{I}$. In this case the point $x$ is called the $\mathcal{I}$-limit of the net $(x_{j})_{j\in J}$ and we write $(x_{j})_{j\in J}\rightarrow_{\mathcal{I}}x$.
\end{defn}

Let X be a topological space, $(x_{j})_{j\in J}\subseteq X$ be a net and $x\in X$. Then $(x_{j})_{j\in J}\rightarrow _{\tau}x$ iff $(x_{j})_{j\in J}\rightarrow_{\mathcal{I}_{0}}x$ \cite{24}.

\begin{defn}\label{2.17} \cite{16,25} Suppose $X, Y$ are two $T_{0}$ topological spaces. A function $f : X\rightarrow Y$ is called monotone determined continuous if it is monotone and preserves all limits of directed subset of $X$; that is, $(D, x)\in D(X)\Rightarrow (f(D), f(x))\in D(Y)$.
\end{defn}
Here are some characterizations of the directed continuous functions.

\begin{prop}\label{2.18} \cite{16,25}  Suppose $X, Y$ are two $T_{0}$ topological spaces. $f : X\rightarrow Y$ is a function between $X$
and $Y$. Then

(1) $f$ is monotone determined continuous if and only if $\forall U \in d(Y)$, $f^{-1}(U)\in d(X)$.

(2) If $X, Y$ are monotone determined spaces, then $f$ is continuous if and only if it is monotone determined continuous.
\end{prop}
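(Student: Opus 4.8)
The plan is to prove part (1) directly from the definitions and to obtain part (2) as an immediate corollary. Throughout I read $d(X)$ and $d(Y)$ as the collections $\mathcal{D}(X)$ and $\mathcal{D}(Y)$ of monotone determined open sets. The two structural facts I expect to use repeatedly are Remark \ref{2.2}(3), that every monotone determined open set is an upper set, and Remark \ref{2.2}(5), that for a directed subset $\tau$-convergence and $\mathcal{D}$-convergence coincide.

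For the forward implication of (1), I would assume $f$ is monotone determined continuous and fix $U\in\mathcal{D}(Y)$; to show $f^{-1}(U)\in\mathcal{D}(X)$ it suffices to take an arbitrary $(D,x)\in D(X)$ with $x\in f^{-1}(U)$ and check that $D$ meets $f^{-1}(U)$. Since $f$ preserves directed limits we have $(f(D),f(x))\in D(Y)$; as $U\in\mathcal{D}(Y)$ and $f(x)\in U$, the defining property of monotone determined open sets forces $f(D)\cap U\neq\emptyset$, so some $d\in D$ satisfies $f(d)\in U$, giving $d\in D\cap f^{-1}(U)$. This direction is essentially a direct unwinding of the definitions.

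For the converse of (1), I would assume $f^{-1}$ carries $\mathcal{D}(Y)$ into $\mathcal{D}(X)$ and first establish monotonicity: if $x\leq y$ and $U\in\mathcal{D}(Y)$ with $f(x)\in U$, then $x\in f^{-1}(U)\in\mathcal{D}(X)$, and since $f^{-1}(U)$ is an upper set (Remark \ref{2.2}(3)) we get $y\in f^{-1}(U)$, i.e. $f(y)\in U$; as this holds for every such $U$ and the specialization order of $\mathcal{D}(Y)$ agrees with $\leq$ (Remark \ref{2.2}(4)), it follows that $f(x)\leq f(y)$. Monotonicity in particular makes $f(D)$ directed whenever $D$ is. It then remains to show $f$ preserves limits: given $(D,x)\in D(X)$, I would verify $f(D)\rightarrow_{\tau}f(x)$, and by Remark \ref{2.2}(5) it is enough to test $\mathcal{D}(Y)$-open neighborhoods. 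So let $U\in\mathcal{D}(Y)$ with $f(x)\in U$; then $x\in f^{-1}(U)\in\mathcal{D}(X)$, whence the monotone determined openness of $f^{-1}(U)$ gives $D\cap f^{-1}(U)\neq\emptyset$ and therefore $f(D)\cap U\neq\emptyset$. The main obstacle sits precisely here: the defining condition only yields that $f(D)$ \emph{meets} $U$, whereas convergence requires $f(D)$ to be \emph{eventually} in $U$. I would resolve this by observing that $U$ is an upper set and $f(D)$ is directed, so once $f(D)$ attains a value $f(d_{0})\in U$ every later value lies in $\uparrow f(d_{0})\subseteq U$; thus $f(D)$ is eventually in $U$, giving $f(D)\rightarrow_{\tau}f(x)$ and hence $(f(D),f(x))\in D(Y)$.

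Finally, part (2) should follow at once: when $X$ and $Y$ are monotone determined spaces one has $\mathcal{D}(X)=\tau$ and $\mathcal{D}(Y)=\tau$, so the condition of part (1), namely $f^{-1}(U)\in\mathcal{D}(X)$ for every $U\in\mathcal{D}(Y)$, is literally the assertion that $f$ is continuous. Invoking part (1) then identifies continuity with monotone determined continuity.
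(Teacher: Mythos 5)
Your proposal is correct, but there is nothing in the paper to compare it against: Proposition 2.18 is stated as a preliminary quoted from the references [16,25], and the paper gives no proof of it. So your argument stands on its own, and it is the natural one. The forward direction of (1) is indeed a direct unwinding of Definition 2.17 and the definition of monotone determined open sets. In the converse, you correctly see that monotonicity must be established first (it is part of the definition of monotone determined continuity, and it is also what makes $f(D)$ directed), and your derivation of it is sound: $f^{-1}(U)$ is an upper set by Remark 2.2(3), and the agreement of the specialization orders of $\tau$ and $\mathcal{D}(Y)$ from Remark 2.2(4) converts ``every $\mathcal{D}(Y)$-neighborhood of $f(x)$ contains $f(y)$'' into $f(x)\leq f(y)$. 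You also correctly isolate the one genuinely delicate point, namely that the defining property of monotone determined open sets only yields $f(D)\cap U\neq\emptyset$ while convergence demands eventual containment; your resolution --- once $f(d_{0})\in U$, every later member of the net lies in $\uparrow f(d_{0})\subseteq U$ because $U$ is an upper set --- is exactly why, for directed sets viewed as nets, ``meets every open neighborhood'' and ``converges'' coincide, and it is the same mechanism the paper itself uses elsewhere (e.g.\ in Proposition 3.2). The reduction via Remark 2.2(5) to testing only $\mathcal{D}(Y)$-open sets is legitimate, and part (2) is indeed immediate from $\mathcal{D}(X)=\tau_X$, $\mathcal{D}(Y)=\tau_Y$. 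In short: a complete and correct proof of a statement the paper leaves unproved.
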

\section{One-step closure and c-space}\label{sec3}

In the previous section, we see that a monotone determined space is c-space iff it is d-continuous space. In this section, We first give the equivalent conditions of c-spaces. And then we introduce the concept
of one-step closure and show that every continuous space has one-step closure. At the same time, we prove that all monotone determined spaces having one-step closure are d-meet continuous space and continuous retract of monotone determined space which has one-step closure has one-step closure.
\begin{defn}\label{3.1}
Let $X$ be a monotone determined space and $A\subseteq L$,  the lower and upper approximations of $A$, denoted by $A^{\downarrow \ll}$ and $A^{\uparrow \ll}$ respectively, are defined by $A^{\downarrow \ll}=\{x\in X: x\in A$ and $\Da_{d} x\cap A\neq \emptyset\}$ and
$A^{\uparrow \ll}=\{x\in X: \Da_{d} x\subseteq \downarrow A\}$.
\end{defn}

Let $X$ be a monotone determined space and $a\in X$, then the followings are immediate consequences arising from Definition \ref{3.1}: (1) $(\uparrow a)^{\downarrow\ll}=\Ua_{d} a$; (2) For any subset $A\subseteq X$ and $x\in A$, $x\in A^{\downarrow \ll}$ iff  there exists $y\in A$ such that $y\ll_{d} x$; (3) $A^{\uparrow \ll}$ is lower set; (4) If $A$ is a upper set, $A^{\downarrow \ll}$ is upper set.

\begin{prop}\label{3.2}
Let $X$ be a monotone determined space and $U\subseteq X$. If $U$ is upper set and $U=U^{\downarrow \ll}$, then $U$ is
monotone determined open.
\end{prop}
\begin{proof}
Since $U$ is upper, it suffices to show that for any directed $D\subseteq X$ and $D\rightarrow_{\tau} x\in U$, $D\cap U\neq \emptyset$. To this end, let $D$ be a directed subset and $D\rightarrow_{\tau} x\in U$. Because $U=U^{\downarrow \ll}$, $\Da_{d} x\cap U\neq\emptyset$. Hence there exists $u\in U$ such that already $u\ll_{d} x$. Since $D\rightarrow_{\tau} x$ and $u\ll_{d} x$, there
is a $d\in D$ such that $u\leq d$. Since $U$ is upper and $u\in U$, $d\in U$. Then $D\cap U\neq\emptyset$. That is $U$ is monotone determined open.
\end{proof}

For any topological space $X$ and $A\subseteq X$, it holds that $X\backslash cl(A)=int (X\backslash A)$ and hence $cl(A)\cup int(X\backslash A)=X$ and  $cl(A)\cap int(X\backslash A)=\emptyset$.

\begin{prop}\label{3.3}
 Let $X$ be a directed space. The followings are equivalent.

(1) $X$ is c-space.

(2) $int(A)=A^{\downarrow\ll}$ for any upper set $A\subseteq X$.

(3) $cl(B)=B^{\uparrow\ll}$ for any lower set $A\subseteq X$.
\end{prop}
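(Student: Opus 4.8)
The plan is to prove the cycle by establishing $(1)\Leftrightarrow(2)$ by a direct argument through the $d$-way-below relation, and then to deduce $(3)$ from $(2)$ by a purely order-theoretic complementation, so that no new convergence computation is needed for the lower-set statement. For $(1)\Rightarrow(2)$ with $A$ upper I would prove two inclusions. For $A^{\downarrow\ll}\subseteq int(A)$: if $x\in A^{\downarrow\ll}$ there is $y\in A$ with $y\ll_d x$; since $X$ is a c-space it is $d$-continuous, so by Theorem~\ref{2.6}(3) the relation $y\ll_d x$ means $x\in(\uparrow y)^{\circ}$, and as $A$ is upper we get $\uparrow y\subseteq A$, hence $x\in(\uparrow y)^{\circ}\subseteq int(A)$. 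For $int(A)\subseteq A^{\downarrow\ll}$: given $x\in int(A)$, the c-space property applied to the open set $int(A)$ yields $z$ with $x\in int(\uparrow z)\subseteq\uparrow z\subseteq int(A)$; then $z\ll_d x$ (again Theorem~\ref{2.6}(3)) and $z\in A$, so $z\in\Da_d x\cap A$ while $x\in A$, giving $x\in A^{\downarrow\ll}$.

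For $(2)\Rightarrow(1)$ I would check the definition of c-space directly. Take $y\in U\in\tau$; since $U$ is open it is upper and equals its own interior, so $(2)$ applied to $A=U$ gives $U=U^{\downarrow\ll}$, whence there is $x\in U$ with $x\ll_d y$. Because $U$ is upper, $\uparrow x\subseteq U$, and applying $(2)$ to the upper set $\uparrow x$ gives $int(\uparrow x)=(\uparrow x)^{\downarrow\ll}=\Ua_d x$, the last equality being the consequence of Definition~\ref{3.1} recorded immediately after it. From $x\ll_d y$ we obtain $y\in\Ua_d x=int(\uparrow x)$, so $y\in int(\uparrow x)\subseteq\uparrow x\subseteq U$, which is exactly the c-space condition.

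For $(2)\Leftrightarrow(3)$ I would use the bijection $A\mapsto X\setminus A$ between upper and lower sets together with $X\setminus cl(B)=int(X\setminus B)$. The key computation is the set identity $X\setminus B^{\uparrow\ll}=(X\setminus B)^{\downarrow\ll}$ for every lower set $B$. Writing $A=X\setminus B$, one has $B^{\uparrow\ll}=\{x:\Da_d x\subseteq\downarrow B\}=\{x:\Da_d x\subseteq B\}$ since $B$ is lower, hence $X\setminus B^{\uparrow\ll}=\{x:\Da_d x\cap A\neq\emptyset\}$. Now $y\ll_d x$ forces $y\leq x$ (test the $d$-way-below relation against the directed net $\{x\}$, which converges to $x$), so whenever $\Da_d x$ meets the upper set $A$ the point $x$ already lies in $A$; therefore the membership clause in the definition of $A^{\downarrow\ll}$ is automatic and $\{x:\Da_d x\cap A\neq\emptyset\}=A^{\downarrow\ll}$. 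Combining with $X\setminus cl(B)=int(A)$, the equation $cl(B)=B^{\uparrow\ll}$ is equivalent to $int(A)=A^{\downarrow\ll}$; letting $B$ range over all lower sets (equivalently $A$ over all upper sets) yields $(2)\Leftrightarrow(3)$.

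I expect the main obstacle to be the $(1)\Leftrightarrow(2)$ part, namely making sure each use of the characterization $y\ll_d x\Leftrightarrow x\in(\uparrow y)^{\circ}$ is genuinely licensed by $d$-continuity via Theorem~\ref{2.6}, and handling the extra membership clause $x\in A$ in the definition of $A^{\downarrow\ll}$ correctly both in the interior argument and in the complementation identity; once those are controlled, the passage $(2)\Leftrightarrow(3)$ is essentially formal.
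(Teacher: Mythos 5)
Your proof is correct, and its overall architecture --- establishing $(1)\Leftrightarrow(2)$ through the interior characterization of $\ll_{d}$, then deducing $(2)\Leftrightarrow(3)$ by complementing upper sets against lower sets --- matches the paper's; in particular, your identity $X\setminus B^{\uparrow\ll}=(X\setminus B)^{\downarrow\ll}$ is precisely the disjoint-cover statement $A^{\downarrow\ll}\cup(X\setminus A)^{\uparrow\ll}=X$ and $A^{\downarrow\ll}\cap(X\setminus A)^{\uparrow\ll}=\emptyset$ that the paper proves, and both versions rest on the same observation that $y\ll_{d}x$ forces $y\leq x$ (so the membership clause $x\in A$ is automatic when $A$ is upper). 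The one genuine divergence is in $(2)\Rightarrow(1)$: the paper verifies d-continuity --- it first shows $\Da_{d}x$ is directed, using $int(\uparrow a)\cap int(\uparrow b)=int(\uparrow a\cap\uparrow b)=(\uparrow a\cap\uparrow b)^{\downarrow\ll}$ to produce binary upper bounds, then shows $\Da_{d}x\rightarrow_{\tau}x$, and finally invokes the equivalence of Theorem \ref{2.5} to conclude $X$ is a c-space --- whereas you verify the c-space interpolation axiom directly: from $y\in U=U^{\downarrow\ll}$ you extract $x\in U$ with $x\ll_{d}y$, and $(2)$ applied to $\uparrow x$ gives $y\in\Ua_{d}x=int(\uparrow x)\subseteq\uparrow x\subseteq U$. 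Your route is shorter and never needs the directedness of $\Da_{d}x$ nor Theorem \ref{2.5}; the paper's route pays for the extra directedness computation but delivers d-continuity explicitly, which is the form (directedness of $\Da_{d}x$ together with $\Da_{d}x\rightarrow_{\tau}x$) that the paper immediately reuses, e.g.\ in the proof of Proposition \ref{3.7}. Each step of your argument is licensed by the quoted results (Theorem \ref{2.6}(3) under the d-continuity guaranteed by Theorem \ref{2.5}, and the consequence $(\uparrow a)^{\downarrow\ll}=\Ua_{d}a$ recorded after Definition \ref{3.1}), so the proposal stands as a valid, mildly more economical proof.
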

\begin{proof}
(1)$\Rightarrow$ (2) If $x\in int(A)$, then $x\in A$. Since $X$ is c-space, $\{\Ua_{d} x:x\in X\}$ is a basis of $\tau$ by Theorem \ref{2.5} and \ref{2.6}. Then $x\in \Ua_{d} a\subseteq \uparrow a\subseteq int(A)$ for some $a\in X$. That is $a\in \Da_{d} x\cap A$, then $x\in A^{\downarrow\ll}$ and $int(A)\subseteq A^{\downarrow\ll}$. Let $x\in A^{\downarrow\ll}$. then $\Da_{d} x\cap A\neq \emptyset$. Thus there exists a $a\in \Da_{d} x\cap A$ such that $x\in \Ua_{d} a\subseteq A$. Since $\Ua_{d} a$ is open set, $x\in int(A)$ and $A^{\downarrow\ll}\subseteq int(A)$. Thus we have $int(A)=A^{\downarrow\ll}$.

(2)$\Rightarrow$ (1)
Let $a,b\in \Da_{d} x$, then $x\in \Ua_{d} a\cap \Ua_{d} b$. Since $\Ua_{d} a=(\uparrow a)^{\downarrow\ll}=int(\uparrow a)$ and $\Ua_{d} b=(\uparrow b)^{\downarrow\ll}=int(\uparrow b)$, $x\in \Ua_{d} a\cap \Ua_{d} b=int(\uparrow a)\cap int(\uparrow b)=int(\uparrow a\cap \uparrow b)=(\uparrow a\cap \uparrow b)^{\downarrow \ll}$. Thus $\Da_{d} x\cap (\uparrow a\cap \uparrow b)\neq \emptyset$. Let $c\in\uparrow a\cap \uparrow b$ such that $c\ll_{d} x$. This proves that $\Da_{d} x$ is a directed set.

Let $x\in U\in \mathcal{D}(X)$, then $U=int(U)=U^{\downarrow\ll}$. Since $x\in U^{\downarrow\ll}$, $\Da_{d} x\cap U\neq \emptyset$. Thus $\Da_{d} x\rightarrow_{\tau} x$. Then $X$ is d-continuous by Definition \ref{2.4}. Hence $X$ is a c-space.

(2)$\Leftrightarrow$ (3) Let $A$ is upper set, then $x\in A^{\downarrow \ll}\Leftrightarrow x\in A$ and $\Da_{d} x\cap A\neq \emptyset \Leftrightarrow x\in A$ and $\Da_{d} x\nsubseteq X\backslash A$. Since $X\backslash A=\downarrow X\backslash A$, then $x\in A$ and $x\notin (X\backslash A)^{\uparrow \ll}$. Then $x\in A\cap X\backslash (X\backslash A)^{\uparrow \ll}$ and $A^{\downarrow \ll}\subseteq A\cap X\backslash (X\backslash A)^{\uparrow \ll}$. Conversely, if $x\in A\cap X\backslash (X\backslash A)^{\uparrow \ll}$, then $x\notin (X\backslash A)^{\uparrow \ll}$. Thus there exists $a\in \Da_{d} x$ such that $a\notin X\backslash A$. Hence $a\in A\cap \Da_{d} x$, $x\in A^{\downarrow \ll}$. That is $A\cap X\backslash (X\backslash A)^{\uparrow \ll}\subseteq A^{\downarrow \ll}$. Then $A\cap X\backslash (X\backslash A)^{\uparrow \ll}= A^{\downarrow \ll}$. Since $X\backslash A\subseteq (X\backslash A)^{\uparrow \ll}$, $A^{\downarrow \ll}\cup (X\backslash A)^{\uparrow \ll}=X$ and $A^{\downarrow \ll}\cap (X\backslash A)^{\uparrow \ll}=\emptyset$.

Now we show that (2)$\Leftrightarrow$ (3). If $B\subseteq X$ is a lower set, then $cl(B)=X\backslash int(X\backslash B)=X\backslash (X\backslash B)^{\downarrow \ll}$. Let $A= X\backslash B$, then $cl(B)=X\backslash (A)^{\downarrow \ll}=(X\backslash A)^{\uparrow \ll}=B^{\uparrow \ll}$. Conversely, if $A\subseteq X$ is a upper set, then $int(A)=X\backslash cl(X\backslash A)=X\backslash (X\backslash A)^{\uparrow \ll}=A^{\downarrow \ll}$.
\end{proof}

\begin{cor}\label{3.4}
 Let $X$ be a  c-space, $A\subseteq X$ and $x\in X$. Then $x\in cl(A)$  if and only if $\Da_{d} x\subseteq \downarrow A$.
\end{cor}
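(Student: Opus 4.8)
The plan is to deduce this from Proposition~\ref{3.3}(3), which already supplies $cl(B)=B^{\uparrow\ll}$ for every \emph{lower} set $B$; the only new content is to drop the hypothesis that $A$ itself be a lower set. First I would observe that, by Definition~\ref{3.1}, the right-hand condition $\Da_d x\subseteq \downarrow A$ is literally the assertion $x\in A^{\uparrow\ll}$. Moreover, since $\downarrow(\downarrow A)=\downarrow A$, the condition $\Da_d x\subseteq \downarrow A$ coincides with $\Da_d x\subseteq \downarrow(\downarrow A)$, so that $A^{\uparrow\ll}=(\downarrow A)^{\uparrow\ll}$. Hence it will suffice to establish the set equality $cl(A)=(\downarrow A)^{\uparrow\ll}$.

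The key step is to show $cl(A)=cl(\downarrow A)$. For this I would use the fact that in any $T_0$ space the closure of a set is a lower set for the specialization order: if $y\in cl(A)$ and $z\leq y$, then $z\in cl\{y\}\subseteq cl(cl(A))=cl(A)$. Thus $cl(A)$ is a lower set containing $A$, so it contains $\downarrow A$, giving $cl(\downarrow A)\subseteq cl(A)$; the reverse inclusion is immediate from $A\subseteq\downarrow A$. Therefore $cl(A)=cl(\downarrow A)$. Now $\downarrow A$ is a lower set, so Proposition~\ref{3.3}(3) applies and yields $cl(\downarrow A)=(\downarrow A)^{\uparrow\ll}$.

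Chaining these equalities gives
\[
cl(A)=cl(\downarrow A)=(\downarrow A)^{\uparrow\ll}=A^{\uparrow\ll}=\{x\in X:\Da_d x\subseteq\downarrow A\},
\]
which is exactly the claimed equivalence $x\in cl(A)\Leftrightarrow \Da_d x\subseteq\downarrow A$. I do not anticipate any genuine obstacle, since the whole argument is a reduction to the lower-set case already handled. The single point that must be stated with care is that $cl(A)$ is a lower set in the specialization order, as this is precisely what licenses replacing $A$ by $\downarrow A$ inside the closure operator; everything else is a direct unwinding of Definition~\ref{3.1} together with the idempotence $\downarrow(\downarrow A)=\downarrow A$.
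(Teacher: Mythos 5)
Your proposal is correct and follows essentially the same route as the paper: the paper's proof is precisely the chain $cl(A)=cl(\downarrow A)=(\downarrow A)^{\uparrow\ll}=A^{\uparrow\ll}$ obtained from Proposition~\ref{3.3}, which is what you establish. You merely make explicit the two justifications the paper leaves implicit (that $cl(A)$ is a lower set under the specialization order, and that $\downarrow(\downarrow A)=\downarrow A$), which is a faithful filling-in rather than a different argument.
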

\begin{proof}
By Proposition \ref{3.3}, $cl(A)=cl(\downarrow A)=(\downarrow A)^{\uparrow\ll}=A^{\uparrow\ll}$. So $x\in cl(A)$  if and only if $\Da_{d} x\subseteq \downarrow A$.
\end{proof}

Let $X$ be a  monotone determined space and $A\subseteq X$, we define $\widetilde{A}=\{x\in X: \exists$ directed set $D\subseteq \downarrow A\cap\downarrow x$, $D\rightarrow_{\tau} x\}$.

\begin{prop}\label{3.5}
 Let $X$ be a  monotone determined space and $A\subseteq X$. The following hold.

(1) $A\subseteq \downarrow A\subseteq \widetilde{A}\subseteq cl(A)$.

(2) $\widetilde{A}\subseteq A^{\uparrow \ll}$.
\end{prop}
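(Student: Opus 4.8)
The plan is to establish both statements by unwinding the definitions of $\widetilde{A}$, $\downarrow A$, and $A^{\uparrow\ll}$, relying only on the singleton-net identity $x\leq y\Leftrightarrow\{y\}\rightarrow_{\tau}x$, the fact that open sets of a $T_{0}$ space are upper sets for the specialization order, and the defining property of the relation $\ll_{d}$. For (1) I would prove the chain of inclusions one link at a time; for (2) I would show $\Da_{d}x\subseteq\downarrow A$ directly for each $x\in\widetilde{A}$.

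For the chain in (1), the inclusion $A\subseteq\downarrow A$ is immediate from $a\leq a$. For $\downarrow A\subseteq\widetilde{A}$, given $x\in\downarrow A$ I would exhibit the one-point directed set $D=\{x\}$: since $x\leq a$ for some $a\in A$ we have $x\in\downarrow A$, trivially $x\in\downarrow x$, so $\{x\}\subseteq\downarrow A\cap\downarrow x$, and $\{x\}\rightarrow_{\tau}x$ because $x\leq x$; hence $x\in\widetilde{A}$. The one genuinely topological link is $\widetilde{A}\subseteq cl(A)$: taking a witnessing directed $D\subseteq\downarrow A\cap\downarrow x$ with $D\rightarrow_{\tau}x$, for every open neighbourhood $U$ of $x$ convergence gives some $d\in D\cap U$; since $d\in\downarrow A$ there is $a\in A$ with $d\leq a$, and because $U$ is open it is an upper set, so $a\in U\cap A$, whence $x\in cl(A)$.

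For (2), I would fix $x\in\widetilde{A}$ together with a witnessing directed set $D\subseteq\downarrow A\cap\downarrow x$ satisfying $D\rightarrow_{\tau}x$. The key observation is that this same $D$ can be fed into the definition of the d-way-below relation: for any $y\in\Da_{d}x$, i.e.\ $y\ll_{d}x$, the convergence $D\rightarrow_{\tau}x$ forces $y\leq d$ for some $d\in D$; since $d\in\downarrow A$ there is $a\in A$ with $d\leq a$, so $y\leq a$ and $y\in\downarrow A$. Thus $\Da_{d}x\subseteq\downarrow A$, i.e.\ $x\in A^{\uparrow\ll}$.

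I do not expect any serious obstacle here, as the argument is essentially definitional. The only point requiring a little care is the step $\widetilde{A}\subseteq cl(A)$, where one must remember that open sets in a $T_{0}$ space are upper sets for the specialization order in order to pass from a point $d\in\downarrow A$ lying in a neighbourhood $U$ to an actual member of $A$ in $U$. The reuse of the single witnessing net $D$ in part (2) is what keeps that argument clean rather than forcing any fresh construction.
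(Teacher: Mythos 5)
Your proof is correct and follows essentially the same route as the paper: part (2) is word-for-word the paper's argument (feed the witnessing directed set $D\subseteq\downarrow A\cap\downarrow x$ into the definition of $\ll_{d}$ and pull the bound down into $\downarrow A$), while part (1), which the paper dismisses as obvious, is filled in by you with exactly the expected definitional details (singleton nets and the fact that open sets are upper in the specialization order). No gaps.
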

\begin{proof}
(1) Obvious.

(2) Suppose $x\in \widetilde{A}$. Then there is a directed subset $D$ of $\downarrow A\cap \downarrow x$ such that $D\rightarrow_{\tau} x$. For each
$y\ll_{d} x$, there is an element $d\in D$ such that $y\leq d$. Since $d\in \downarrow A$ and $y\leq d$, $y\in \downarrow A$.  So that $\Da_{d} x\subseteq \downarrow A$.
Hence $x\in A^{\uparrow \ll}$.
\end{proof}

By Proposition \ref{3.3} and \ref{3.5}, if $X$ is d-continuous space and $A\subseteq X$ is a lower set, then $\widetilde{A}\subseteq A^{\uparrow \ll}=cl(A)$.

\begin{defn}\label{3.6}
 A  monotone determined space $X$ is said to have one-step closure if $\widetilde{A}=cl(A)$ for every subset $A\subseteq X$.
\end{defn}

\begin{prop}\label{3.7}
 Every c-space has one-step closure.
\end{prop}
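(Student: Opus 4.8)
The plan is to prove the two inclusions $\widetilde{A}\subseteq cl(A)$ and $cl(A)\subseteq\widetilde{A}$ separately. The first is already available for free from Proposition \ref{3.5}(1), which gives $\widetilde{A}\subseteq cl(A)$ for every $A\subseteq X$ with no hypothesis beyond $X$ being a monotone determined space. Hence the whole content of the statement reduces to establishing the reverse inclusion $cl(A)\subseteq\widetilde{A}$, and this is where the c-space assumption must be used.

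To prove $cl(A)\subseteq\widetilde{A}$, I would fix an arbitrary $x\in cl(A)$ and try to exhibit an explicit directed set sitting inside $\downarrow A\cap\downarrow x$ that converges to $x$. The natural candidate is $\Da_{d} x$ itself. Since $X$ is a c-space, Theorem \ref{2.5} tells us $X$ is d-continuous, so by Definition \ref{2.4} the set $\Da_{d} x$ is directed and satisfies $\Da_{d} x\rightarrow_{\tau} x$; this settles the directedness and the convergence requirements at once. It then remains only to verify the containment $\Da_{d} x\subseteq\downarrow A\cap\downarrow x$.

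I would check the two halves of that containment as follows. For $\Da_{d} x\subseteq\downarrow x$, note that $y\ll_{d} x$ forces $y\leq x$: since $x\leq x$ we have $\{x\}\rightarrow_{\tau} x$, and feeding the one-point directed set $\{x\}$ into the definition of $\ll_{d}$ yields $y\leq x$. For $\Da_{d} x\subseteq\downarrow A$, I would invoke Corollary \ref{3.4}, which says precisely that in a c-space $x\in cl(A)$ is equivalent to $\Da_{d} x\subseteq\downarrow A$; since we assumed $x\in cl(A)$, this inclusion holds. Combining both gives $\Da_{d} x\subseteq\downarrow A\cap\downarrow x$, so $D=\Da_{d} x$ is a directed set witnessing $x\in\widetilde{A}$, completing $cl(A)\subseteq\widetilde{A}$.

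I do not expect a serious obstacle here: the argument is a direct assembly of Corollary \ref{3.4} and Definition \ref{2.4}. The one point I would be careful about is that $\Da_{d} x$ really qualifies as a legitimate witnessing net in the sense of the definition of $\widetilde{A}$ — that is, that it is genuinely nonempty, directed, and convergent to $x$ — but all three are guaranteed the moment we use d-continuity of the c-space, so the verification is routine.
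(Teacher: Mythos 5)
Your proof is correct and follows essentially the same route as the paper's: both establish $cl(A)\subseteq\widetilde{A}$ by taking $D=\Da_{d}x$ as the witness, using Corollary \ref{3.4} for $\Da_{d}x\subseteq\downarrow A$ and d-continuity (via Theorem \ref{2.5}) for directedness and convergence. Your explicit verification that $\Da_{d}x\subseteq\downarrow x$ (by feeding the singleton $\{x\}$ into the definition of $\ll_{d}$) is a detail the paper leaves implicit, but the argument is the same.
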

\begin{proof}
Let $X$ be a c-space, then $X$ is a d-continuous poset If $A\subseteq X$, then, on one hand, $\widetilde{A}\subseteq cl(A)$ holds
by Proposition \ref{3.5}. On the other hand, if $x\in cl(A)$, then $\Da_{d} x\subseteq \downarrow A$ by virtue of
Corollary \ref{3.4}. But $X$ is continuous so that $\Da_{d} x$ is directed and $\Da_{d} x\rightarrow x$. Since directed set $\Da_{d} x\subseteq \downarrow A\cap \downarrow x$ and $\Da_{d} x\rightarrow x$, $x\in \widetilde{A}$. Thus, we can conclude that $\widetilde{A}=cl(A)$.
\end{proof}

\begin{prop}\label{3.8}
All  monotone determined spaces having one-step closure are d-meet continuous space.
\end{prop}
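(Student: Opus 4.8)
The plan is to verify the d-meet continuity condition of Definition~\ref{2.13} directly, by feeding one-step closure the right set. So let $x\in X$ and let $D$ be a directed subset with $D\rightarrow_{\tau}x$; the goal is to show $x\in cl(\downarrow D\cap\downarrow x)$.

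First I would record the one soft topological fact needed: a convergent directed net lies, together with its limit, inside the closure of its range. Since $D\rightarrow_{\tau}x$ and $D$ is nonempty, every open neighbourhood of $x$ meets $D$, so $x\in cl(D)$. This uses nothing beyond the $T_{0}$ hypothesis and the definition of $\rightarrow_{\tau}$.

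The key move is then to apply the hypothesis not to $\downarrow D\cap\downarrow x$ but to $D$ itself. By one-step closure (Definition~\ref{3.6}), $cl(D)=\widetilde{D}$, whence $x\in\widetilde{D}$. Unwinding the definition of $\widetilde{D}$ at the point $x$, there is a directed set $E$ with $E\subseteq\downarrow D\cap\downarrow x$ and $E\rightarrow_{\tau}x$. The observation that makes the argument short is that the set $\downarrow D\cap\downarrow x$ occurring in the definition of $\widetilde{D}$ is \emph{exactly} the set named in the d-meet continuity condition; no further set-theoretic manipulation is required.

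Finally, from $E\subseteq\downarrow D\cap\downarrow x$ and $E\rightarrow_{\tau}x$ (again using that a convergent directed set has its limit in the closure of its range) I obtain $x\in cl(E)\subseteq cl(\downarrow D\cap\downarrow x)$, which is precisely the conclusion, so $X$ is d-meet continuous. I do not anticipate a real obstacle: essentially all of the content is in choosing to apply one-step closure to $D$ rather than to $\downarrow D\cap\downarrow x$, after which the witnessing directed set $E$ is handed to us already sitting in the required set. As a consistency check one could instead route through the characterization in Lemma~\ref{2.14} by proving $\uparrow(U\cap\downarrow F)\in\tau$, but the definitional path above is the cleaner one.
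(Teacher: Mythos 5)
Your proof is correct and takes essentially the same route as the paper: apply one-step closure to $D$ itself to get $x\in cl(D)=\widetilde{D}$, extract a directed set $E\subseteq \downarrow D\cap \downarrow x$ with $E\rightarrow_{\tau}x$, and conclude $x\in cl(\downarrow D\cap \downarrow x)$. The only (harmless) difference is the finish: the paper re-applies one-step closure to identify $\widetilde{\downarrow D\cap \downarrow x}$ with $cl(\downarrow D\cap \downarrow x)$, whereas you use only $x\in cl(E)$ and monotonicity of closure, which is marginally more economical.
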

\begin{proof}
Suppose that there is a directed set $D\subseteq X$ and $D\rightarrow_{\tau} x$. Since $X$ has one-step closure, $x\in cl(D)=\widetilde{D}$. So, there exists a directed set $D'\subseteq \downarrow D \cap \downarrow x$ such that $D'\rightarrow_{\tau} x$. Since $D'\subseteq (\downarrow D \cap \downarrow x)\cap \downarrow x$ and $D'\rightarrow_{\tau} x$, $x\in \widetilde{\downarrow D \cap \downarrow x}$. Since $X$ has one-step closure, $\widetilde{\downarrow D \cap \downarrow x}=cl(\downarrow D \cap \downarrow x)$ and $x\in cl(\downarrow D \cap \downarrow x)$. This completes the proof that $X$ is d-meet continuous.
\end{proof}

\begin{prop}\label{3.9}
Let $X$ be a  monotone determined space that has one-step closure. Then $int(\uparrow x)=\Ua_{d} x$ for each $x\in X$.

\end{prop}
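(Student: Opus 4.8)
The plan is to prove the two inclusions separately, observing that only one of them actually uses the one-step closure hypothesis.

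First I would dispose of the inclusion $int(\uparrow x)\subseteq \Ua_d x$, which in fact holds in every monotone determined space and requires nothing beyond the definitions. Given $y\in int(\uparrow x)$, I take an arbitrary directed set $D$ with $D\rightarrow_{\tau} y$. Since $int(\uparrow x)$ is an open neighbourhood of $y$, the net $D$ is eventually in $int(\uparrow x)\subseteq\uparrow x$; in particular there is a $d\in D$ with $d\in\uparrow x$, i.e. $x\leq d$. As $D$ was an arbitrary directed net converging to $y$, this is precisely the assertion $x\ll_{d} y$, so $y\in\Ua_d x$.

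The reverse inclusion $\Ua_d x\subseteq int(\uparrow x)$ is where the hypothesis enters, and this is the main step. I would pass to complements using the identity $int(\uparrow x)=X\backslash cl(X\backslash\uparrow x)$ noted before Proposition \ref{3.3}. Writing $A:=X\backslash\uparrow x$, note that $\uparrow x$ is an upper set, so $A$ is a lower set and hence $\downarrow A=A$. Because $X$ has one-step closure, Definition \ref{3.6} gives $cl(A)=\widetilde{A}=\{z\in X:\exists\text{ directed }D\subseteq A\cap\downarrow z\text{ with }D\rightarrow_{\tau} z\}$, the last equality using $\downarrow A=A$. Thus it suffices to show that $y\notin\widetilde{A}$ whenever $y\in\Ua_d x$.

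To that end I argue by contradiction: suppose $y\in\Ua_d x$ (so $x\ll_{d} y$) but $y\in\widetilde{A}$. Then there is a directed set $D\subseteq A\cap\downarrow y=(X\backslash\uparrow x)\cap\downarrow y$ with $D\rightarrow_{\tau} y$. On the one hand, $x\ll_{d} y$ together with $D\rightarrow_{\tau} y$ forces $x\leq d$ for some $d\in D$; on the other hand, $D\subseteq X\backslash\uparrow x$ means no member of $D$ lies above $x$. This contradiction yields $y\notin\widetilde{A}=cl(A)$, i.e. $y\in int(\uparrow x)$, completing the inclusion. I expect the only delicate point to be recognizing that one-step closure applies directly to the complement $A=X\backslash\uparrow x$: precisely because $A$ is a lower set, the directed witnesses defining $\widetilde{A}$ lie inside $A$ itself rather than merely inside $\downarrow A$, and this is exactly what lets the d-way-below relation $x\ll_{d} y$ produce the contradiction. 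Everything else is a routine unwinding of the definitions of $\ll_{d}$, $\Ua_d$, and $\widetilde{(\cdot)}$.
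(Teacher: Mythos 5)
Your proof is correct and follows essentially the same route as the paper's: both establish $int(\uparrow x)\subseteq \Ua_{d}x$ as the easy direction, then prove $\Ua_{d}x\subseteq int(\uparrow x)$ by applying one-step closure to the lower set $X\backslash \uparrow x$ (so that $\widetilde{X\backslash\uparrow x}=cl(X\backslash\uparrow x)$ complements $int(\uparrow x)$) and deriving a contradiction between the directed witness $D\subseteq (X\backslash\uparrow x)\cap\downarrow y$ converging to $y$ and the relation $x\ll_{d}y$. The only difference is presentational: you state the contrapositive ($y\in\Ua_{d}x\Rightarrow y\notin\widetilde{A}$) where the paper assumes $y\in\Ua_{d}x\backslash int(\uparrow x)$ outright, which is the same argument.
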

\begin{proof}
Since $int(\uparrow x)\subseteq \Ua_{d} x$ always holds for any element $x\in X$, it remains to prove the reverse
containment. For the sake of contradiction, suppose that there is $y\in \Ua_{d} x\backslash int(\uparrow x)$. Since $X$ be a  monotone determined space that has one-step closure, $cl(X\backslash \uparrow x) = \widetilde{X\backslash \uparrow x}$. Then
$y\in X\backslash int(\uparrow x)= cl(X\backslash \uparrow x) = \widetilde{X\backslash \uparrow x}$. So there is a directed subset $D\subseteq \downarrow (X\backslash \uparrow x)\cap\downarrow y= (X\backslash \uparrow x)\cap \downarrow y$
such that $D\rightarrow y$ and $D\cap \uparrow x=\emptyset$. This will run contrary to $x\ll_{d} y$. Hence $\Ua_{d} x\subseteq int(\uparrow x)$.
\end{proof}

Let $X, Y$ be two topological space. $Y$ is called a continuous retract of $X$ if
there exist continuous functions $r: X\rightarrow Y$ and $s :Y\rightarrow X$ such that $r\circ s = 1_{Y}$.

\begin{prop}\label{3.10}
Let $X, Y$ be a  monotone determined space and $Y$ be a continuous retract of $X$. If $X$ has  one-step closure, then $Y$ has one-step closure.
\end{prop}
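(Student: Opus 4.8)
The plan is to exploit the two directions of Proposition \ref{3.5} together with the way a retraction transports convergent directed sets between the two spaces. Since Proposition \ref{3.5}(1) already gives $\widetilde{A}\subseteq cl(A)$ for every $A\subseteq Y$, the only thing to establish is the reverse inclusion $cl(A)\subseteq\widetilde{A}$ for an arbitrary subset $A\subseteq Y$. The idea is to lift a point $y\in cl(A)$ to $X$ along $s$, invoke the one-step closure of $X$ to produce a suitable convergent directed set there, and then push that directed set back to $Y$ along $r$, using $r\circ s=1_{Y}$ to control where the resulting net lies.

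Concretely, fix $A\subseteq Y$ and $y\in cl(A)$. Because $s$ is continuous we have $s(cl(A))\subseteq cl(s(A))$, so $s(y)\in cl(s(A))$; and since $X$ has one-step closure, $cl(s(A))=\widetilde{s(A)}$. Hence there is a directed set $E\subseteq \downarrow s(A)\cap\downarrow s(y)$ with $E\rightarrow_{\tau}s(y)$ in $X$. Now $r$ is a continuous map between monotone determined spaces, so by Proposition \ref{2.18}(2) it is monotone determined continuous, and in particular monotone; applying it to $(E,s(y))\in D(X)$ yields $(r(E),r(s(y)))=(r(E),y)\in D(Y)$. In particular $r(E)$ is directed and $r(E)\rightarrow_{\tau}y$ in $Y$.

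It remains to check that $r(E)$ sits inside $\downarrow A\cap\downarrow y$, and this is where the retraction identity is essential. Monotonicity of $r$ and $E\subseteq\downarrow s(y)$ give $r(e)\leq r(s(y))=y$ for every $e\in E$, so $r(E)\subseteq\downarrow y$. For the other containment, each $e\in E$ satisfies $e\leq s(a)$ for some $a\in A$; applying the monotone map $r$ and using $r\circ s=1_{Y}$ gives $r(e)\leq r(s(a))=a\in A$, so $r(E)\subseteq\downarrow A$. Thus $r(E)$ is a directed subset of $\downarrow A\cap\downarrow y$ converging to $y$, which means $y\in\widetilde{A}$. This proves $cl(A)\subseteq\widetilde{A}$, and with Proposition \ref{3.5} we conclude $\widetilde{A}=cl(A)$, i.e.\ $Y$ has one-step closure. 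No step is genuinely hard here; the one point requiring care is the final containment $r(E)\subseteq\downarrow A$, which fails for a general continuous map and works precisely because $r\circ s=1_{Y}$ lets each $a\in A$ be recovered exactly as $r(s(a))$.
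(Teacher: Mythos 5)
Your proposal is correct and follows essentially the same route as the paper: lift $y\in cl_{Y}(A)$ to $s(y)\in cl_{X}(s(A))$, use one-step closure of $X$ to get a directed set in $\downarrow s(A)\cap\downarrow s(y)$ converging to $s(y)$, and push it back along $r$, using $r\circ s=1_{Y}$ to land in $\downarrow A\cap\downarrow y$. Your pointwise verification of $r(E)\subseteq\downarrow A\cap\downarrow y$ is in fact slightly more careful than the paper's, which asserts the set equality $r(\downarrow s(A))\cap r(\downarrow s(y))=\downarrow A\cap\downarrow y$ where only the inclusion $\subseteq$ holds (and suffices).
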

\begin{proof}
Since $Y$ is a continuous retract of $X$, we have that there exists two continuous maps $s: Y\rightarrow X$
and $r : X\rightarrow Y$ such that $id_{Y} = r\circ s$. Let $y\in Y$, $A\subseteq Y$ and $y\in cl_{Y}(A)$, then $s(y)\in s(cl_{Y}(A))\subseteq cl_{X}(s(A))$. Since  $X$ is a  monotone determined space that has one-step closure, $cl_{X}(s(A))=\widetilde{s(A)}$.
We know that there exists $D\subseteq \downarrow s(A)\cap \downarrow s(y)$ such that $D\rightarrow s(y)$. Since $r$ is continuous, $r(D)\rightarrow y$. Since $r(D)\subseteq r(\downarrow s(A)\cap \downarrow s(y))\subseteq r(\downarrow s(A))\cap r(\downarrow s(y))=\downarrow A\cap \downarrow y$, $y\in \widetilde{A}$. That is $cl_{Y}(A)\subseteq\widetilde{A}$. By Proposition \ref{3.5}, $cl_{Y}(A)=\widetilde{A}$ and $Y$ has one-step closure.
\end{proof}

\section{Weak one-step closure}\label{sec4}

In this section, we introduce the concept of weak one-step closure and show that every quasicontinuous space has weak one-step closure. Then we investigate the relationship between weak one-step closure and one-step closure.
\begin{defn}\label{4.1}
Let $X$ be a monotone determined space and $A\subseteq X$, we define $\widehat{A}=\{x\in X: \exists$ directed set $D\subseteq \downarrow A$, $D\rightarrow_{\tau} x\}$.
A monotone determined space $X$ is said to have weak one-step closure if $\widehat{A}=cl(A)$ for every subset $A\subseteq X$.
\end{defn}
Let $X$ be a monotone determined space and $A\subseteq X$. If $x\in \widehat{A}$ and $a\leq x$, there exists $D\subseteq \downarrow A$ such that $D\rightarrow_{\tau} x$. For any $U\in \mathcal{D}(X)$ and $a\in U$, $x\in U$. Then $D\cap U\neq \emptyset$, $D\rightarrow_{\tau} a$. Thus $a\in \widehat{A}$ by Definition \ref{4.1}. This show that $\widehat{A}$ is lower set.

\begin{lem}\cite{2}\label{4.2}(Rudin's Lemma)Let $\mathcal{F}$ be a directed family (with respective to the Smyth preorder) of nonempty finite subsets of a poset $P$. Then there exist a directed set $D\subseteq \bigcup_{F\in \mathcal{F}}F$ such that $D\cap F\neq \emptyset$ for all $F\in \mathcal{F}$.
\end{lem}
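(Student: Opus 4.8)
The plan is to prove this via Zorn's Lemma, but applied to \emph{downward closed} meeting sets rather than to arbitrary ones. Let $P$ be the poset and write $M=\bigcup_{F\in\mathcal F}F$. First I would consider the collection $\mathbb L$ of all lower sets $L\subseteq P$ (that is, $L=\downarrow L$) satisfying $L\cap F\neq\emptyset$ for every $F\in\mathcal F$, ordered by reverse inclusion. This collection is nonempty, since $\downarrow M\in\mathbb L$. To apply Zorn's Lemma I would check that every descending chain $(L_i)$ in $\mathbb L$ has the lower bound $\bigcap_i L_i$, which is again a lower set and still meets every $F$: because each $F$ is finite, the chain of nonempty sets $(L_i\cap F)$ has nonempty intersection. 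Hence $\mathbb L$ has a minimal element $L$.

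Next I would squeeze a pointwise condition out of minimality. Fix $x\in L$. The set $L\setminus\uparrow x$ is again a lower set (it is $L$ intersected with the lower set $P\setminus\uparrow x$) and is a proper subset of $L$. By minimality of $L$ it cannot meet every member of $\mathcal F$, so there is some $F_x\in\mathcal F$ with $(L\setminus\uparrow x)\cap F_x=\emptyset$; equivalently $\emptyset\neq L\cap F_x\subseteq\uparrow x$, so every element of $L$ lying in $F_x$ dominates $x$.

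Finally I would prove that $D:=L\cap M$ is directed; together with $D\cap F=L\cap F\neq\emptyset$ this finishes the proof, since $D\subseteq M=\bigcup_{F\in\mathcal F}F$. Given $x,y\in D$, choose $F_x,F_y$ as above and use the hypothesis that $\mathcal F$ is Smyth-directed to pick $F_3\in\mathcal F$ with $F_3\subseteq\uparrow F_x\cap\uparrow F_y$. Take any $z\in L\cap F_3$. Since $z\in F_3\subseteq\uparrow F_x$ there is $a\in F_x$ with $a\leq z$; as $L$ is a lower set and $z\in L$, this forces $a\in L$, hence $a\in L\cap F_x\subseteq\uparrow x$ and so $z\geq a\geq x$. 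Symmetrically $z\geq y$, and $z\in F_3\subseteq M$ gives $z\in D$. Thus $z$ is a common upper bound of $x$ and $y$ in $D$.

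The \emph{main obstacle} is exactly the directedness step, and the reason for working with lower sets rather than with an arbitrary inclusion-minimal set meeting every $F$ is precisely this: Smyth-directedness only supplies a witness $a\in F_x$ below the chosen $z$, and without downward closure there is no reason for that witness to lie in the selected set, so one cannot conclude $z\geq x$. Indeed, a minimal meeting set that is not downward closed can fail to be directed. Insisting that $L$ be a lower set is what forces the witness $a$ back into $L$, and this is the crux of the argument.
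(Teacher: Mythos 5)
Your proof is correct, but note that there is nothing in the paper to compare it against: the lemma is stated as Rudin's Lemma with a citation to \cite{2}, and the paper gives no proof of its own, so your argument stands or falls on its own terms. It stands. The Zorn step is sound: a chain in $(\mathbb{L},\supseteq)$ has its intersection as an upper bound, and the intersection still meets each $F$ because a chain of nonempty subsets of a finite set has only finitely many distinct members, hence a least one, which is the (nonempty) intersection. The minimality step correctly produces, for each $x\in L$, some $F_x\in\mathcal{F}$ with $\emptyset\neq L\cap F_x\subseteq\uparrow x$, and the directedness of $D=L\cap\bigcup_{F\in\mathcal{F}}F$ goes through exactly as you write it: down-closure of $L$ is what pulls the witness $a\in F_x$ with $a\leq z$ back into $L\cap F_x\subseteq\uparrow x$. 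Your closing remark about the crux is not just a heuristic worry but a genuine obstruction, and it is worth recording a witness: in the poset $P=\{x,x',y,y',z\}$ whose only nontrivial relations are $x'\leq z$ and $y'\leq z$, the family $\mathcal{F}=\{\{x,x'\},\{y,y'\},\{z\}\}$ is directed in the Smyth preorder, and $M=\{z,x,y\}$ is an inclusion-minimal set meeting every member of $\mathcal{F}$, yet $x$ and $y$ have no common upper bound in $M$; so inclusion-minimality without down-closure really does fail to give directedness, and your restriction to lower sets is essential rather than cosmetic. Two small points you should add for completeness: $D$ is nonempty (as a directed set must be), since a Smyth-directed family $\mathcal{F}$ is nonempty and $D$ meets each of its members; and in the Zorn step what is needed is that \emph{every} chain of $(\mathbb{L},\supseteq)$ has an upper bound, not only ``descending'' ones --- which is exactly what your intersection argument supplies, since any chain under reverse inclusion is a chain under inclusion.
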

\begin{prop}\label{4.3}
 Every locally
hypercompact space has weak one-step closure.
\end{prop}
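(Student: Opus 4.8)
The statement has the shape $\widehat{A}=cl(A)$ for every $A\subseteq X$, and one inclusion is cheap: if $x\in\widehat{A}$ then some directed $D\subseteq\downarrow A$ converges to $x$, so $x\in cl(D)\subseteq cl(\downarrow A)=cl(A)$, using that $D\subseteq\downarrow A$ and that closed sets are lower sets for the specialization order (whence $cl(\downarrow A)=cl(A)$). Thus the whole content lies in the reverse inclusion $cl(A)\subseteq\widehat{A}$, and my plan is to manufacture, for each $x\in cl(A)$, a directed subset of $\downarrow A$ that converges to $x$. The engine for this is Rudin's Lemma (Lemma \ref{4.2}), fed with a suitably truncated version of the family $fin_{d}(x)$; the identification of locally hypercompact spaces with $d$-quasicontinuous spaces (Theorem \ref{2.11}) is what makes $fin_{d}(x)$ available and convergent to $x$.

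Fix $x\in cl(A)$. Since $X$ is $d$-quasicontinuous, $fin_{d}(x)=\{F\in\mathcal{P}^{w}(X):F\ll_{d}x\}$ is a directed family with $fin_{d}(x)\rightarrow_{\tau}x$ (Definition \ref{2.8}), and each $F\in fin_{d}(x)$ satisfies $x\in\Uparrow_{d}F=(\uparrow F)^{\circ}$ (Theorem \ref{2.9}). First I would show that $F^{A}:=F\cap\downarrow A$ is a nonempty finite subset of $\downarrow A$ for every $F\in fin_{d}(x)$: the open neighbourhood $(\uparrow F)^{\circ}$ of $x$ must meet $A$ because $x\in cl(A)$, so there is $a\in A$ with $a\in\uparrow F$, whence some $f\in F$ has $f\leq a$ and therefore $f\in F\cap\downarrow A$.

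Next I would verify that $\{F^{A}:F\in fin_{d}(x)\}$ remains directed with respect to the Smyth preorder. Given $F_{1},F_{2}\in fin_{d}(x)$, directedness of $fin_{d}(x)$ yields $F_{3}$ with $F_{3}\subseteq\uparrow F_{1}\cap\uparrow F_{2}$; the key elementary observation is that $\downarrow A$ is a lower set, so whenever $y\in F_{3}^{A}$ and $f_{1}\in F_{1}$ with $f_{1}\leq y$, one automatically gets $f_{1}\in\downarrow A$, i.e. $f_{1}\in F_{1}^{A}$. This gives $F_{3}^{A}\subseteq\uparrow F_{1}^{A}\cap\uparrow F_{2}^{A}$, exactly the directedness required. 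This passage from $F$ to $F\cap\downarrow A$ without destroying directedness is the step I expect to be the main obstacle, since truncating a Smyth-directed family need not in general preserve directedness; it works here precisely because we intersect with a lower set.

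With directedness in hand, Rudin's Lemma produces a directed set $D\subseteq\bigcup_{F}F^{A}\subseteq\downarrow A$ that meets every $F^{A}$. It then remains to check $D\rightarrow_{\tau}x$: given an open neighbourhood $U$ of $x$, convergence $fin_{d}(x)\rightarrow_{\tau}x$ supplies $F\in fin_{d}(x)$ with $F\subseteq U$, and any point of $D\cap F^{A}\subseteq D\cap U$ witnesses $D\cap U\neq\emptyset$; since $D$ is directed and open sets are upper, meeting every neighbourhood is exactly $D\rightarrow_{\tau}x$. Hence $x\in\widehat{A}$, which establishes $cl(A)\subseteq\widehat{A}$ and, together with the easy inclusion, the equality $\widehat{A}=cl(A)$.
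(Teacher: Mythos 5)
Your proposal is correct and follows essentially the same route as the paper's own proof: both truncate $fin_{d}(x)$ to the family $\{F\cap\downarrow A: F\in fin_{d}(x)\}$, use $x\in cl(A)$ together with $\Uparrow_{d}F=(\uparrow F)^{\circ}$ to see these sets are nonempty, apply Rudin's Lemma, and conclude that the resulting directed set $D\subseteq\downarrow A$ converges to $x$. The only difference is that you explicitly verify the Smyth-directedness of the truncated family (via $\downarrow A$ being a lower set), a step the paper merely asserts.
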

\begin{proof}
It suffices to show that $cl(A)\subseteq \widehat{A}$ for any subset $A$ of $X$. To this end, let $x\in cl(A)$, $F\in fin_{d}(X)$ with $x\in \Uparrow_{d} F$. Then $\Uparrow_{d} F$ is directed open as $X$ is d-quasicontinuous space by Theorem \ref{2.9} and \ref{2.11}. If $\Uparrow_{d} F\cap A= \emptyset$, then $A\subseteq X\backslash \Uparrow_{d} F$. Thus $cl(A)\subseteq X\backslash \Uparrow_{d} F$, $x\in X\backslash \Uparrow_{d} F$. This will run
contrary to $x\in \Uparrow_{d} F$. Hence $\Uparrow_{d} F\cap A\neq\emptyset$, which implies that
$F\cap \downarrow A\neq \emptyset$. Since $X$ is d-quasicontinuous space, $(F\cap \downarrow A)_{F\in fin_{d}(x)}$
is a directed family (with respective to the Smyth preorder) of nonempty
finite subsets of $X$. By Rudin's Lemma, there exists a directed subset $D$ of $\bigcup_{F\in fin_{d}(x)}F\cap \downarrow A$ such that
$D\cap (F\cap\downarrow A)\neq\emptyset$ for any $F\in fin_{d}(x)$. Also, since $X$ is a d-quasicontinuous space, $fin_{d}(x)\rightarrow_{\tau} x$. That is $D\rightarrow_{\tau} x$ and $x\in \widehat{A}$.
\end{proof}

\begin{prop}\label{4.4}
If a monotone determined space $X$ has one-step closure, then it has weak one-step closure.
\end{prop}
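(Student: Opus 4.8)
The plan is to show that the weak variant $\widehat{A}$ is squeezed between $\widetilde{A}$ and $cl(A)$ for every subset $A\subseteq X$, and then to collapse this chain using the hypothesis of one-step closure, which asserts exactly $\widetilde{A}=cl(A)$.

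First I would record the inclusion $\widetilde{A}\subseteq \widehat{A}$, which holds in any monotone determined space and is immediate from the two definitions: a directed set $D$ witnessing $x\in \widetilde{A}$ satisfies $D\subseteq \downarrow A\cap \downarrow x$, hence in particular $D\subseteq \downarrow A$, so the very same $D$ witnesses $x\in \widehat{A}$. Next I would establish $\widehat{A}\subseteq cl(A)$. Given $x\in \widehat{A}$, choose a directed $D\subseteq \downarrow A$ with $D\rightarrow_{\tau} x$. Since every point of $\downarrow A$ already lies in $cl(A)$ (if $y\leq a\in A$ then $y\in cl\{a\}\subseteq cl(A)$), the net $D$ is contained entirely in the closed set $cl(A)$; as a net lying in a closed set can converge only to points of that set, we get $x\in cl(A)$.

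With these two inclusions in hand, the hypothesis finishes the argument: since $X$ has one-step closure we have $\widetilde{A}=cl(A)$, so the chain reads $cl(A)=\widetilde{A}\subseteq \widehat{A}\subseteq cl(A)$, forcing $\widehat{A}=cl(A)$ for every $A\subseteq X$, which is precisely weak one-step closure. I expect no serious obstacle here; the proof is essentially a monotonicity observation built on the fact that shrinking the defining directed sets from $\downarrow A\cap\downarrow x$ to $\downarrow A$ only enlarges the operator. The single point demanding explicit justification is the inclusion $\widehat{A}\subseteq cl(A)$, which rests on the elementary topological fact that a closed set contains the limits of all its nets, combined with $\downarrow A\subseteq cl(A)$.
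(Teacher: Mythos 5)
Your proof is correct and follows essentially the same route as the paper: both reduce to the chain $cl(A)=\widetilde{A}\subseteq\widehat{A}\subseteq cl(A)$, where the first equality is the one-step closure hypothesis and the middle inclusion comes from observing that a directed set $D\subseteq\downarrow A\cap\downarrow x$ witnessing $x\in\widetilde{A}$ also witnesses $x\in\widehat{A}$. The only difference is that you spell out the justification of $\widehat{A}\subseteq cl(A)$ (via $\downarrow A\subseteq cl(A)$ and closedness under net limits), which the paper dismisses as obvious.
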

\begin{proof}
To prove the above results, we only need to prove that $cl(A)=\widehat{A}$ for any subset $A$ of $X$. From the definition of one-step closure, we have $cl(A)=\widetilde{A}$. One sees obviously that $\widehat{A}\subseteq cl(A)$. Let $x\in cl(A)$. Then $x\in \widetilde{A}$. It follows that
there exists $D\subseteq \downarrow A\cap \downarrow x$ such that $D\rightarrow_{\tau} x$, i.e., $x \in \widehat{A}$. \end{proof}

\begin{prop}\label{4.5}
Let $X$ be a monotone determined space. Then the following statements are equivalent:

(1) $\widetilde{A}$ is a lower set for any $A\subseteq X$;

(2) $\widetilde{D}$ is a lower set for any directed set $D\subseteq X$;
\end{prop}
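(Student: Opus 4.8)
The plan is to dispose of the trivial implication first and concentrate the effort on the converse. The implication $(1)\Rightarrow(2)$ requires no work: a directed subset $D$ of $X$ is in particular a subset of $X$, so if $\widetilde{A}$ is a lower set for every $A\subseteq X$, then it is a lower set for every directed $D$ as a special instance. The content of the proposition therefore lies entirely in $(2)\Rightarrow(1)$.

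For $(2)\Rightarrow(1)$, I would fix an arbitrary $A\subseteq X$, take $x\in\widetilde{A}$ and $y\leq x$, and show $y\in\widetilde{A}$. By definition of $\widetilde{A}$ there is a directed set $D\subseteq\downarrow A\cap\downarrow x$ with $D\rightarrow_{\tau}x$. The crucial move is to reroute through this single directed witness $D$ instead of through $A$: since $D\subseteq\downarrow D\cap\downarrow x$ and $D\rightarrow_{\tau}x$, the set $D$ itself certifies that $x\in\widetilde{D}$. Now hypothesis $(2)$ applies to the directed set $D$, so $\widetilde{D}$ is a lower set; as $y\leq x\in\widetilde{D}$, we conclude $y\in\widetilde{D}$.

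It then remains to transfer this conclusion back from $D$ to $A$. From $y\in\widetilde{D}$ there is a directed set $D'\subseteq\downarrow D\cap\downarrow y$ with $D'\rightarrow_{\tau}y$. Because $D\subseteq\downarrow A$ we have $\downarrow D\subseteq\downarrow\downarrow A=\downarrow A$, whence $D'\subseteq\downarrow A\cap\downarrow y$; combined with $D'\rightarrow_{\tau}y$ this is precisely the defining condition for $y\in\widetilde{A}$. Hence $\widetilde{A}$ is a lower set, establishing the equivalence. I do not expect a genuine obstacle here: the entire argument rests on the observation that membership $x\in\widetilde{A}$ is always certified by one concrete directed set $D$ satisfying $\downarrow D\subseteq\downarrow A$, so the lower-set property for $\widetilde{D}$ first propagates downward to $y$ and then pushes forward to $\widetilde{A}$. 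The only points needing a moment's care are the two inclusions $D\subseteq\downarrow x$ (which makes $D$ a witness for $x\in\widetilde{D}$) and $\downarrow D\subseteq\downarrow A$ (which makes $D'$ a witness for $y\in\widetilde{A}$), both immediate from $D\subseteq\downarrow A\cap\downarrow x$.
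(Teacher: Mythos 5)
Your proof is correct and follows essentially the same route as the paper's: reroute the witness directed set $D$ for $x\in\widetilde{A}$ to certify $x\in\widetilde{D}$, apply hypothesis (2) to pass down to $y\in\widetilde{D}$, and then use $\downarrow D\subseteq\downarrow A$ to transfer the new witness back to $\widetilde{A}$. The only difference is cosmetic (you also spell out the inclusion $\downarrow D\subseteq\downarrow A$, which the paper leaves implicit).
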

\begin{proof}
(1)$\Rightarrow $(2) is trivial.

(2)$\Rightarrow $(1) Assume $x \leq y\in \widetilde{A}$. Then there exists $D\subseteq \downarrow A\cap \downarrow y$ such that $D\rightarrow_{\tau} y$. That is $D\subseteq \downarrow D\cap \downarrow y$ such that $D\rightarrow_{\tau} y$. This means that $y\in \widetilde{D}$. Since $x\leq y$, $x\in \downarrow \widetilde{D}=\widetilde{D}$. So we have that there exists a directed subset $D'$ of $\downarrow D\cap \downarrow x$ such
that $D'\rightarrow_{\tau} x$. Note that $D'\subseteq \downarrow D\cap \downarrow x\subseteq \downarrow A\cap\downarrow x$. Therefore, $x\in \widetilde{A}$.
\end{proof}

\begin{prop}\label{4.6}
Let $X$ be a d-meet continuous space. If $X$ has weak one-step closure, then $X$ has one-step closure.
\end{prop}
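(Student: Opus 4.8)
The plan is to prove only the nontrivial inclusion $cl(A)\subseteq \widetilde{A}$, since $\widetilde{A}\subseteq cl(A)$ holds for every subset $A$ by Proposition \ref{3.5}(1). Because $X$ is assumed to have weak one-step closure, we have $cl(A)=\widehat{A}$, so it suffices to establish $\widehat{A}\subseteq \widetilde{A}$. The essential idea is to feed d-meet continuity into weak one-step closure, applying the latter a second time to an auxiliary set that has already been truncated by $\downarrow x$, so that the resulting witnessing directed set is automatically confined to $\downarrow x$.

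Concretely, I would fix $x\in \widehat{A}$ and unwind Definition \ref{4.1} to obtain a directed set $D\subseteq \downarrow A$ with $D\rightarrow_{\tau}x$. The first key step is to invoke d-meet continuity (Definition \ref{2.13}) on this convergence to conclude $x\in cl(\downarrow D\cap \downarrow x)$. Here I note that $\downarrow D\cap \downarrow x$ is a lower set contained in $\downarrow A\cap\downarrow x$, since $D\subseteq \downarrow A$ gives $\downarrow D\subseteq \downarrow A$; this containment is what will eventually place the witnessing directed set in the correct region.

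The second key step is to apply weak one-step closure once more, now to the set $B=\downarrow D\cap \downarrow x$, giving $cl(B)=\widehat{B}$; since $x\in cl(B)$ we obtain $x\in \widehat{B}$. Unwinding this second application of Definition \ref{4.1} yields a directed set $D'\subseteq \downarrow B = B = \downarrow D\cap\downarrow x$, where I use that $B$ is already a lower set so that $\downarrow B = B$, with $D'\rightarrow_{\tau}x$. Then $D'\subseteq \downarrow D\cap\downarrow x\subseteq \downarrow A\cap\downarrow x$ and $D'\rightarrow_{\tau}x$, which is exactly the witness required for $x\in \widetilde{A}$. This proves $\widehat{A}\subseteq \widetilde{A}$, and combined with $\widetilde{A}\subseteq cl(A)=\widehat{A}$ it gives $\widetilde{A}=cl(A)$, i.e. $X$ has one-step closure.

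The step I expect to carry the real weight is the interplay between the two hypotheses: weak one-step closure on its own only guarantees a directed set inside $\downarrow A$, with no control over $\downarrow x$, so it does not directly yield membership in $\widetilde{A}$. The trick for upgrading to a directed set inside $\downarrow A\cap\downarrow x$ is precisely to use d-meet continuity first, landing $x$ in the closure of the truncated set $\downarrow D\cap\downarrow x$, and only then to apply weak one-step closure to that truncated set. I would double-check that d-meet continuity (equivalently Lemma \ref{2.14}) is being applied to the directed set $D$ at hand and that $\downarrow D\cap\downarrow x$ is genuinely a lower set, so that $\downarrow B=B$ and the final directed set $D'$ indeed sits inside $\downarrow x$.
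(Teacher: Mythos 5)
Your proposal is correct and follows essentially the same route as the paper's own proof: apply weak one-step closure to get $D\subseteq\downarrow A$ with $D\rightarrow_{\tau}x$, use d-meet continuity to land $x\in cl(\downarrow D\cap\downarrow x)$, then apply weak one-step closure a second time to the truncated lower set to extract $D'\subseteq\downarrow A\cap\downarrow x$ witnessing $x\in\widetilde{A}$. Your explicit check that $\downarrow D\cap\downarrow x$ is a lower set (so $\downarrow B=B$) is a detail the paper leaves implicit, but the argument is identical.
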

\begin{proof}
If $X$ has weak one-step closure, then $\widehat{A}=cl(A)$. Let $x\in cl(A)=\widehat{A}$. Thus there exists directed set $D\subseteq \downarrow A$, $D\rightarrow_{\tau} x$. Since $X$ is d-meet continuous space, $x\in cl(\downarrow D \cap\downarrow x)$. That is $x\in\widehat{\downarrow D \cap\downarrow x}$, then there exist $D'\subseteq \downarrow D \cap\downarrow x$ such that $D'\rightarrow_{\tau} x$. Since $D'\subseteq \downarrow D \cap\downarrow x\subseteq\downarrow A \cap\downarrow x$, $x\in \widetilde{A}$. That show $X$ has one-step closure.
\end{proof}

The main result of this section is given below.
\begin{thm}\label{4.7}
Let $X$ be a monotone determined space. Then $X$ is d-meet continuous space and has one-step closure if and only if $X$ has one-step closure.
\end{thm}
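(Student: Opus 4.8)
The plan is to derive this equivalence by assembling the three propositions already established in this section and the previous one, namely Proposition~\ref{3.8}, Proposition~\ref{4.4}, and Proposition~\ref{4.6}. The theorem is essentially a repackaging of these facts into a single characterization, so once they are in hand the argument is immediate; the content announced in the introduction is that one-step closure is equivalent to the conjunction of d-meet continuity and weak one-step closure.

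First I would treat the implication that one-step closure forces both d-meet continuity and weak one-step closure. Assuming $X$ has one-step closure, Proposition~\ref{3.8} shows that $X$ is d-meet continuous, since every monotone determined space with one-step closure is d-meet continuous, and Proposition~\ref{4.4} shows that $X$ has weak one-step closure, since one-step closure implies weak one-step closure. Hence both properties hold simultaneously.

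For the converse, I would assume $X$ is d-meet continuous and has weak one-step closure and then invoke Proposition~\ref{4.6} verbatim, which asserts precisely that a d-meet continuous space with weak one-step closure has one-step closure. This closes the equivalence with no further work.

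The hard part has already been absorbed into Proposition~\ref{4.6}: there d-meet continuity is exactly the ingredient that upgrades a directed set $D \subseteq \downarrow A$ witnessing $x \in \widehat{A}$ into a directed set contained in $\downarrow A \cap \downarrow x$ witnessing $x \in \widetilde{A}$, so that the weak closure operator $\widehat{\,\cdot\,}$ and the closure operator $\widetilde{\,\cdot\,}$ coincide. Consequently I expect no genuine obstacle at the level of the theorem itself; its role is simply to consolidate the section's propositions into the clean characterization stated in Section~\ref{sec1}.
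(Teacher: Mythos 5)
Your proposal is correct and follows exactly the paper's own proof: the forward implication is Proposition~\ref{4.6}, and the reverse is the combination of Propositions~\ref{3.8} and~\ref{4.4}. You also correctly read the theorem's hypothesis as ``d-meet continuous and has \emph{weak} one-step closure'' (the printed statement contains a typo, as is clear from the introduction and from the propositions the proof invokes), which is precisely how the paper's argument treats it.
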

\begin{proof}
($\Rightarrow$) It follows immediately from Proposition \ref{4.6}.

($\Leftarrow$) If $X$ has one-step closure, then $X$ is d-meet continuous space and has one-step closure by Proposition \ref{3.8} and \ref{4.4}
\end{proof}

\section{Ideal $\mathcal{S}$-convergence and c-space}\label{sec5}

It is known that $\mathcal{S}$-convergence in a dcpo is topological if and only if the dcpo is a continuous domain\cite{8}. In \cite{23}, the notion of ideal-lim-inf-convergence was introduced and it was shown that ideal-lim-inf-convergence in a
poset is topological if and only if the poset is a continuous poset.

In this section, We will define the notion of ideal $\mathcal{S}$-convergence on a topological spaces $X$. We show that ideal $\mathcal{S}$-convergence
is topological iff $X$ is a c-space. On the other hand, we gain the convergence theoretical characterizations of d-continuous
spaces. These theorems for ideal-lim-inf-convergence \cite{23} can be viewed as special
cases of ideal $\mathcal{S}$-convergence.

Firstly, we recall the following definition.

\begin{defn}\label{5.1} \cite{23}
A net $(x_{j})_{j\in J}$ in a poset $L$ is said to $\mathcal{I}$-lim-inf-converge to an element $x\in L$, where $\mathcal{I}$ is an ideal of $J$, if there exists a directed subset $D$ of $L$ such that the following conditions hold: (1) $\vee D$ exists with $x\leq \vee D$, and (2) for any $d\in D$, $\{j\in J: x_{j}\ngeq d\}\in \mathcal{I}$. In this case the point $x$ is called the $\mathcal{I}$-lim-inf-limit of the net $(x_{j})_{j\in J}$.
\end{defn}

For any set $X$, we denote $\Phi(X)$ to be the class of $((x_{j})_{j\in J}, \mathcal{I})$ in $X$, where $(x_{j})_{j\in J}$ is a net and $\mathcal{I}$ is a ideal of $J$. A ideal convergence class $\mathcal{L}$ in $X$ is a relation between $\Phi(X)$ and $X$, i.e., $\mathcal{L}$ is a subclass of $\{((x_{j})_{j\in J}, \mathcal{I}, x): (x_{j})_{j\in J}$ is a net in $X$, $\mathcal{I}$ is a ideal of $J$ and $x\in X\}$. We say that a convergence class $\mathcal{L}$ in a set $X$ is topological if there is a topology $\tau$ on $X$ such that  $((x_{j})_{j\in J}, \mathcal{I}, x)\in \mathcal{L}\Leftrightarrow(x_{j})_{j\in J}\rightarrow _{\mathcal{I}}x$.

\begin{thm}\label{5.2} \cite{23}
For a poset $L$, the ideal-lim-inf-convergence is topological if and only if $L$ is a continuous poset.
\end{thm}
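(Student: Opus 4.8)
The plan is to prove both directions by comparing $\mathcal{I}$-lim-inf-convergence with $\mathcal{I}$-convergence in the Scott topology $\sigma(L)$. For sufficiency, assume $L$ is continuous; I will show the two convergences agree for every net and every ideal, which makes the lim-inf-convergence topological by definition. One inclusion is purely formal and uses no continuity: if $(x_{j})_{j\in J}$ lim-inf-converges to $x$ via a directed $D$ with $x\leq\vee D$, then for any Scott-open $U\ni x$ upperness gives $\vee D\in U$, Scott-openness gives some $d\in D\cap U$, and $\uparrow d\subseteq U$ yields $\{j: x_{j}\notin U\}\subseteq\{j: x_{j}\ngeq d\}\in\mathcal{I}$, so $(x_{j})_{j\in J}\rightarrow_{\mathcal{I}}x$ in $\sigma(L)$. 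For the reverse inclusion I will use continuity: put $D=\{a\in L: a\ll_{s} x\}$, which is directed with $\vee D=x$, and note that each $\{y: a\ll_{s} y\}$ is Scott-open (by the interpolation property) and contains $x$; hence $\mathcal{I}$-convergence in $\sigma(L)$ forces $\{j: x_{j}\ngeq a\}\subseteq\{j: x_{j}\notin\{y: a\ll_{s} y\}\}\in\mathcal{I}$ for every $a\in D$, exhibiting $D$ as a lim-inf witness.

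For necessity, suppose the convergence is topological, witnessed by a topology $\tau$, and first recover the order and locate $\tau$. Evaluating the convergence on constant nets shows that $x\leq y$ holds exactly when the constant net at $y$ lim-inf-converges to $x$; comparing this with $\mathcal{I}$-convergence in $\tau$ shows the specialization order of $\tau$ equals $\leq$, so every $\tau$-open set is an upper set. Next, regarding a directed set $D$ with an existing supremum as a net indexed by itself and carrying the admissible ideal $\mathcal{I}_{0}(D)$, one checks that $D$ lim-inf-converges to $\vee D$ by taking $D$ itself as the witness, since $\{e\in D: e\ngeq d\}=D\setminus M_{d}\in\mathcal{I}_{0}(D)$. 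Topologicality then forces $D\rightarrow_{\mathcal{I}_{0}}\vee D$, which equals $D\rightarrow_{\tau}\vee D$ by the fact recalled after Definition \ref{2.16}; consequently every $\tau$-open set is Scott-open, i.e. $\tau\subseteq\sigma(L)$.

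The heart of the argument is producing, for each $x\in L$, a directed family of way-below elements with supremum $x$. I will build the canonical net indexed by $J=\{(U,a): x\in U\in\tau,\ a\in U\}$, ordered by $(U,a)\leq(V,b)\Leftrightarrow V\subseteq U$, with value $x_{(U,a)}=a$; this net $\tau$-converges to $x$, hence $\mathcal{I}_{0}(J)$-converges to $x$, and so by topologicality it lim-inf-converges to $x$. Unwinding the witness $D$, for each $d\in D$ the membership $\{(U,a): a\ngeq d\}\in\mathcal{I}_{0}(J)$ yields a neighborhood $U_{0}$ of $x$ with $U_{0}\subseteq\uparrow d$; since $U_{0}$ is Scott-open this says precisely $d\ll_{s} x$. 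Thus $D\subseteq\{a: a\ll_{s} x\}\subseteq\downarrow x$ together with $x\leq\vee D$ forces $\vee D=x$, and a routine directedness argument (using that any $a\ll_{s} x$ lies below some member of the directed $D$) upgrades this to ``$\{a: a\ll_{s} x\}$ is directed with supremum $x$'', i.e. $L$ is continuous.

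I expect the main obstacle to be the necessity direction, specifically the translation $\{(U,a): a\ngeq d\}\in\mathcal{I}_{0}(J)\Rightarrow\exists\,$Scott-open $U_{0}\ni x$ with $U_{0}\subseteq\uparrow d$. This hinges on the exact cofinal structure of $J$: one must verify that being eventually $\geq d$ along this net is equivalent to $d$ bounding a whole open neighborhood of $x$, and this is exactly where pairing each neighborhood with all of its points pays off. A secondary point requiring care is that $L$ is only a poset, so suprema need not exist; throughout I must keep the existence of $\vee D$ explicit and invoke it only where the lim-inf definition supplies it.
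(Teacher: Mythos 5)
Your proof is correct, but note that the paper never actually proves Theorem \ref{5.2}: it is quoted from \cite{23} purely as background, and the paper's own contribution is the generalization to monotone determined spaces (Theorem \ref{5.11} and Corollary \ref{5.12}). What you have written is, in effect, that proof specialized to a poset carrying its Scott topology, and it has the same architecture: sufficiency by showing that $\mathcal{I}$-lim-inf-convergence coincides with $\mathcal{I}$-convergence (your witnesses $\{a\in L: a\ll_{s}x\}$ and the Scott-open sets $\{y\in L: a\ll_{s}y\}$ play the roles of the paper's $\Da_{d}x$ and $\Ua_{d}a$ from Corollary \ref{5.10}), and necessity via the canonical net indexed by pairs (neighborhood, point), with membership in $\mathcal{I}_{0}$ translated into eventual behaviour of the net --- which is exactly the paper's Lemma \ref{2.15}, used in the same way inside the proof of Theorem \ref{5.11} to extract a neighborhood $U_{0}$ of $x$ with $U_{0}\subseteq\,\uparrow d$. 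The differences are only those forced by the change of setting: (i) your index set drops the natural-number component that the paper carries in $I=\{(U,n,a)\}$ (it is inessential there as well, since the order already ignores the point component); (ii) because you must produce the topology rather than receive a monotone determined space, you need the two preliminary steps --- recovering the order from constant nets, and showing $\tau\subseteq\sigma(L)$ by viewing directed sets with suprema as nets --- which in the paper are replaced by Corollary \ref{5.9} ($\tau=\mathcal{D}(X)=\mathcal{IS}(X)$); and (iii) your conclusion must be $\vee\{a: a\ll_{s}x\}=x$, obtained from $\vee D=x$ plus the routine directedness upgrade, whereas the paper concludes with $D\subseteq\Da_{d}x$, $D\rightarrow_{\tau}x$ and Theorem \ref{2.5}. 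The two points you flag as delicate (the cofinality translation and keeping existence of suprema explicit) are both handled correctly, so there is no gap.
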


\begin{defn}\label{5.3}
 Let $X$ be a $T_{0}$ topological space. A net $(x_{j})_{j\in J}\subseteq X$ is said to $\mathcal{IS}$-converge
to a point $x\in X$, where $\mathcal{I}$ is an ideal of $J$, if there exists a directed subset $D$ of $X$ such that $D\rightarrow_{\tau} x$ and for any $d\in D$, $\{j\in J: x_{j}\ngeq d\}\in \mathcal{I}$. In this case the point $x$ is called the $\mathcal{IS}$-limit of the net $(x_{j})_{j\in J}$ and we write $(x_{j})_{j\in J}\rightarrow _{\mathcal{IS}}x$.
\end{defn}

It is easy to verify that for any $x, y\in X$, $x\leq y\Leftrightarrow \{y\}\rightarrow_{\mathcal{IS}} x$ for trivial ideal.

\begin{lem}\label{5.4}
 Let $X$ be a $T_{0}$ topological space, $x\in X$ and $(x_{j})_{j\in J}\subseteq X$ be a net. Then $(x_{j})_{j\in J}\rightarrow _{\mathcal{IS}}x\Rightarrow (x_{j})_{j\in J} \rightarrow _{\mathcal{I}}x$ with respect to $\tau$. In particular, for any directed subset $D$ of $X$, $D\rightarrow _{\mathcal{I}_{0}\mathcal{S}}x\Leftrightarrow D \rightarrow _{\tau}x$.
\end{lem}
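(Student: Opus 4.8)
The plan is to prove the general implication first and then derive the ``in particular'' equivalence from it, together with the admissibility of $\mathcal{I}_{0}$.

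For the implication $(x_{j})_{j\in J}\rightarrow_{\mathcal{IS}}x \Rightarrow (x_{j})_{j\in J}\rightarrow_{\mathcal{I}}x$, I would fix a directed set $D$ witnessing the $\mathcal{IS}$-convergence, so that $D\rightarrow_{\tau}x$ and $\{j\in J: x_{j}\ngeq d\}\in\mathcal{I}$ for every $d\in D$. Now let $U$ be an arbitrary open neighbourhood of $x$. Since $D\rightarrow_{\tau}x$, the net $D$ is eventually in $U$; in particular $D\cap U\neq\emptyset$, so I may choose some $d_{0}\in D\cap U$. The key observation is that $U$, being open, is an upper set for the specialization order (if $x_{j}\geq d_{0}$ and $d_{0}\in U$, then $x_{j}\in U$). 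Contrapositively, $x_{j}\notin U$ forces $x_{j}\ngeq d_{0}$, giving the inclusion $\{j\in J: x_{j}\notin U\}\subseteq\{j\in J: x_{j}\ngeq d_{0}\}$. The right-hand set lies in $\mathcal{I}$ by the choice of $d_{0}\in D$, and since $\mathcal{I}$ is downward closed, the left-hand set lies in $\mathcal{I}$ as well. As $U$ was arbitrary, this is exactly $(x_{j})_{j\in J}\rightarrow_{\mathcal{I}}x$.

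For the equivalence $D\rightarrow_{\mathcal{I}_{0}\mathcal{S}}x\Leftrightarrow D\rightarrow_{\tau}x$ for a directed subset $D$ (viewed as a net indexed by itself), I would argue the two directions separately. For the direction $(\Leftarrow)$, assuming $D\rightarrow_{\tau}x$, I take $D$ itself as the witnessing directed set in Definition \ref{5.3}: for each $d'\in D$ one has $\{d\in D: d\ngeq d'\}=D\backslash M_{d'}$, which belongs to $\mathcal{I}_{0}=\mathcal{I}_{0}(D)$ by the admissibility of $\mathcal{I}_{0}$ recalled before Lemma \ref{2.15}; hence $D\rightarrow_{\mathcal{I}_{0}\mathcal{S}}x$. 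For the direction $(\Rightarrow)$, the first part of the lemma already gives $D\rightarrow_{\mathcal{I}_{0}}x$, and the cited characterization $D\rightarrow_{\mathcal{I}_{0}}x\Leftrightarrow D\rightarrow_{\tau}x$ (the remark following Definition \ref{2.16}) then yields $D\rightarrow_{\tau}x$.

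There is no serious obstacle here; the proof is essentially a matter of unwinding the two convergence definitions. The only point requiring a little care is the first paragraph's reduction of an arbitrary open neighbourhood $U$ to a single element $d_{0}\in D\cap U$, which relies on two facts that must be invoked explicitly: that $D\rightarrow_{\tau}x$ supplies such a $d_{0}$, and that openness of $U$ makes it an upper set, so that $x_{j}\geq d_{0}$ propagates membership into $U$. Once these are in place, the ideal axioms (downward closure for the general implication, admissibility of $\mathcal{I}_{0}$ for the converse) do the rest.
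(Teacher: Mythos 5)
Your proposal is correct and follows essentially the same route as the paper: choose $d_{0}\in D\cap U$ (using $D\rightarrow_{\tau}x$), use that the open set $U$ is an upper set to get $\{j\in J: x_{j}\notin U\}\subseteq\{j\in J: x_{j}\ngeq d_{0}\}\in\mathcal{I}$, and then settle the $\mathcal{I}_{0}$ case from the definition of $\mathcal{I}_{0}$. In fact your treatment of the ``in particular'' equivalence (taking $D$ as its own witness and noting $\{d\in D: d\ngeq d'\}=D\backslash M_{d'}\in\mathcal{I}_{0}$, plus the cited fact that $\mathcal{I}_{0}$-convergence coincides with topological convergence) spells out details the paper compresses into the phrase ``by definition of $\mathcal{I}_{0}$.''
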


\begin{proof} If $(x_{j})_{j\in J}\rightarrow _{\mathcal{IS}}x$, then there exists a directed subset $D$ of $X$ such that $D\rightarrow_{\tau} x$ and for any $d\in D$, $\{j\in J: x_{j}\ngeq d\}\in \mathcal{I}$. Let $x\in U\in\tau$, then $U\in \mathcal{D}(x)$ and $D\cap U\neq \emptyset$. Let $d\in D\cap U$, then $\{j\in J: x_{j}\notin U\}\subseteq \{j\in J: x_{j}\ngeq d\}\in \mathcal{I}$, that is, $\{j\in J: x_{j}\notin U\}\in \mathcal{I}$. Thus $(x_{j})_{j\in J}\rightarrow _{\mathcal{I}}x$ with respect to $\tau$ by Definition \ref{2.16}.

In particular, $D\rightarrow _{\mathcal{I}_{0}\mathcal{S}}x\Leftrightarrow D \rightarrow _{\tau}x$ be definition of $\mathcal{I}_{0}$.
\end{proof}
\begin{prop}\label{5.5}
  Let $X$ be a monotone determined space and $x, y\in X$. Then $x\ll_{d} y$ if and only if for any net $(x_{j})_{j\in J}$ in $X$ and any non-trivial ideal $\mathcal{I}$ of $J$ such that $(x_{j})_{j\in J}\rightarrow _{\mathcal{IS}}y$, we have $\{j\in J: x_{j}\ngeq x\}\in \mathcal{I}$.
\end{prop}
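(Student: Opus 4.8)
The plan is to prove both implications directly from the definitions, treating the two directions asymmetrically since the forward direction exploits a convenient choice of directed set while the reverse direction relies on the power of quantifying over all nets and ideals.

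For the forward direction, I would assume $x\ll_{d} y$ and consider an arbitrary net $(x_{j})_{j\in J}$ together with a non-trivial ideal $\mathcal{I}$ of $J$ such that $(x_{j})_{j\in J}\rightarrow_{\mathcal{IS}} y$. By Definition~\ref{5.3}, there exists a directed subset $D$ of $X$ with $D\rightarrow_{\tau} y$ and $\{j\in J: x_{j}\ngeq d\}\in\mathcal{I}$ for every $d\in D$. Since $x\ll_{d} y$ and $D\rightarrow_{\tau} y$, Definition~\ref{2.3} furnishes some $d_{0}\in D$ with $x\leq d_{0}$. The key observation is then the set inclusion $\{j\in J: x_{j}\ngeq x\}\subseteq\{j\in J: x_{j}\ngeq d_{0}\}$: indeed, if $x_{j}\geq d_{0}$ then $x_{j}\geq d_{0}\geq x$, so $x_{j}\ngeq x$ forces $x_{j}\ngeq d_{0}$. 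Since the larger set lies in $\mathcal{I}$ and ideals are downward closed under $\subseteq$, the smaller set $\{j\in J: x_{j}\ngeq x\}$ lies in $\mathcal{I}$, which is exactly what we want.

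For the reverse direction, I would argue that the condition on nets and ideals, when specialized to directed subsets of $X$ regarded as nets indexed by themselves and equipped with the canonical admissible ideal $\mathcal{I}_{0}$, recovers precisely the defining property of $\ll_{d}$. Concretely, suppose the stated net condition holds, and let $D$ be any directed subset of $X$ with $D\rightarrow_{\tau} y$. Regarding $D$ as a net indexed by itself, Lemma~\ref{5.4} gives $D\rightarrow_{\mathcal{I}_{0}\mathcal{S}} y$, so by hypothesis (with $\mathcal{I}=\mathcal{I}_{0}(D)$) we obtain $\{d\in D: d\ngeq x\}\in\mathcal{I}_{0}(D)$. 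By the definition of the admissible ideal $\mathcal{I}_{0}$, this membership means $\{d\in D: d\ngeq x\}\subseteq D\backslash M_{d_{0}}$ for some $d_{0}\in D$, and hence $M_{d_{0}}\subseteq\{d\in D: d\geq x\}$. In particular $d_{0}$ itself satisfies $d_{0}\geq x$, so there is a member of $D$ above $x$. Since $D$ was an arbitrary directed set converging to $y$, this is exactly the statement that $x\ll_{d} y$.

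I expect the main subtlety to lie in the reverse direction, specifically in correctly invoking the admissible ideal $\mathcal{I}_{0}(D)$ and unpacking its membership condition to extract a single element $d_{0}$ with $x\leq d_{0}$. The hypothesis only asserts $\{d\in D: d\ngeq x\}\in\mathcal{I}$ for non-trivial ideals, so I must be careful that $\mathcal{I}_{0}(D)$ is indeed non-trivial and $D$-admissible, which is guaranteed by the discussion preceding Lemma~\ref{2.15}; once that is in place, the translation from ideal membership back to the existence of a witness $d_{0}\geq x$ is the crux, and it uses only the cofinality structure encoded in the sets $M_{d}$.
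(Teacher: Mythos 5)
Your proposal is correct and follows essentially the same route as the paper: the forward direction extracts $d_{0}\in D$ with $x\leq d_{0}$ from $x\ll_{d}y$ and uses downward closure of the ideal, and the reverse direction specializes the hypothesis to a directed set $D\rightarrow_{\tau}y$ viewed as a net indexed by itself with the admissible ideal $\mathcal{I}_{0}(D)$. The only (cosmetic) difference is in the last step: the paper concludes from non-triviality that $\{d\in D: d\ngeq x\}\neq D$, whereas you unpack the definition of $\mathcal{I}_{0}$ via the sets $M_{d}$ to exhibit the witness $d_{0}\geq x$ explicitly; both are valid.
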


\begin{proof} Let $x\ll_{d} y$, $(x_{j})_{j\in J}$ be a net in $X$ and $\mathcal{I}$ be a non-trivial ideal of $J$ such that $(x_{j})_{j\in J}\rightarrow _{\mathcal{IS}}x$. Then there exists a directed subset $D$ of $X$ such that $D\rightarrow_{\tau}x$ and for any $d\in D$, $\{j\in J: x_{j}\ngeq d\}\in \mathcal{I}$. Since $x\ll_{d} y$, there exists $d_{0}\in D$ such that $x\leq d_{0}$. Then $\{j\in J: x_{j}\ngeq x\}\subseteq\{j\in J: x_{j}\ngeq d_{0}\}\in \mathcal{I}$. Hence, $\{j\in J: x_{j}\ngeq x\}\in \mathcal{I}$.

Conversely, suppose that for any net $(x_{j})_{j\in J}$ in $X$ and any non-trivial ideal $\mathcal{I}$ of $J$ such that $(x_{j})_{j\in J}\rightarrow _{\mathcal{IS}}x$, we have $\{j\in J: x_{j}\ngeq x\}\in \mathcal{I}$. Let $D$ be a directed subset of $X$ such that $D\rightarrow_{\tau} y$. Consider the net $(x_{d})_{d\in D}$, where $d_{d}=d$, for any $d\in D$. Then $(x_{d})_{d\in D}\rightarrow_{\mathcal{I}_{0}\mathcal{S}}y$ and therefore, by hypothesis, $\{d\in D: x_{d}\ngeq x\}\in \mathcal{I}_{0}$. Since $\mathcal{I}_{0}$ is non-trivial, $\{d\in D: x_{d}\ngeq x\}\neq D$. Hence, there exists $x_{d}=d\in D$ such that $d\geq x$. Thus, $x\ll_{d} y$ by Definition \ref{2.3}.
\end{proof}

\begin{prop} \label{5.6} Let $X$ be a c-space, $x, y\in X$, $(x_{j})_{j\in J}\subseteq X$ be a net. Then $(x_{j})_{j\in J}\rightarrow _{\mathcal{IS}}y$, where $\mathcal{I}$ is a non-trivial ideal of $J$, if and only if for any $x\ll_{d} y$, we have $\{j\in J: x_{j}\ngeq x\}\in \mathcal{I}$.
\end{prop}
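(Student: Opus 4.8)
The plan is to establish the two implications separately, observing at the outset that the forward direction is essentially already contained in Proposition \ref{5.5}, whereas the reverse direction is where the c-space hypothesis does the real work.

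For the forward implication ($\Rightarrow$), suppose $(x_{j})_{j\in J}\rightarrow_{\mathcal{IS}}y$ and fix any $x\ll_{d}y$. Unwinding Definition \ref{5.3}, there is a directed $D\subseteq X$ with $D\rightarrow_{\tau}y$ and $\{j\in J: x_{j}\ngeq d\}\in\mathcal{I}$ for every $d\in D$. Since $x\ll_{d}y$ and $D\rightarrow_{\tau}y$, Definition \ref{2.3} yields some $d_{0}\in D$ with $x\leq d_{0}$, whence $\{j\in J: x_{j}\ngeq x\}\subseteq\{j\in J: x_{j}\ngeq d_{0}\}\in\mathcal{I}$; downward closure of the ideal then gives $\{j\in J: x_{j}\ngeq x\}\in\mathcal{I}$. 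This step is exactly the forward half of Proposition \ref{5.5} and uses no special property of $X$ beyond its being monotone determined.

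For the reverse implication ($\Leftarrow$), the idea is to construct the witnessing directed set demanded by Definition \ref{5.3} directly out of the d-way-below relation. Since $X$ is a c-space, it is d-continuous by Theorem \ref{2.5}, so by Definition \ref{2.4} the set $\Da_{d}y$ is directed and $\Da_{d}y\rightarrow_{\tau}y$. I would therefore take $D:=\Da_{d}y$. Every $d\in D$ satisfies $d\ll_{d}y$, so the hypothesis applies and gives $\{j\in J: x_{j}\ngeq d\}\in\mathcal{I}$. Thus $D$ is a directed set converging to $y$ for which the index-set condition holds at each of its members, which is precisely what Definition \ref{5.3} requires; hence $(x_{j})_{j\in J}\rightarrow_{\mathcal{IS}}y$.

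The one place where the argument genuinely needs the hypothesis is this choice of $D$. In an arbitrary monotone determined space there is no reason for $\Da_{d}y$ to be directed or to converge to $y$, so no canonical candidate for the witnessing directed set would be available. The c-space assumption, via Theorem \ref{2.5} and Definition \ref{2.4}, supplies exactly the directed convergent set $\Da_{d}y$ assembled from the family $\{x: x\ll_{d}y\}$, and once this is in hand the verification is immediate. Accordingly I expect the entire substance of the proof to reside in recognizing $\Da_{d}y$ as the correct $D$.
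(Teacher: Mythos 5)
Your proposal is correct and takes essentially the same approach as the paper: the forward direction is precisely the forward half of Proposition \ref{5.5} (which you re-derive rather than cite), and the reverse direction chooses $D=\Da_{d}y$, using the c-space hypothesis via Theorem \ref{2.5} to guarantee that $\Da_{d}y$ is directed and $\Da_{d}y\rightarrow_{\tau}y$, exactly as in the paper's proof.
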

\begin{proof} Suppose that $(x_{j})_{j\in J}\rightarrow _{\mathcal{IS}}y$ and $x\ll_{d} y$. Then, by Proposition \ref{5.5}, we have $\{j\in J: x_{j}\ngeq x\}\in \mathcal{I}$.

Conversely, let $(x_{j})_{j\in J}$ be a net in $X$ and for any $x\ll_{d} y$ we have $\{j\in J: x_{j}\ngeq x\}\in \mathcal{I}$, where $\mathcal{I}$ is a non-trivial ideal of $J$. The set $\Da_{d} y$ is directed and $\Da_{d} y\rightarrow_{\tau}y$. By hypothesis and Definition \ref{5.3}, it follows that $(x_{j})_{j\in J}\rightarrow _{\mathcal{IS}}y$.
\end{proof}

We define a convergence class $\mathcal{L}_{\mathcal{IS}}$ on $X$ as follows: $((x_{j})_{j\in J},\mathcal{I}, x)\in \mathcal{L}_{\mathcal{IS}}$ iff $(x_{j})_{j\in J}\rightarrow _{\mathcal{IS}}x$, where $(x_{j})_{j\in J}$ is a net in $X$, $x\in X$ and $\mathcal{I}$ is an ideal of $J$. Let

\begin{center}
$\mathcal{IS}(X)=\{U\subseteq X:$ for any $((x_{j})_{j\in J}, \mathcal{I}, x)\in \mathcal{L}_{\mathcal{IS}}$, $x\in U$ implies $\{j\in J: x_{j}\notin U\}\in \mathcal{I}\}$.
\end{center}

\begin{prop}\label{5.7}
Let $X$ be a $T_{0}$ topological space, then $(X,\mathcal{IS}(X))$ is a topological space.
\end{prop}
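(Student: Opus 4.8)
The plan is to verify the three Kuratowski-type axioms that characterize the family of open sets of a topology: $\emptyset, X \in \mathcal{IS}(X)$; arbitrary unions of members of $\mathcal{IS}(X)$ lie in $\mathcal{IS}(X)$; and finite intersections of members lie in $\mathcal{IS}(X)$. The membership condition for $U \in \mathcal{IS}(X)$ is that whenever $((x_j)_{j\in J}, \mathcal{I}, x) \in \mathcal{L}_{\mathcal{IS}}$ and $x \in U$, we have $\{j \in J : x_j \notin U\} \in \mathcal{I}$, so each axiom reduces to a manipulation of the index sets $\{j : x_j \notin U\}$ together with the defining closure properties of an ideal.

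First I would dispose of the trivial cases: $X \in \mathcal{IS}(X)$ because $\{j : x_j \notin X\} = \emptyset$, which lies in every ideal; and $\emptyset \in \mathcal{IS}(X)$ vacuously, since the hypothesis $x \in \emptyset$ is never met. Next I would treat arbitrary unions. Suppose $\{U_k\}_{k \in K} \subseteq \mathcal{IS}(X)$, set $U = \bigcup_{k} U_k$, and take $((x_j)_{j\in J}, \mathcal{I}, x) \in \mathcal{L}_{\mathcal{IS}}$ with $x \in U$. Then $x \in U_{k_0}$ for some $k_0$, so by the hypothesis on $U_{k_0}$ we get $\{j : x_j \notin U_{k_0}\} \in \mathcal{I}$. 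Since $U_{k_0} \subseteq U$ gives the inclusion $\{j : x_j \notin U\} \subseteq \{j : x_j \notin U_{k_0}\}$, the downward-closure property of the ideal $\mathcal{I}$ forces $\{j : x_j \notin U\} \in \mathcal{I}$, as required.

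The finite-intersection axiom is where the real work lies, and I expect it to be the main obstacle. For $U_1, U_2 \in \mathcal{IS}(X)$ and $((x_j)_{j\in J}, \mathcal{I}, x) \in \mathcal{L}_{\mathcal{IS}}$ with $x \in U_1 \cap U_2$, both $\{j : x_j \notin U_1\}$ and $\{j : x_j \notin U_2\}$ lie in $\mathcal{I}$. The key set identity is $\{j : x_j \notin U_1 \cap U_2\} = \{j : x_j \notin U_1\} \cup \{j : x_j \notin U_2\}$, and since an ideal is closed under finite unions, this union lies in $\mathcal{I}$. The subtlety is that in order to apply the membership hypothesis for \emph{both} $U_1$ and $U_2$ to the \emph{same} convergence datum $((x_j)_{j\in J}, \mathcal{I}, x)$, one must know that $x \in U_1 \cap U_2$ entails $x \in U_1$ and $x \in U_2$ separately, which is immediate; the only genuine content is the ideal's closure under binary union, after which induction extends the conclusion to any finite intersection.

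In carrying this out I would be careful that the ideal $\mathcal{I}$ appearing in the datum is fixed throughout each verification, so that the ideal axioms (downward closure for unions, finite-union closure for intersections) are being applied within one and the same $\mathcal{I}$; no compatibility across different ideals is ever needed. With all three axioms verified, $\mathcal{IS}(X)$ is closed under arbitrary unions and finite intersections and contains $\emptyset$ and $X$, hence is a topology, and $(X, \mathcal{IS}(X))$ is a topological space.
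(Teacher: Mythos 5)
Your proposal is correct and follows essentially the same route as the paper: both verify that $\mathcal{IS}(X)$ contains $\emptyset$ and $X$, is closed under arbitrary unions via the inclusion $\{j : x_j \notin \bigcup_k U_k\} \subseteq \{j : x_j \notin U_{k_0}\}$ and downward closure of $\mathcal{I}$, and is closed under binary intersections via the identity $\{j : x_j \notin U_1 \cap U_2\} = \{j : x_j \notin U_1\} \cup \{j : x_j \notin U_2\}$ and the ideal's closure under finite unions. Your vacuous argument for $\emptyset \in \mathcal{IS}(X)$ is in fact cleaner than the paper's remark about the trivial ideal, but the substance is identical.
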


\begin{proof} Since $\emptyset\in \mathcal{I}$, where $\mathcal{I}$ is an ideal of $J$, we have $X\in \mathcal{IS}(X)$. Let $\mathcal{I}$ is an ideal of $J$ and $J\in \mathcal{I}$, then $\emptyset \in \mathcal{IS}(X)$.

If $\{U_{k}\}_{k\in K}$ is a nonempty subfamily of $\mathcal{IS}(X)$ and $(x_{j})_{j\in J}\rightarrow _{\mathcal{IS}}x\in \bigcup_{K\in K}U_{k}$, then there exists $k_{0}\in K$ such that $x\in U_{k}$. So, $\{j\in J: x_{j}\notin U_{k_{0}}\}\in \mathcal{I}$. Since $\{j\in J: x_{j}\notin \bigcup_{k\in K}U_{k}\}\subseteq \{j\in J: x_{j}\notin U_{k_{0}}\}\in \mathcal{I}$, $\{j\in J: x_{j}\notin \bigcup_{k\in K}U_{k}\}\in \mathcal{I}$. Hence, $\bigcup_{k\in K}U_{k}\in \mathcal{IS}(X)$.

If $U_{1}, U_{2}\in \mathcal{IS}(X)$ and $(x_{j})_{j\in J}\rightarrow _{\mathcal{IS}}x\in U_{1}\cap U_{2}$, then $\{j\in J: x_{j}\notin U_{i}\}\in \mathcal{I}$, $i=1,2$. Since $\{j\in J: x_{j}\notin U_{1}\cap U_{2}\}=\{j\in J: x_{j}\notin U_{1}\}\cup\{j\in J: x_{j}\notin U_{2}\}\in \mathcal{I}$, $U_{1}\cap U_{2}\in \mathcal{IS}(L)$.
\end{proof}
\begin{lem}\label{5.8} Let $X$ be a $T_{0}$ topological space. Then $U\in \mathcal{IS}(X)$ iff $U\in \mathcal{D}(X)$.
\end{lem}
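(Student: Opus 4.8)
The plan is to verify the two inclusions $\mathcal{D}(X)\subseteq\mathcal{IS}(X)$ and $\mathcal{IS}(X)\subseteq\mathcal{D}(X)$ separately, in each case translating between the abstract $\mathcal{IS}$-convergence of nets and the concrete convergence of directed sets used to define $\mathcal{D}(X)$. Both directions are bookkeeping once the right net/ideal pair is chosen.

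For the inclusion $\mathcal{D}(X)\subseteq\mathcal{IS}(X)$, I would take $U\in\mathcal{D}(X)$ and an arbitrary triple $((x_{j})_{j\in J},\mathcal{I},x)\in\mathcal{L}_{\mathcal{IS}}$ with $x\in U$, and show $\{j\in J: x_{j}\notin U\}\in\mathcal{I}$. Unwinding Definition \ref{5.3}, the convergence $(x_{j})_{j\in J}\rightarrow_{\mathcal{IS}}x$ furnishes a directed set $D$ with $D\rightarrow_{\tau}x$ and $\{j\in J: x_{j}\ngeq d\}\in\mathcal{I}$ for every $d\in D$. Since $(D,x)\in D(X)$ and $U$ is monotone determined open with $x\in U$, we get $D\cap U\neq\emptyset$, say $d_{0}\in D\cap U$. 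The key point is that $U=\uparrow U$ by Remark \ref{2.2}(3), so $x_{j}\geq d_{0}$ forces $x_{j}\in U$; consequently $\{j\in J: x_{j}\notin U\}\subseteq\{j\in J: x_{j}\ngeq d_{0}\}\in\mathcal{I}$, and downward closure of $\mathcal{I}$ yields the claim.

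For the reverse inclusion $\mathcal{IS}(X)\subseteq\mathcal{D}(X)$, I would take $U\in\mathcal{IS}(X)$ and verify monotone determined openness directly: given $(D,x)\in D(X)$ with $x\in U$, I must produce a point of $D\cap U$. The idea is to regard $D$ as the net $(x_{d})_{d\in D}$ with $x_{d}=d$ and pair it with the $D$-admissible ideal $\mathcal{I}_{0}=\mathcal{I}_{0}(D)$. Since $D\backslash M_{d}\in\mathcal{I}_{0}$ for each $d\in D$, we have $(x_{d})_{d\in D}\rightarrow_{\mathcal{I}_{0}\mathcal{S}}x$, which is exactly the content of Lemma \ref{5.4}. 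Then $U\in\mathcal{IS}(X)$ together with $x\in U$ gives $\{d\in D: d\notin U\}\in\mathcal{I}_{0}$; because $\mathcal{I}_{0}$ is non-trivial, this set is a proper subset of $D$, so some $d\in D$ satisfies $d\in U$, i.e. $D\cap U\neq\emptyset$.

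The argument carries no substantial obstacle; it is essentially the observation that $\mathcal{I}_{0}$-convergence of a directed net recovers ordinary convergence. The two places requiring care are the appeal to Remark \ref{2.2}(3) to know that members of $\mathcal{D}(X)$ are upper sets (without which the inclusion $\{j: x_{j}\notin U\}\subseteq\{j: x_{j}\ngeq d_{0}\}$ fails), and the use of the non-triviality of $\mathcal{I}_{0}$ in the reverse direction to guarantee an actual witness in $D\cap U$ rather than merely a set belonging to the ideal.
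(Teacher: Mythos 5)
Your proof is correct and follows essentially the same route as the paper's: the forward inclusion extracts the witnessing directed set $D$ from Definition \ref{5.3} and uses $D\cap U\neq\emptyset$ together with upward closure of $U$, while the reverse inclusion runs the identity net $(x_{d})_{d\in D}$ against the non-trivial ideal $\mathcal{I}_{0}$ exactly as the paper does. Your write-up is in fact slightly cleaner, since the paper's second half contains a typo (``$\{d\in D: x_{d}\ngeq x\}\neq D$'' where $\{d\in D: x_{d}\notin U\}\neq D$ is meant) that your version avoids.
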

\begin{proof} Suppose that $U\in \mathcal{D}(X)$ and $(x_{j})_{j\in J}\rightarrow _{\mathcal{IS}}x\in U$. To show $U\in \mathcal{IS}(X)$, we need only to show that $\{j\in J: x_{j}\notin U\}\in \mathcal{I}$. By Definition \ref{5.3}, there exists a directed subset $D$ of $X$ such that $D\rightarrow_{\tau} x$ and for any $d\in D$, $\{j\in J: x_{j}\ngeq d\}\in \mathcal{I}$. Since $D\rightarrow_{\tau} x$, $d\in D\cap U$ for some $d\in D$. Since $\{j\in J: x_{j}\notin U\}\subseteq \{j\in J: x_{j}\ngeq d\}\in \mathcal{I}$,  $\{j\in J: x_{j}\notin U\}\in \mathcal{I}$. Hence $U\in \mathcal{IS}(X)$.

 On the other hand, let $U\in \mathcal{IS}(X)$ and $D$ be a directed subset of $X$ with $D\rightarrow_{\tau} x\in U$. Consider the net $(x_{d})_{d\in D}$, where $x_{d}=d$, for each $d\in D$. Then $(x_{d})_{d\in D}\rightarrow_{\mathcal{I}_{0}\mathcal{S}}x$ and therefore, by hypothesis, $\{d\in D: x_{d}\notin U\}\in \mathcal{I}_{0}$. Since $\mathcal{I}_{0}$ is non-trivial, $\{d\in D: x_{d}\ngeq x\}\neq D$. Hence, there exists $x_{d}=d\in D$ such that $d\in U$. Thus, $U\in d(X)$. $\Box$
\end{proof}
\begin{cor}\label{5.9} Let $X$ be a monotone determined space, then $\tau=d(X)=\mathcal{IS}(X)$.
\end{cor}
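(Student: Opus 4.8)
The plan is to obtain the corollary by chaining two equalities that are already available in the excerpt, so the argument is essentially a one-line deduction. Since the hypothesis is that $X$ is a monotone determined space, Definition \ref{2.1} supplies directly the first equality $\tau = \mathcal{D}(X)$ (written $d(X)$ in the statement, the two notations denoting the same family of monotone determined open sets). For the second equality I would invoke Lemma \ref{5.8}, which asserts that for any $T_{0}$ topological space a subset $U$ lies in $\mathcal{IS}(X)$ if and only if it lies in $\mathcal{D}(X)$; read as a statement about membership, this biconditional is exactly the set equality $\mathcal{IS}(X) = \mathcal{D}(X)$. Transitivity then yields $\tau = \mathcal{D}(X) = \mathcal{IS}(X)$.

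It is worth spelling out where each hypothesis enters. The monotone-determinedness of $X$ is used only for the equality $\tau = \mathcal{D}(X)$, because Lemma \ref{5.8} already holds for an arbitrary $T_{0}$ space and requires no such assumption. Thus the role of the hypothesis is merely to identify the given topology $\tau$ with the monotone-determined-open family $\mathcal{D}(X)$, after which Lemma \ref{5.8} bridges across to the convergence-induced family $\mathcal{IS}(X)$. I would also note in passing that $\mathcal{IS}(X)$ has already been verified to be a genuine topology in Proposition \ref{5.7}, so the resulting chain is honestly an equality of topologies and not merely a coincidence of two set families.

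There is no real obstacle here: the statement is an immediate corollary of Definition \ref{2.1} together with Lemma \ref{5.8}. The only point requiring a moment's care is to interpret the biconditional of Lemma \ref{5.8} correctly as the equality of the two collections of open sets, rather than as a pointwise condition about a single fixed $U$, so that the transitive step $\tau = \mathcal{D}(X) = \mathcal{IS}(X)$ is justified as a genuine equality of the three topologies.
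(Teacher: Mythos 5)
Your proof is correct and is exactly the paper's (implicit) argument: the corollary follows immediately by combining Definition \ref{2.1}, which gives $\tau=\mathcal{D}(X)$ for a monotone determined space, with Lemma \ref{5.8}, which gives $\mathcal{IS}(X)=\mathcal{D}(X)$ for any $T_{0}$ space. Your additional remarks on where the hypothesis is used and on Proposition \ref{5.7} are accurate but not needed.
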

\begin{cor}\label{5.10}
Let $X$ be a monotone determined space, then $\{\Ua_{d}x:x\in X\}$ is a basis of $\mathcal{IS}(X)$.
\end{cor}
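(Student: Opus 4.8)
The plan is to deduce this basis statement from the topological identification already in hand rather than arguing about $\mathcal{IS}$-convergence directly. By Corollary~\ref{5.9} we have $\mathcal{IS}(X)=\mathcal{D}(X)=\tau$ for a monotone determined space, so a family is a basis of $\mathcal{IS}(X)$ exactly when it is a basis of the original topology $\tau$. Thus the whole task reduces to showing that $\{\Ua_{d}x:x\in X\}$ is a basis of $\tau$, after which the passage back to $\mathcal{IS}(X)$ is a formality.

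To establish the basis property for $\tau$ I would verify the two usual conditions. The first is that each $\Ua_{d}x$ is open, and this is the step I expect to be the main obstacle. Indeed, given a directed set $D\rightarrow_{\tau}y$ with $x\ll_{d}y$, the d-way-below relation only yields some $d\in D$ with $x\le d$, whereas membership $d\in\Ua_{d}x$ requires the stronger $x\ll_{d}d$; bridging this gap is an interpolation problem and is not automatic for an arbitrary monotone determined space. The second condition is that every $\tau$-open set $U$ is a union of sets $\Ua_{d}x$, that is, for each $y\in U$ one can produce $x$ with $y\in\Ua_{d}x\subseteq U$.

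Both points are delivered by the approximation machinery of Section~\ref{sec2}: under the equivalences of Theorem~\ref{2.5} the relevant spaces are c-spaces, the interpolation $x\ll_{d}y\Rightarrow x\ll_{d}z\ll_{d}y$ holds by Theorem~\ref{2.6}(1), each $\Ua_{d}x$ coincides with the open set $(\uparrow x)^{\circ}$, and Theorem~\ref{2.6}(2) states outright that $\{\Ua_{d}x:x\in X\}$ is a basis of $\tau$. Granting the c-space (equivalently d-continuity) input that makes the interpolation step go through, the corollary then follows at once by combining Theorem~\ref{2.6}(2) with the identity $\mathcal{IS}(X)=\tau$ from Corollary~\ref{5.9}.
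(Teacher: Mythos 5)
Your derivation is exactly the paper's intended one: the corollary is stated there without proof, and the only route available is the one you give, namely the identification $\mathcal{IS}(X)=\mathcal{D}(X)=\tau$ from Corollary~\ref{5.9} combined with Theorem~\ref{2.6}(2). So in approach you and the paper agree completely.

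The reservation you raise, however, is not a defect of your argument but of the paper's statement: as printed, with only ``monotone determined space'' as hypothesis, the corollary is false, and your instinct that openness of $\Ua_{d}x$ ``is not automatic'' is precisely where it breaks. Concretely, take the non-continuous dcpo $X=\mathbb{N}\cup\{a,\top\}$ with its Scott topology, where $\mathbb{N}$ carries its usual order, $a$ is incomparable to every natural number, and $\top$ is the largest element. By Remark~\ref{2.2}(2) this is a monotone determined space, and for directed sets Scott convergence means $D\rightarrow_{\tau}y$ iff $y\leq \vee D$, so $\ll_{d}$ is the usual way-below relation. Since $\mathbb{N}$ is directed with $\vee\mathbb{N}=\top\geq a$ but $a\nleq n$ for all $n$, while $\{a\}\rightarrow_{\tau}a$ forces any $x\ll_{d}a$ to satisfy $x\leq a$, we get $\Da_{d}a=\emptyset$. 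Hence $a$ lies in no set $\Ua_{d}x$ at all, yet $a$ lies in the open set $X$; so $\{\Ua_{d}x:x\in X\}$ cannot be a basis of $\tau=\mathcal{IS}(X)$. The hypothesis should read ``c-space'' (equivalently, d-continuous), which matches the parallel Corollary~\ref{6.8} (stated for locally hypercompact spaces) and the fact that Theorems~\ref{5.11} and~\ref{5.16} invoke this corollary only after assuming $X$ is a c-space. With that hypothesis granted, as in the final paragraph of your proposal, your argument is complete and correct.
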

\begin{thm}\label{5.11}
 Let $X$ be a monotone determined space, then $X$ is c-space if and only if $(x_{j})_{j\in J}\rightarrow _{\mathcal{IS}}x\Leftrightarrow (x_{j})_{j\in J}\rightarrow _{\mathcal{I}}x$ with respect to $\tau$.
\end{thm}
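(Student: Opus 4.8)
The plan is to prove the two implications separately, noting first that the direction $(x_{j})_{j\in J}\rightarrow_{\mathcal{IS}}x \Rightarrow (x_{j})_{j\in J}\rightarrow_{\mathcal{I}}x$ already holds for every monotone determined space by Lemma \ref{5.4}. Hence in each direction only the reverse inclusion carries content, and the whole theorem reduces to controlling the implication $\rightarrow_{\mathcal{I}}\,\Rightarrow\,\rightarrow_{\mathcal{IS}}$.

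For the forward implication, assume $X$ is a c-space; by Theorem \ref{2.5} it is d-continuous, so $\Da_{d} x$ is directed and $\Da_{d} x\rightarrow_{\tau} x$. Suppose $(x_{j})_{j\in J}\rightarrow_{\mathcal{I}}x$ with respect to $\tau$; I verify the two clauses of Definition \ref{5.3} with the witnessing directed set $D=\Da_{d} x$. Fix $d\in D$, so $d\ll_{d} x$; by Theorem \ref{2.6} this gives $x\in int(\uparrow d)$, and $U:=int(\uparrow d)$ is an open neighbourhood of $x$ contained in $\uparrow d$. Since $(x_{j})_{j\in J}\rightarrow_{\mathcal{I}}x$, Definition \ref{2.16} yields $\{j\in J:x_{j}\notin U\}\in\mathcal{I}$; from $U\subseteq\uparrow d$ we obtain $\{j\in J:x_{j}\ngeq d\}\subseteq\{j\in J:x_{j}\notin U\}$, and because $\mathcal{I}$ is downward closed, $\{j\in J:x_{j}\ngeq d\}\in\mathcal{I}$. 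Thus $(x_{j})_{j\in J}\rightarrow_{\mathcal{IS}}x$, as required.

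For the converse, assume the two convergences coincide for every net and every ideal, fix $x\in X$, and build a single universal net witnessing d-continuity at $x$. Let $J=\{(a,U):x\in U\in\tau,\ a\in U\}$ ordered by $(a,U)\leq(b,V)\Leftrightarrow V\subseteq U$; this is directed (given $(a,U),(b,V)$ take $U\cap V$ together with the point $x$), and the net $\xi(a,U)=a$ satisfies $\xi\rightarrow_{\tau} x$, since for any open $V\ni x$ every index above $(x,V)$ has its neighbourhood inside $V$ and hence its value inside $V$. Consequently $\xi\rightarrow_{\mathcal{I}_{0}}x$, and by hypothesis $\xi\rightarrow_{\mathcal{I}_{0}\mathcal{S}}x$. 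Definition \ref{5.3} then supplies a directed $D$ with $D\rightarrow_{\tau} x$ such that $\{(a,U):a\ngeq d\}\in\mathcal{I}_{0}$ for each $d\in D$; by admissibility this set is contained in some $J\backslash M_{(b,V)}$, so $a\geq d$ for all $(a,U)\geq(b,V)$, and specialising to $U=V$ gives $V\subseteq\uparrow d$. Hence $x\in V\subseteq int(\uparrow d)$, and since $int(\uparrow d)$ is open, any directed set converging to $x$ must meet it and therefore meet $\uparrow d$, i.e. $d\ll_{d} x$ by Definition \ref{2.3}. Thus $D\subseteq\Da_{d} x$ is a directed set converging to $x$, and Theorem \ref{2.5}(3) shows $X$ is a c-space.

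The routine parts are the ideal-membership bookkeeping and the verification that $\xi\rightarrow_{\tau} x$. The main obstacle is the converse: the index set must be chosen rich enough, ranging over \emph{all} points $a$ of \emph{all} neighbourhoods $U$ of $x$, so that the directed set $D$ extracted from $\mathcal{IS}$-convergence genuinely lies in $\Da_{d} x$. A net that sampled only one point per neighbourhood could satisfy $\{(a,U):a\ngeq d\}\in\mathcal{I}_{0}$ without forcing $V\subseteq\uparrow d$, and the passage to $d\ll_{d} x$ would break down; using the full index set is precisely what makes the specialisation $U=V$ yield $V\subseteq\uparrow d$.
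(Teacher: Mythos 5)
Your proof is correct and follows essentially the same route as the paper's: the forward direction witnesses $\mathcal{IS}$-convergence with $D=\Da_{d}x$ via open sets $x\in int(\uparrow d)\subseteq \uparrow d$ (the paper's $\Ua_{d}a$), and the converse builds the same Kelley-style universal net indexed by (point, neighbourhood) pairs, extracts the directed set $D$ from $\mathcal{I}_{0}\mathcal{S}$-convergence using $\mathcal{I}_{0}$-admissibility, shows each $d\in D$ satisfies $d\ll_{d}x$, and concludes by Theorem \ref{2.5}(3). The only (cosmetic) differences are that you treat trivial and non-trivial ideals uniformly instead of splitting cases, and your index set omits the paper's superfluous natural-number component.
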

\begin{proof} Let $x$ be a c-space, then $X$ is monotone determined space. If $(x_{j})_{j\in J}\rightarrow _{\mathcal{IS}}x$, $x\in U\in\tau$, by Corollary \ref{5.9}, $U\in \mathcal{IS}(X)$,  then $\{j\in J: x_{j}\notin U\}\in \mathcal{I}$. Hence, $(x_{j})_{j\in J}\rightarrow _{\mathcal{I}}x$ with respect to $\tau$.

Conversely, suppose that $(x_{j})_{j\in J}\rightarrow _{\mathcal{I}}x$ with respect to $\tau$.
If $J\in \mathcal{I}$, let $D=\{x\}$, then $D\rightarrow_{\tau} x$ and $\{j\in J: x_{j}\notin \uparrow x\}\in \mathcal{I}$. Hence $(x_{j})_{j\in J}\rightarrow _{\mathcal{IS}}x$. Let $\mathcal{I}$ be a non-trivial ideal of $J$ and $(x_{j})_{j\in J}\rightarrow _{\mathcal{I}}x$ with respect to $\tau$. Since $X$ is c-space, $\Da_{d}x$ is directed set and $\Da_{d}x\rightarrow_{\tau} x$. Let $a\ll_{d} x$, then $x\in \Ua_{d} a\in \mathcal{IS}(X)$ by Corollary \ref{5.10}. Since $(x_{j})_{j\in J}\rightarrow _{\mathcal{IS}}x$ and $\Ua_{d} a\in \mathcal{IS}(X)$, $\{j\in J: x_{j}\notin \Ua_{d} a\}\in \mathcal{I}$. Hence $\{j\in J: x_{j}\notin \uparrow a\}\subseteq\{j\in J: x_{j}\notin \Ua_{d} a\}\in \mathcal{I}$, that is, $\{j\in J: x_{j}\notin \uparrow a\}\in \mathcal{I}$. Thus $(x_{j})_{j\in J}\rightarrow _{\mathcal{IS}}x$.

Let $(x_{j})_{j\in J}\rightarrow _{\mathcal{IS}}x\Leftrightarrow (x_{j})_{j\in J}\rightarrow _{\mathcal{I}}x$ with respect to $\tau$. For any $x\in X$, we consider the set of all the open neighborhoods $\mathcal{N}(x)=\{U: x\in U\in \tau\}$ of $x$ and let
\begin{center}
$I=\{(U, n, a)\in \mathcal{N}(x)\times N\times X:a\in U\}$.
\end{center}
 Define an order on $I$: $(U, m, a)\leq(V, n, b)$ iff $V$ is a proper subset of $U$ or $U = V$ and $m\leq n$. Obviously, $I$ is directed set. For each $i=(U, n, a)$, let $x_{i}= a$. Then $(x_{i})_{i\in I}\rightarrow_{\tau} x$. Thus, $(x_{i})_{i\in I}\rightarrow_{\mathcal{I}_{0}} x$, i.e. $(x_{i})_{i\in I}\rightarrow_{\mathcal{I}_{0}\mathcal{S}} x$. By Definition \ref{5.3}, there exists a directed subset $D$ of $X$ such that $D\rightarrow_{\tau} x$ and for any $d\in D$, $\{j\in J: x_{j}\ngeq d\}\in \mathcal{I}_{0}$. By Lemma \ref{2.15}, for any $d\in D$, there exists $i_{d} = (U_{d}, n_{d}, a_{d})\in I$ such that $x_{i}\geq x_{i_{d}}$ for any $i\geq i_{d}$. For any $w\in U_{d}$, $i=(U_{d}, n_{d}, w)\geq (U_{d}, n_{d}, a_{d})=i_{d}$, then $w\geq a_{d}=x_{i_{d}}$ and $w\in U_{d}$. So, there exists $U_{d}$ such that $U_{d}\subseteq \uparrow d$ for any $d\in D$.

For any $d\in D$, let $D'\subseteq X$ be a directed set and $D'\rightarrow_{\tau} x$. Since $x\in U_{d}\in \tau=d(x)$, $U_{d}\cap D'\neq \emptyset$. Let $d'\in U_{d}\cap D'\subseteq \uparrow d$, then $d\leq d'$. Hence $d\ll_{d} x$. Since $D\subseteq \Da_{d} x$, $D\rightarrow_{\tau} x$ and $X$ is monotone determined space, $X$ is c-space.
\end{proof}
\begin{cor}\label{5.12} Let $X$ be a monotone determined space. $\mathcal{L}_{\mathcal{IS}}$ is topological iff $X$ is a d-continuous space iff $X$ is a c-space.
\end{cor}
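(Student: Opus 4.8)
The plan is to treat the chain of equivalences by splitting it at the already-known node: the equivalence ``$X$ is d-continuous $\Leftrightarrow$ $X$ is a c-space'' is precisely Theorem~\ref{2.5}, so the whole corollary reduces to proving that $\mathcal{L}_{\mathcal{IS}}$ is topological if and only if $X$ is a c-space. I would prove the two implications separately, leaning on Theorem~\ref{5.11} (which links the c-space property to the coincidence of $\mathcal{IS}$-convergence and $\mathcal{I}$-convergence for the given topology $\tau$) and on Corollary~\ref{5.9} (which identifies $\mathcal{IS}(X)=\tau$ unconditionally for monotone determined $X$).

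The implication ``c-space $\Rightarrow$ topological'' is immediate. If $X$ is a c-space, Theorem~\ref{5.11} gives $(x_{j})_{j\in J}\rightarrow_{\mathcal{IS}}x \Leftrightarrow (x_{j})_{j\in J}\rightarrow_{\mathcal{I}}x$ with respect to $\tau$, for every net $(x_{j})_{j\in J}$ and every ideal $\mathcal{I}$ of the index set. By definition this says exactly that the convergence class $\mathcal{L}_{\mathcal{IS}}$ coincides with $\mathcal{I}$-convergence for the topology $\tau$, hence $\mathcal{L}_{\mathcal{IS}}$ is topological, witnessed by $\tau$.

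The converse ``topological $\Rightarrow$ c-space'' is where the real work lies, and the main obstacle is to show that a witnessing topology is \emph{forced} to coincide with $\tau$; once that is established, Theorem~\ref{5.11} finishes the argument. Assume $\mathcal{L}_{\mathcal{IS}}$ is topological, realised by some topology $\tau'$, so that $\mathcal{IS}$-convergence agrees with $\mathcal{I}$-convergence with respect to $\tau'$. I would prove $\tau'=\mathcal{IS}(X)$ by double inclusion. For $\tau'\subseteq\mathcal{IS}(X)$: given $U\in\tau'$ and $((x_{j})_{j\in J},\mathcal{I},x)\in\mathcal{L}_{\mathcal{IS}}$ with $x\in U$, the hypothesis yields $(x_{j})_{j\in J}\rightarrow_{\mathcal{I}}x$ with respect to $\tau'$, and since $U$ is a $\tau'$-open neighbourhood of $x$ we get $\{j\in J:x_{j}\notin U\}\in\mathcal{I}$, so $U\in\mathcal{IS}(X)$. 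For $\mathcal{IS}(X)\subseteq\tau'$: given $U\in\mathcal{IS}(X)$, I would verify openness of $U$ in $\tau'$ through the usual net criterion; if $(x_{j})_{j\in J}\rightarrow_{\tau'}x$ with $x\in U$, then $(x_{j})_{j\in J}\rightarrow_{\mathcal{I}_{0}}x$ with respect to $\tau'$ (the fact recorded after Definition~\ref{2.16}), hence $((x_{j})_{j\in J},\mathcal{I}_{0},x)\in\mathcal{L}_{\mathcal{IS}}$, and $U\in\mathcal{IS}(X)$ forces $\{j\in J:x_{j}\notin U\}\in\mathcal{I}_{0}$, i.e.\ $(x_{j})_{j\in J}$ is eventually in $U$; as this holds for every net $\tau'$-converging to a point of $U$, the set $U$ is $\tau'$-open. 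Therefore $\tau'=\mathcal{IS}(X)$, and Corollary~\ref{5.9} gives $\mathcal{IS}(X)=\tau$, whence $\tau'=\tau$. Feeding this back, $\mathcal{IS}$-convergence coincides with $\mathcal{I}$-convergence with respect to $\tau$ itself, and Theorem~\ref{5.11} delivers that $X$ is a c-space. Combined with Theorem~\ref{2.5}, this closes the full cycle of equivalences.
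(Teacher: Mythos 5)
Your proof is correct, and it follows the derivation the paper intends: Corollary \ref{5.12} is stated without any proof of its own, as an immediate consequence of Theorem \ref{5.11} (for the convergence-theoretic equivalence) and Theorem \ref{2.5} (for d-continuous $\Leftrightarrow$ c-space). What you add, and what the paper leaves entirely tacit, is the bridge that actually makes the corollary follow: ``topological'' quantifies over an \emph{arbitrary} witnessing topology $\tau'$, whereas Theorem \ref{5.11} only speaks about $\mathcal{I}$-convergence with respect to the given topology $\tau$. Your double-inclusion argument ($\tau'\subseteq\mathcal{IS}(X)$ directly from the definition of $\mathcal{IS}(X)$; $\mathcal{IS}(X)\subseteq\tau'$ via the net criterion for openness, the equivalence of $\tau'$-convergence with $\mathcal{I}_{0}$-convergence recorded after Definition \ref{2.16}, and the eventuality argument of Lemma \ref{2.15}) forces $\tau'=\mathcal{IS}(X)$, and Corollary \ref{5.9} then pins $\mathcal{IS}(X)=\tau$, so Theorem \ref{5.11} applies to the witness. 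This step is exactly the standard uniqueness-of-the-witnessing-topology argument used for $\mathcal{S}$-convergence in domain theory, and without it the deduction of the corollary from Theorem \ref{5.11} has a genuine (if routine) hole; your write-up closes it.
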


In the following, we investigate the relationships between $\mathcal{ISL}$-converge and Lawson topology.
\begin{defn}\label{5.13} Let $X$ be a $T_{0}$ topological space. A net $(x_{j})_{j\in J}\subseteq X$ is said to $\mathcal{ISL}$-converge
to a point $x\in X$, where $\mathcal{I}$ is an ideal of $J$, if $(x_{j})_{j\in J}\rightarrow _{\mathcal{IS}}x$ and for any $\{j\in J: x_{j}\ngeq y\}\in \mathcal{I}$, $x\in \uparrow y$. In this case the point $x$ is called the $\mathcal{ISL}$-limit of the net $(x_{j})_{j\in J}$ and we write $(x_{j})_{j\in J}\rightarrow _{\mathcal{ISL}}x$.
\end{defn}
Let
\begin{center}
$\mathcal{ISL}(X)=\{U\subseteq X:$ for any $(x_{j})_{j\in J}\rightarrow _{\mathcal{ISL}}x$, $x\in U$ implies $\{j\in J: x_{j}\notin U\}\in \mathcal{I}\}$.
\end{center}

\begin{prop}\label{5.14} Let $X$ be a $T_{0}$ topological space. Then $\mathcal{IS}(X)\subseteq \mathcal{ISL}(X)$.
\end{prop}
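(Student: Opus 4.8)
The plan is to exploit the fact that, by Definition \ref{5.13}, $\mathcal{ISL}$-convergence is a strengthening of $\mathcal{IS}$-convergence: the very definition of $(x_{j})_{j\in J}\rightarrow_{\mathcal{ISL}}x$ requires $(x_{j})_{j\in J}\rightarrow_{\mathcal{IS}}x$ together with an additional Lawson-type condition. Consequently, every $\mathcal{ISL}$-convergent net is also $\mathcal{IS}$-convergent, so the class of nets over which membership in $\mathcal{ISL}(X)$ is tested is contained in the class used to test membership in $\mathcal{IS}(X)$. Since a smaller collection of convergent nets imposes fewer constraints on a set to qualify as ``open,'' the inclusion $\mathcal{IS}(X)\subseteq\mathcal{ISL}(X)$ should follow formally.

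To carry this out, I would fix an arbitrary $U\in\mathcal{IS}(X)$ and show $U\in\mathcal{ISL}(X)$. By the definition of $\mathcal{ISL}(X)$, it suffices to verify that for every net $(x_{j})_{j\in J}$ with $(x_{j})_{j\in J}\rightarrow_{\mathcal{ISL}}x$ and $x\in U$, one has $\{j\in J: x_{j}\notin U\}\in\mathcal{I}$. So let such a net be given. Invoking Definition \ref{5.13}, from $(x_{j})_{j\in J}\rightarrow_{\mathcal{ISL}}x$ I extract the weaker conclusion $(x_{j})_{j\in J}\rightarrow_{\mathcal{IS}}x$.

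Now I apply the defining property of $U\in\mathcal{IS}(X)$ to this $\mathcal{IS}$-convergent net: since $x\in U$, membership of $U$ in $\mathcal{IS}(X)$ forces $\{j\in J: x_{j}\notin U\}\in\mathcal{I}$, which is exactly the condition required for $U\in\mathcal{ISL}(X)$. This completes the argument. There is no genuine obstacle here; the inclusion is a direct consequence of the implication ``$\mathcal{ISL}$-convergence $\Rightarrow$ $\mathcal{IS}$-convergence'' built into Definition \ref{5.13}. The only point demanding care is the bookkeeping of the ideal $\mathcal{I}$ of $J$: it must remain the same ideal throughout, which is guaranteed because the Lawson variant is defined on the same indexing data $((x_{j})_{j\in J},\mathcal{I})$ as the underlying $\mathcal{IS}$-convergence.
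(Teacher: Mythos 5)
Your proposal is correct and takes essentially the same approach as the paper, which simply observes that the inclusion is immediate from Definition \ref{5.13}; your write-up just spells out that directness, namely that $\mathcal{ISL}$-convergence entails $\mathcal{IS}$-convergence (with the same ideal $\mathcal{I}$), so membership in $\mathcal{ISL}(X)$ is tested against a smaller class of nets and every $U\in\mathcal{IS}(X)$ passes automatically.
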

\begin{proof} the proof is direction by Definition \ref{5.13}.
\end{proof}
Let $X$ be a monotone determined space. The Lawson topology on $X$, denoted by $\lambda(X)$, is the topology
generated by $\tau$ and $\omega(X)$, where $\omega(X)$ is the lower topology on $X$ relative to the specialization order
of $X$.

\begin{prop}\label{5.15} Let $X$ be a monotone determined space. Then $\lambda(X)\subseteq \mathcal{ISL}(X)$.
\end{prop}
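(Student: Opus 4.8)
The plan is to use the very definition of the Lawson topology: $\lambda(X)=\tau\vee\omega(X)$ is generated by the subbasis $\tau\cup\{X\setminus\uparrow y:y\in X\}$, since the sets $X\setminus\uparrow y$ form a subbasis for the lower topology $\omega(X)$. Consequently, if $W\in\lambda(X)$ and $x\in W$, then $W$ contains a basic Lawson neighbourhood of $x$ of the form $U\cap(X\setminus\uparrow F)$ with $U\in\tau$ and $F\in\mathcal{P}^{w}(X)$ (the purely open case $F=\emptyset$ being subsumed), so that $x\in U$ and $x\notin\uparrow F$. My aim is to prove directly that $W\in\mathcal{ISL}(X)$; that is, whenever $(x_{j})_{j\in J}\rightarrow_{\mathcal{ISL}}x$ with $x\in W$, I must show $\{j\in J:x_{j}\notin W\}\in\mathcal{I}$.

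First I would record the set-theoretic inclusion $\{j:x_{j}\notin W\}\subseteq\{j:x_{j}\notin U\}\cup\bigcup_{y\in F}\{j:x_{j}\geq y\}$, which follows from $U\cap(X\setminus\uparrow F)\subseteq W$ together with $\uparrow F=\bigcup_{y\in F}\uparrow y$. Because $\mathcal{I}$ is an ideal (closed under finite unions and under subsets), it then suffices to place each member of the right-hand side in $\mathcal{I}$. The first member is immediate: by Corollary \ref{5.9} and Proposition \ref{5.14} we have $x\in U\in\tau=\mathcal{IS}(X)\subseteq\mathcal{ISL}(X)$, and since $U\in\mathcal{ISL}(X)$ and $(x_{j})_{j\in J}\rightarrow_{\mathcal{ISL}}x$, the defining property of $\mathcal{ISL}(X)$ gives $\{j:x_{j}\notin U\}\in\mathcal{I}$.

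The crux, and the step I expect to require the most care, is showing that each $\{j:x_{j}\geq y\}$ with $y\in F$ lies in $\mathcal{I}$. Since $x\notin\uparrow F$ we have $x\ngeq y$ for every $y\in F$, and this is precisely where the second (Lawson) clause of Definition \ref{5.13} must do its work: the whole proposition reduces to verifying that $\mathcal{ISL}$-convergence of $(x_{j})_{j\in J}$ to a point $x$ with $x\ngeq y$ forces the net to $\mathcal{I}$-eventually avoid $\uparrow y$, i.e. $\{j:x_{j}\geq y\}\in\mathcal{I}$. Once this is secured, the finite union $\bigcup_{y\in F}\{j:x_{j}\geq y\}\in\mathcal{I}$, and combining it with the first member yields $\{j:x_{j}\notin W\}\in\mathcal{I}$, hence $W\in\mathcal{ISL}(X)$ and $\lambda(X)\subseteq\mathcal{ISL}(X)$. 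Equivalently, one could first check that $\mathcal{ISL}(X)$ is a topology by the same verification as in Proposition \ref{5.7}, then prove $\tau\subseteq\mathcal{ISL}(X)$ and each subbasic set $X\setminus\uparrow y\in\mathcal{ISL}(X)$, and conclude because $\lambda(X)$ is the smallest topology containing $\tau\cup\omega(X)$; either route isolates the same essential point, namely controlling $\{j:x_{j}\geq y\}$ via the lower clause, so that is where I would focus the careful argument.
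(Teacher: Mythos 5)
Your reduction is structurally correct: basic Lawson-open sets do have the form $U\cap(X\backslash\uparrow F)$ with $U\in\tau$ and $F\in\mathcal{P}^{w}(X)$, the inclusion $\{j: x_{j}\notin W\}\subseteq\{j: x_{j}\notin U\}\cup\bigcup_{y\in F}\{j: x_{j}\geq y\}$ holds, and the $\tau$-half is fine --- $U\in\tau=\mathcal{IS}(X)\subseteq\mathcal{ISL}(X)$ by Corollary \ref{5.9} and Proposition \ref{5.14}, which is exactly the content of step (1) of the paper's proof. The genuine gap is the step you yourself flag as the crux and then never discharge: that $(x_{j})_{j\in J}\rightarrow_{\mathcal{ISL}}x$ and $x\ngeq y$ force $\{j\in J: x_{j}\geq y\}\in\mathcal{I}$. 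This cannot be extracted from Definition \ref{5.13}. The lower clause there is the implication ``$\{j: x_{j}\ngeq y\}\in\mathcal{I}\Rightarrow y\leq x$''; from $y\nleq x$ its contrapositive gives only $\{j: x_{j}\ngeq y\}\notin\mathcal{I}$, and since an ideal need not be maximal, this says nothing about the complementary set $\{j: x_{j}\geq y\}$. In fact the needed implication is false. Take $X=\{0,a,b\}$ with $0<a$, $0<b$ and $a,b$ incomparable, equipped with the Scott (= Alexandroff) topology; this is a monotone determined space, even a c-space. Let $J=(\mathbb{N},\leq)$, $x_{2n}=a$, $x_{2n+1}=b$, and $\mathcal{I}=\mathcal{I}_{0}(\mathbb{N})$, the non-trivial ideal of finite sets. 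The directed set $D=\{0\}$ satisfies $D\rightarrow_{\tau}0$ and $\{j: x_{j}\ngeq 0\}=\emptyset\in\mathcal{I}_{0}$, so $(x_{j})_{j\in J}\rightarrow_{\mathcal{I}_{0}\mathcal{S}}0$; moreover $y=0$ is the only point with $\{j: x_{j}\ngeq y\}\in\mathcal{I}_{0}$, so the lower clause holds and $(x_{j})_{j\in J}\rightarrow_{\mathcal{I}_{0}\mathcal{SL}}0$. Yet $0\ngeq a$ while $\{j: x_{j}\geq a\}$ is the set of even indices, which is not in $\mathcal{I}_{0}$.

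The same example shows the proposition itself fails as stated: $X\backslash\uparrow a$ belongs to $\omega(X)\subseteq\lambda(X)$, but the net above $\mathcal{I}_{0}\mathcal{SL}$-converges to $0\in X\backslash\uparrow a$ without being $\mathcal{I}_{0}$-eventually in it, so $X\backslash\uparrow a\notin\mathcal{ISL}(X)$. Hence no rearrangement of your argument (nor your alternative subbasis-plus-minimality route, which needs the same fact) can close the gap. It is worth noting that the paper's own proof breaks at exactly this point, only differently: for $V=X\backslash\uparrow y$ it merely shows that a net wholly contained in $\uparrow y$ has its $\mathcal{ISL}$-limits in $\uparrow y$, which is strictly weaker than the defining condition of $\mathcal{ISL}(X)$ (every net $\mathcal{ISL}$-converging to a point of $V$ must satisfy $\{j: x_{j}\notin V\}\in\mathcal{I}$), and then concludes membership anyway. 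The real defect lies in Definition \ref{5.13}: to make the proposition (and Theorem \ref{5.16}) provable, the lower clause should be strengthened to a limsup-type condition, e.g.\ ``for any $y$ with $\{j\in J: x_{j}\geq y\}\notin\mathcal{I}$, one has $y\leq x$,'' whose contrapositive is precisely the control on $\{j: x_{j}\geq y\}$ that your proof --- and the paper's --- requires.
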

\begin{proof} We need only to prove that any $U\in \tau\cup \omega(X)$ belongs to $\mathcal{ISL}(X)$.

(1) Suppose $U\in \tau$ and $(x_{j})_{j\in J}\rightarrow _{\mathcal{ISL}}x$ with $x\in U$. Then there exists a directed set $D$
such that $D\rightarrow_{\tau}x$ and for each $d\in D$, $\{j\in J: x_{j}\ngeq d\}\in \mathcal{I}$. So, $D\cap U\neq \emptyset$. Let $d\in D\cap U$, then $\{j\in J: x_{j}\notin U\}\subseteq \{j\in J: x_{j}\ngeq d\}\in \mathcal{I}$. Hence, $U\in\mathcal{ISL}(X)$.

(2) Consider $V=X\backslash \uparrow y$ for any $y\in X$. Given any net $(x_{j})_{j\in J}\rightarrow _{\mathcal{ISL}}x$ and $(x_{j})_{j\in J}\subseteq \uparrow y$, then $\{j\in J: x_{j}\ngeq y\}=\emptyset\in \mathcal{I}$ and $y\leq x$. Hence $x\in \uparrow y$, $V=X\backslash\uparrow y\in \mathcal{ISL}(X)$.

By (1) and (2), we conclude that $\lambda(X)\subseteq \mathcal{ISL}(X)$.
\end{proof}

\begin{thm}\label{5.16} Let $X$ be a c-space, $(x_{j})_{j\in J}\subseteq X$ be a net and $\mathcal{I}$ be a non-trivial ideal of $J$. Then $(x_{j})_{j\in J}\rightarrow _{\mathcal{ISL}}x\Leftrightarrow (x_{j})_{j\in J}\rightarrow _{\mathcal{I}}x$ with respect to $\lambda(X)$.
\end{thm}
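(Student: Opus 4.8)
The plan is to prove the two implications separately, with the forward direction being essentially a restatement of Proposition \ref{5.15} and the reverse direction carrying all the real content. For ($\Rightarrow$), suppose $(x_{j})_{j\in J}\rightarrow_{\mathcal{ISL}}x$. Given any $\lambda$-open neighbourhood $U$ of $x$, Proposition \ref{5.15} tells us $U\in \mathcal{ISL}(X)$, so the defining property of $\mathcal{ISL}(X)$ turns the hypothesis $x\in U$ into $\{j\in J: x_{j}\notin U\}\in \mathcal{I}$. Since $U$ was an arbitrary $\lambda$-open neighbourhood of $x$, this is precisely $(x_{j})_{j\in J}\rightarrow_{\mathcal{I}}x$ with respect to $\lambda(X)$.

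For ($\Leftarrow$), assume $(x_{j})_{j\in J}\rightarrow_{\mathcal{I}}x$ with respect to $\lambda(X)$; I must verify the two clauses of Definition \ref{5.13}. First, for the $\mathcal{IS}$-part I would take $D=\Da_{d}x$ as the witnessing directed set: since $X$ is a c-space it is d-continuous by Theorem \ref{2.5}, so $\Da_{d}x$ is directed and $\Da_{d}x\rightarrow_{\tau}x$. Fix $y\ll_{d}x$. By Theorem \ref{2.6} we have $\Ua_{d}y=(\uparrow y)^{\circ}$, which is $\tau$-open and contains $x$; as $\tau\subseteq\lambda(X)$ it is a $\lambda$-open neighbourhood of $x$, so $\mathcal{I}$-convergence gives $\{j\in J: x_{j}\notin \Ua_{d}y\}\in \mathcal{I}$. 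Because $\Ua_{d}y\subseteq \uparrow y$, we get $\{j\in J: x_{j}\ngeq y\}\subseteq \{j\in J: x_{j}\notin \Ua_{d}y\}\in\mathcal{I}$, hence $\{j\in J: x_{j}\ngeq y\}\in \mathcal{I}$. As this holds for every $y\in \Da_{d}x$, Definition \ref{5.3} yields $(x_{j})_{j\in J}\rightarrow_{\mathcal{IS}}x$.

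For the second clause, suppose $\{j\in J: x_{j}\ngeq y\}\in \mathcal{I}$ for some $y\in X$; I claim $y\leq x$. Arguing by contradiction, if $x\notin \uparrow y$ then $x$ lies in the lower-open (hence $\lambda$-open) set $X\backslash\uparrow y$, so $\mathcal{I}$-convergence yields $\{j\in J: x_{j}\in \uparrow y\}=\{j\in J: x_{j}\geq y\}\in \mathcal{I}$. Taking the union with $\{j\in J: x_{j}\ngeq y\}\in\mathcal{I}$ and using that $\mathcal{I}$ is closed under finite unions gives $J\in \mathcal{I}$, contradicting the non-triviality of $\mathcal{I}$. Hence $y\leq x$, i.e. $x\in \uparrow y$, and combining this with the previous paragraph gives $(x_{j})_{j\in J}\rightarrow_{\mathcal{ISL}}x$.

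The main obstacle is the reverse direction, and specifically the $\mathcal{IS}$-clause: the whole argument hinges on choosing $\Da_{d}x$ as the witnessing directed set and on the c-space identity $\Ua_{d}y=(\uparrow y)^{\circ}$, which converts the order-theoretic requirement $\{j\in J: x_{j}\ngeq y\}\in\mathcal{I}$ into a statement about $\mathcal{I}$-convergence through a genuine $\tau$-open neighbourhood of $x$. The order clause, by contrast, needs only the lower-open sets of the Lawson topology together with the non-triviality of $\mathcal{I}$.
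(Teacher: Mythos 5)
Your proof is correct and follows essentially the same route as the paper's: the forward direction via Proposition \ref{5.15}, and the reverse direction by witnessing the $\mathcal{IS}$-clause with the directed set $\Da_{d}x$ (using that each $\Ua_{d}y$ is a $\tau$-open, hence $\lambda$-open, neighbourhood of $x$) and settling the order clause by contradiction with the lower-open set $X\backslash\uparrow y$ and the non-triviality of $\mathcal{I}$. The only cosmetic difference is that you justify the openness of $\Ua_{d}y$ by Theorem \ref{2.6}, where the paper cites Corollary \ref{5.10}.
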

\begin{proof} Let $x$ be a c-space. By Proposition \ref{5.15}, $\mathcal{ISL}(X)$ is finer than
$\lambda(X)$. Hence, for any $(x_{j})_{j\in J}\rightarrow _{\mathcal{ISL}}x$ , $(x_{j})_{j\in J}\rightarrow _{\mathcal{I}}x$ with respect to $\lambda(X)$.

Conversely, suppose that $(x_{j})_{j\in J}\rightarrow _{\mathcal{I}}x$ with respect to $\lambda(X)$. Let $\mathcal{I}$ be a non-trivial ideal of $J$ and $\{j\in J: x_{j}\ngeq y\}\in \mathcal{I}$. If $y\nleq x$, then $X\backslash \uparrow y\in \lambda(X)$ and $x\in X\backslash \uparrow y\in \lambda(X)$. So, $\{j\in J: x_{j}\notin X\backslash \uparrow y\}\in \mathcal{I}$. Then $J=\{j\in J: x_{j}\ngeq y\}\cup \{j\in J: x_{j}\notin X\backslash \uparrow y\}\in \mathcal{I}$. Contradiction with $\mathcal{I}$ be a non-trivial ideal of $J$, thus $y\leq x$.

Since $X$ is c-space, $\Da_{d}x$ is directed set and $\Da_{d}x\rightarrow_{\tau} x$. Let $a\ll_{d} x$, then $x\in \Ua_{d} a\in \mathcal{IS}(X)$ by Corollary \ref{5.10}. Since $(x_{j})_{j\in J}\rightarrow _{\mathcal{I}}x$ with respect to $\lambda(X)$ and $\Ua_{d} a\in \mathcal{IS}(X)\subseteq\lambda(X)$, $\{j\in J: x_{j}\notin \Ua_{d} a\}\in \mathcal{I}$. Hence $\{j\in J: x_{j}\notin \uparrow a\}\subseteq\{j\in J: x_{j}\notin \Ua_{d} a\}\in \mathcal{I}$ and $(x_{j})_{j\in J}\rightarrow _{\mathcal{IS}}x$.

Therefore, $(x_{j})_{j\in J}\rightarrow _{\mathcal{ISL}}x$ by Definition \ref{5.13}.

\end{proof}

\section{Ideal $\mathcal{GS}$-convergence and locally hypercompact space}\label{sec6}

In this section, we will define the notion of ideal $\mathcal{GS}$-convergence on a topological spaces
$X$. We show that ideal $\mathcal{GS}$-convergence on a monotone determined space is topological iff $X$ is a locally hypercompact space. These theorems for
ideal $\mathcal{S}$-convergence which is defined in last section can be viewed as special case of ideal $\mathcal{GS}$-convergence.

\begin{defn}\label{6.1}
 Let $X$ be a $T_{0}$ topological space. A net $(x_{j})_{j\in J}\subseteq X$ is said to $\mathcal{IGS}$-converge
to a point $x\in X$, where $\mathcal{I}$ is an ideal of $J$, if there exists a directed family $\mathcal{F}\subseteq \mathcal{P}^{w}(X)$  such that $\mathcal{F}\rightarrow_{\tau} x$ and for any $F\in \mathcal{F}$, $\{j\in J: x_{j}\notin \uparrow F\}\in \mathcal{I}$. In this case the point $x$ is called the $\mathcal{IGS}$-limit of the net $(x_{j})_{j\in J}$ and we write $(x_{j})_{j\in J}\rightarrow _{\mathcal{IGS}}x$.
\end{defn}

\begin{prop}\label{6.2}
Let $X$ be a $T_{0}$ topological space, then $(x_{j})_{j\in J}\rightarrow _{\mathcal{IS}}x$ implies $(x_{j})_{j\in J}\rightarrow _{\mathcal{IGS}}x$.
\end{prop}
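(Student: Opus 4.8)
The plan is to manufacture a directed family of finite sets witnessing $\mathcal{IGS}$-convergence directly out of the directed set witnessing $\mathcal{IS}$-convergence. Concretely, suppose $(x_{j})_{j\in J}\rightarrow_{\mathcal{IS}}x$, so by Definition \ref{5.3} there is a directed subset $D$ of $X$ with $D\rightarrow_{\tau}x$ and $\{j\in J: x_{j}\ngeq d\}\in\mathcal{I}$ for every $d\in D$. I would take $\mathcal{F}=\{\{d\}: d\in D\}$, the family of singletons drawn from $D$, and verify the three requirements of Definition \ref{6.1}.

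First I would check that $\mathcal{F}$ is a directed family in the Smyth sense. Given $\{d_{1}\},\{d_{2}\}\in\mathcal{F}$, directedness of $D$ yields $d_{3}\in D$ with $d_{1}\leq d_{3}$ and $d_{2}\leq d_{3}$; then $\{d_{3}\}\subseteq\uparrow d_{1}\cap\uparrow d_{2}=\uparrow\{d_{1}\}\cap\uparrow\{d_{2}\}$, which is precisely the condition required for $\mathcal{F}$ to be directed. Next, $\mathcal{F}\rightarrow_{\tau}x$ follows from $D\rightarrow_{\tau}x$: for any open neighbourhood $U$ of $x$, convergence of the net $D$ gives some $d_{0}\in D$ lying in $U$, and then the singleton $\{d_{0}\}\in\mathcal{F}$ satisfies $\{d_{0}\}\subseteq U$. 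Finally, the index condition is immediate from the identity $\uparrow\{d\}=\uparrow d$: for each $F=\{d\}\in\mathcal{F}$ we have $\{j\in J: x_{j}\notin\uparrow F\}=\{j\in J: x_{j}\ngeq d\}$, which lies in $\mathcal{I}$ by hypothesis. Hence $(x_{j})_{j\in J}\rightarrow_{\mathcal{IGS}}x$.

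I do not anticipate a genuine obstacle here: the entire argument rests on the observation that a singleton $\{d\}$ is a nonempty finite set with $\uparrow\{d\}=\uparrow d$, so the directed set $D$ can be reread verbatim as a directed family of one-element subsets, transferring each of the three defining clauses of $\mathcal{IS}$-convergence to the corresponding clause of $\mathcal{IGS}$-convergence. The only point meriting a line of care is that ``$\mathcal{F}\rightarrow_{\tau}x$'' for a family of finite sets is phrased via the inclusion $F\subseteq U$ rather than pointwise membership in $U$, but for singletons the two formulations coincide, so the verification is routine.
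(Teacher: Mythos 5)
Your proof is correct and follows exactly the same route as the paper's: both take the witnessing directed set $D$ for $\mathcal{IS}$-convergence, pass to the singleton family $\mathcal{F}=\{\{d\}:d\in D\}$, and transfer the three clauses (Smyth-directedness, $\mathcal{F}\rightarrow_{\tau}x$, and the ideal condition via $\uparrow\{d\}=\uparrow d$). The only difference is that you spell out the verifications that the paper leaves implicit.
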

\begin{proof} Let $(x_{j})_{j\in J}\rightarrow _{\mathcal{IS}}x$, then there exists a directed subset $D$ of $X$ such that $D\rightarrow_{\tau} x$ and for any $d\in D$, $\{j\in J: x_{j}\ngeq d\}\in \mathcal{I}$ by Definition \ref{5.3}. Let $\mathcal{F}=\{\{d\}:d\in D\}$, then $\mathcal{F}$ is directed and $\mathcal{F}\rightarrow_{\tau} x$. For any $F\in \mathcal{F}$,  $\{j\in J: x_{j}\notin \uparrow F\}=\{j\in J: x_{j}\ngeq d\}\in \mathcal{I}$. Hence, $(x_{j})_{j\in J}\rightarrow _{\mathcal{IGS}}x$.
\end{proof}

In general, the inverse of Proposition \ref{6.2} is not true.

\begin{exmp}\label{6.3} Let $X=N\cup \{a,\infty\}$. Define an order on $X$: $\forall x,y\in X$, $x\leq y\Leftrightarrow y=\infty$ or $x,y\in N$, $x\leq y$. Then $(X, \sigma(X))$ is a directed space and $\leq=\leq_{\sigma}$, where $\leq_{\sigma}$ is specialization order with respect to $\sigma(X)$. Obviously, $X$ is quasicontinuous space but is not continuous space. For any $n\in N$, $\{n,a\}\ll_{d} \{a\}$ and $\mathcal{F}\rightarrow_{\sigma(X)} x$, where $\mathcal{F}=\{\{a,n\}: n\in N\}$. Let $x_{2n}=n$, $x_{2n+1}=a$, $n\in N$, then $(x_{n})_{n\in N}$ is a net. Since $\{m\in N: x_{m}\notin \uparrow \{n,a\}\}\in \mathcal{I}_{0}(N)$, $(x_{n})_{n\in N}\rightarrow _{\mathcal{IGS}}a$. But $(x_{j})_{j\in J}\rightarrow _{\mathcal{IS}}x$ is not hold.
\end{exmp}
Similarly with the Lemma \ref{5.4}, if $(x_{j})_{j\in J}\rightarrow _{\mathcal{IGS}}x$, then $(x_{j})_{j\in J}\rightarrow _{\mathcal{I}}x$.

\begin{prop}\label{6.4} Let $X$ be a $T_{0}$ topological space and $x\in X$, $G\in \mathcal{P}^{w}(X)$. If for any net $(x_{j})_{j\in J}$ in $X$ and any non-trivial ideal $\mathcal{I}$ of $J$ such that $(x_{j})_{j\in J}\rightarrow _{\mathcal{IGS}}y$, we have $\{j\in J: x_{j}\notin \uparrow G\}\in \mathcal{I}$, then $x\ll_{d} y$.
\end{prop}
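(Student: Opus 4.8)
The plan is to establish $G\ll_{d} y$ directly from Definition \ref{2.7}, by showing that every directed subset $D$ of $X$ with $D\rightarrow_{\tau} y$ must meet $\uparrow G$. (I read the conclusion as $G\ll_{d} y$, the $\mathcal{GS}$-analogue of the converse implication in Proposition \ref{5.5}, since the hypothesis concerns $\uparrow G$.) The key idea, mirroring the converse direction of Proposition \ref{5.5}, is to feed the hypothesis a canonical net built from $D$ itself, paired with a directed family of singletons, and then to exploit the non-triviality of the admissible ideal $\mathcal{I}_{0}$.

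First I would fix a directed subset $D\subseteq X$ with $D\rightarrow_{\tau} y$ and form the net $(x_{d})_{d\in D}$ indexed by $D$ with $x_{d}=d$. The witnessing family is $\mathcal{F}=\{\{d\}:d\in D\}$. Because $D$ is directed, $\mathcal{F}$ is a directed family: given $\{d_{1}\},\{d_{2}\}$ there is $d\in D$ with $d\geq d_{1},d_{2}$, so $\{d\}\subseteq\uparrow\{d_{1}\}\cap\uparrow\{d_{2}\}$. Moreover $\mathcal{F}\rightarrow_{\tau} y$, since for any open neighbourhood $U$ of $y$ the convergence $D\rightarrow_{\tau} y$ yields some $d_{0}\in D\cap U$, whence $\{d_{0}\}\subseteq U$.

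Next I would verify the admissibility clause of Definition \ref{6.1} for the ideal $\mathcal{I}_{0}=\mathcal{I}_{0}(D)$. For each $F=\{d\}\in\mathcal{F}$ we have $\{d'\in D:x_{d'}\notin\uparrow F\}=\{d'\in D:d'\ngeq d\}=D\backslash M_{d}$, which lies in $\mathcal{I}_{0}$ by the very definition of a $D$-admissible ideal. Hence $(x_{d})_{d\in D}\rightarrow_{\mathcal{I}_{0}\mathcal{GS}} y$. Since $\mathcal{I}_{0}$ is non-trivial, I may invoke the hypothesis with $\mathcal{I}=\mathcal{I}_{0}$ to get $\{d\in D:x_{d}\notin\uparrow G\}\in\mathcal{I}_{0}$, and non-triviality ($D\notin\mathcal{I}_{0}$) forces this set to differ from $D$. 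Thus some $d\in D$ satisfies $x_{d}=d\in\uparrow G$, i.e. $D\cap\uparrow G\neq\emptyset$. As $D$ was an arbitrary directed set converging to $y$, Definition \ref{2.7} gives $G\ll_{d} y$.

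I expect no serious obstacle: the only point requiring care is verifying the $\mathcal{I}_{0}\mathcal{GS}$-convergence of the test net, and this reduces to the admissibility identity $D\backslash M_{d}\in\mathcal{I}_{0}$ recorded in the preliminaries. The construction deliberately uses singleton finite sets so that the $\mathcal{GS}$-condition collapses to the $\mathcal{S}$-type condition already handled in Proposition \ref{5.5}.
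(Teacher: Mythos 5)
Your proof is correct and follows essentially the same route as the paper's: both test the hypothesis against the canonical net $(x_d)_{d\in D}$ built from an arbitrary directed set $D\rightarrow_{\tau}y$, witness the $\mathcal{I}_{0}\mathcal{GS}$-convergence by the directed family of singletons $\{\{d\}:d\in D\}$, and use non-triviality of $\mathcal{I}_{0}$ to extract $d\in D\cap\uparrow G$, concluding $G\ll_{d}y$. You correctly identified the statement's typo (the conclusion should read $G\ll_{d}y$), and your write-up is in fact more careful than the paper's, which blurs the distinction between the witnessing family and the net when asserting the $\mathcal{I}_{0}\mathcal{GS}$-convergence.
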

\begin{proof} Suppose that for any net $(x_{j})_{j\in J}$ in $X$ and any non-trivial ideal $\mathcal{I}$ of $J$ such that $(x_{j})_{j\in J}\rightarrow _{\mathcal{IGS}}x$, we have $\{j\in J: x_{j}\notin \uparrow G\}\in \mathcal{I}$. Let $D$ be a directed subset of $X$ such that $D\rightarrow_{\tau} x$. Consider the family $\mathcal{F}=\{\{d\}: d\in D\}$. Then $\mathcal{F}\rightarrow_{\mathcal{I}_{0}\mathcal{GS}}x$ and therefore, by hypothesis, $\{d\in D: d \notin \uparrow G\}\in \mathcal{I}_{0}$. Since $\mathcal{I}_{0}$ is non-trivial, $\{d\in D: d \notin \uparrow G\}\neq D$. Hence, there exists $d\in D$ such that $d\in \uparrow G$. Thus, $G\ll_{d} y$.
\end{proof}

  \begin{prop}\label{6.5}Let $X$ be a locally hypercompact space, $x\in X$, $G\in \mathcal{P}^{w}(X)$ and $(x_{j})_{j\in J}$ in $X$ be a net. If for any $G\ll_{d} y$ and any non-trivial ideal $\mathcal{I}$ of $J$, we have $\{j\in J: x_{j}\notin \uparrow G\}\in \mathcal{I}$, then $(x_{j})_{j\in J}\rightarrow _{\mathcal{IGS}}y$.
\end{prop}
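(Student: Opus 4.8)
The plan is to produce, directly from the hypothesis, a concrete directed family witnessing $\mathcal{IGS}$-convergence, in exact analogy with the converse half of Proposition \ref{5.6} (replacing points by finite sets). The key structural fact is that a locally hypercompact space is d-quasicontinuous: by Theorem \ref{2.11}, since $X$ is locally hypercompact it is d-quasicontinuous, and hence by Definition \ref{2.8} the family $fin_{d}(y)=\{F\in \mathcal{P}^{w}(X): F\ll_{d} y\}$ is a directed family with $fin_{d}(y)\rightarrow_{\tau} y$. This family is the natural candidate for the directed family demanded by Definition \ref{6.1}.

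Fixing the given non-trivial ideal $\mathcal{I}$ of $J$, I would set $\mathcal{F}=fin_{d}(y)$ and check the two defining conditions of $\mathcal{IGS}$-convergence. The convergence condition $\mathcal{F}\rightarrow_{\tau} y$ is immediate from d-quasicontinuity. For the second condition, take an arbitrary $F\in \mathcal{F}$; by the definition of $fin_{d}(y)$ we have $F\ll_{d} y$, so instantiating the hypothesis at $G=F$ gives $\{j\in J: x_{j}\notin \uparrow F\}\in \mathcal{I}$. Since $F$ was arbitrary, both clauses of Definition \ref{6.1} are satisfied by the family $\mathcal{F}$, and therefore $(x_{j})_{j\in J}\rightarrow_{\mathcal{IGS}} y$.

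I do not expect any genuine obstacle here; the argument is essentially a transcription of Definition \ref{6.1} once the correct family is identified. The only point needing attention is that the hypothesis must be read as quantified over all finite sets $G$ with $G\ll_{d} y$ (for the fixed ideal $\mathcal{I}$), so that it can be applied to every member of $fin_{d}(y)$. Granting this reading, the verification that the data $(fin_{d}(y),\mathcal{I})$ meets Definition \ref{6.1} is entirely routine, and the proposition follows.
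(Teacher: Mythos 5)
Your proposal is correct and follows exactly the paper's own argument: both take the directed family $fin_{d}(y)$ (guaranteed directed and convergent to $y$ by local hypercompactness via Theorem \ref{2.11} and Definition \ref{2.8}) as the witness for Definition \ref{6.1}, and then apply the hypothesis to each of its members. The only difference is cosmetic — you spell out the two clauses of Definition \ref{6.1} explicitly, while the paper compresses this into one line (and writes $fin_{d}(x)$ where $fin_{d}(y)$ is meant, a typo your version silently corrects).
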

\begin{proof} Let $(x_{j})_{j\in J}$ be a net in $X$ and for any $G\ll_{d} y$, we have $\{j\in J: x_{j}\notin \uparrow G\}\in \mathcal{I}$, where $\mathcal{I}$ is a non-trivial ideal of $J$. Since the family $fin_{d}(x)$ is directed and $fin_{d}(x)\rightarrow_{\tau}x$, $(x_{j})_{j\in J}\rightarrow _{\mathcal{IGS}}y$ by hypothesis and Definition \ref{6.1}.
\end{proof}

We define a convergence class $\mathcal{L}_{\mathcal{IGS}}$ on $X$ as follows: $((x_{j})_{j\in J},\mathcal{I}, x)\in \mathcal{L}_{\mathcal{IGS}}$ iff $(x_{j})_{j\in J}\rightarrow _{\mathcal{IGS}}x$, where $(x_{j})_{j\in J}$ is a net in $X$, $x\in X$ and $\mathcal{I}$ is an ideal of $J$. Let
\begin{center} $\mathcal{IGS}(X)=\{U\subseteq X:$ for any $((x_{j})_{j\in J}, \mathcal{I}, x)\in \mathcal{L}_{\mathcal{IGS}}$, $x\in U$ implies $\{j\in J: x_{j}\notin U\}\in \mathcal{I}\}$.
\end{center}

\begin{lem}\label{6.6}  Let $X$ be a $T_{0}$ topological space. Then $\mathcal{IGS}(X)=\mathcal{IS}(X)=\mathcal{D}(X)$.
\end{lem}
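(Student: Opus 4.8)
The plan is to reduce the triple equality to the single new inclusion $\mathcal{D}(X)\subseteq\mathcal{IGS}(X)$. Indeed, Lemma \ref{5.8} already gives $\mathcal{IS}(X)=\mathcal{D}(X)$ for an arbitrary $T_{0}$ space, so it suffices to sandwich $\mathcal{IGS}(X)$ between $\mathcal{D}(X)$ and $\mathcal{IS}(X)$. For the inclusion $\mathcal{IGS}(X)\subseteq\mathcal{IS}(X)$, I would invoke Proposition \ref{6.2}: every $\mathcal{IS}$-convergent triple $((x_{j})_{j\in J},\mathcal{I},x)$ is automatically $\mathcal{IGS}$-convergent, i.e. $\mathcal{L}_{\mathcal{IS}}\subseteq\mathcal{L}_{\mathcal{IGS}}$. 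Hence if $U\in\mathcal{IGS}(X)$ satisfies the defining condition against the \emph{larger} class $\mathcal{L}_{\mathcal{IGS}}$, it in particular satisfies it against $\mathcal{L}_{\mathcal{IS}}$, so $U\in\mathcal{IS}(X)$. This step is routine.

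The substantive part is $\mathcal{D}(X)\subseteq\mathcal{IGS}(X)$, and its heart is the following sublemma (the finite-set analogue of Remark \ref{2.2}(5)): if $\mathcal{F}\subseteq\mathcal{P}^{w}(X)$ is a directed family with $\mathcal{F}\rightarrow_{\tau}x$ and $U\in\mathcal{D}(X)$ with $x\in U$, then there is some $F\in\mathcal{F}$ with $F\subseteq U$ (equivalently $\uparrow F\subseteq U$, since $U$ is upper by Remark \ref{2.2}(3)). I would prove this by contradiction: assume $F\backslash U\neq\emptyset$ for every $F\in\mathcal{F}$. First I would check that $\mathcal{G}=\{F\backslash U: F\in\mathcal{F}\}$ is again a directed family of nonempty finite sets; given $F_{1},F_{2}\in\mathcal{F}$ choose $F_{3}\subseteq\uparrow F_{1}\cap\uparrow F_{2}$, and for $z\in F_{3}\backslash U$ the witness $f_{1}\in F_{1}$ with $f_{1}\leq z$ cannot lie in $U$ (else $z\in\uparrow U=U$), so $F_{3}\backslash U\subseteq\uparrow(F_{1}\backslash U)\cap\uparrow(F_{2}\backslash U)$.

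Next I would apply Rudin's Lemma (Lemma \ref{4.2}) to $\mathcal{G}$ to obtain a directed set $D\subseteq\bigcup_{F\in\mathcal{F}}(F\backslash U)\subseteq X\backslash U$ meeting every $F\backslash U$. Since $\mathcal{F}\rightarrow_{\tau}x$, for any $\tau$-open $V\ni x$ there is $F\in\mathcal{F}$ with $F\subseteq V$, whence $D\cap(F\backslash U)\subseteq D\cap V\neq\emptyset$; as $V$ is an upper set and $D$ is directed, $D$ is eventually in $V$, so $D\rightarrow_{\tau}x$. Then $(D,x)\in D(X)$ with $x\in U\in\mathcal{D}(X)$ forces $D\cap U\neq\emptyset$, contradicting $D\subseteq X\backslash U$. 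This establishes the sublemma, and I expect the verification that $\mathcal{G}$ is directed together with the ``eventually in $V$'' passage to be the main obstacle, since these are where the upper-set structure of $\mathcal{D}(X)$-open sets and the Smyth preorder must be used carefully.

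Finally I would assemble the conclusion. Given $U\in\mathcal{D}(X)$ and an $\mathcal{IGS}$-convergent triple $(x_{j})_{j\in J}\rightarrow_{\mathcal{IGS}}x$ with $x\in U$, Definition \ref{6.1} supplies a directed family $\mathcal{F}\rightarrow_{\tau}x$ with $\{j\in J: x_{j}\notin\uparrow F\}\in\mathcal{I}$ for all $F\in\mathcal{F}$. By the sublemma pick $F\in\mathcal{F}$ with $\uparrow F\subseteq U$; then $\{j\in J: x_{j}\notin U\}\subseteq\{j\in J: x_{j}\notin\uparrow F\}\in\mathcal{I}$, and since ideals are downward closed, $\{j\in J: x_{j}\notin U\}\in\mathcal{I}$, so $U\in\mathcal{IGS}(X)$. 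Combining $\mathcal{D}(X)\subseteq\mathcal{IGS}(X)\subseteq\mathcal{IS}(X)=\mathcal{D}(X)$ (the last equality from Lemma \ref{5.8}, cf. Corollary \ref{5.9}) yields $\mathcal{IGS}(X)=\mathcal{IS}(X)=\mathcal{D}(X)$.
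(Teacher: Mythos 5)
Your proposal is correct, and its outer skeleton matches the paper's proof exactly: both reduce the triple equality to $\mathcal{D}(X)\subseteq\mathcal{IGS}(X)$ by combining $\mathcal{L}_{\mathcal{IS}}\subseteq\mathcal{L}_{\mathcal{IGS}}$ (Proposition \ref{6.2}) with Lemma \ref{5.8}. The genuine difference lies in the key step. The paper simply asserts: ``Since $\mathcal{F}\rightarrow_{\tau} x$, there exists $F\in \mathcal{F}$ such that $F\subseteq U$'' for $U\in\mathcal{D}(X)$. But the definition of $\mathcal{F}\rightarrow_{\tau}x$ only quantifies over $\tau$-open neighbourhoods of $x$, and in a general $T_{0}$ space (which is the hypothesis of Lemma \ref{6.6}) a monotone determined open set $U$ need not be $\tau$-open, so this assertion is a real leap --- it is immediate only when $\tau=\mathcal{D}(X)$, i.e.\ for monotone determined spaces, whereas the directed-set analogue used in Lemma \ref{5.8} is covered by the cited Remark \ref{2.2}(5). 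Your sublemma --- that $\mathcal{F}\rightarrow_{\tau}x$ and $x\in U\in\mathcal{D}(X)$ force $F\subseteq U$ for some $F\in\mathcal{F}$ --- is exactly the finite-family analogue of Remark \ref{2.2}(5), and your proof of it via Rudin's Lemma is sound: the family $\{F\backslash U: F\in\mathcal{F}\}$ is directed because $\mathcal{D}(X)$-open sets are upper sets, Rudin's Lemma then yields a directed $D\subseteq X\backslash U$ meeting every $F\backslash U$, the upper-set property of $\tau$-open sets upgrades ``$D$ meets every neighbourhood'' to ``$D$ is eventually in every neighbourhood,'' and the definition of monotone determined open set gives the contradiction. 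So your argument is not just a rewrite of the paper's proof: it supplies the missing lemma that the paper's one-line assertion tacitly relies on, at the cost of an extra Rudin's-Lemma argument; the paper's version is shorter but, as written, only rigorous under the stronger hypothesis that $X$ is a monotone determined space.
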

\begin{proof} Since $\mathcal{L}_{\mathcal{IS}}\subseteq \mathcal{L}_{\mathcal{IGS}}$, $\mathcal{IGS}(X)\subseteq \mathcal{IS}(X)$. By Lemma \ref{5.8}, we need only to show that $\mathcal{D}(X)\subseteq \mathcal{IGS}(X)$.

Suppose that $U\in \mathcal{D}(X)$ and $(x_{j})_{j\in J}\rightarrow _{\mathcal{IGS}}x\in U$. To show $U\in \mathcal{IGS}(X)$, we need only to show that $\{j\in J: x_{j}\notin U\}\in \mathcal{I}$. By definition \ref{6.1}, there exists a directed family $\mathcal{F}$ such that $\mathcal{F}\rightarrow_{\tau} x$ and for each $F\in \mathcal{F}$, $\{j\in J: x_{j}\notin \uparrow F\}\in \mathcal{I}$. Since $\mathcal{F}\rightarrow_{\tau} x$, there exists a $F\in \mathcal{F}$ such that $F\subseteq U$. Since $\{j\in J: x_{j}\notin U\}\subseteq \{j\in J: x_{j}\notin \uparrow F\}\in \mathcal{I}$,  $U\in \mathcal{IGS}(X)$. Thus $\mathcal{D}(X)\subseteq \mathcal{IGS}(X)$.
\end{proof}

\begin{cor}\label{6.7}   Let $X$ be a monotone determined space, then $\tau=\mathcal{D}(X)=\mathcal{IS}(X)=\mathcal{IGS}(X)$.
\end{cor}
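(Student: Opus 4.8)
The plan is to chain together the definition of a monotone determined space with the two previously established identifications of the convergence-induced families $\mathcal{IS}(X)$ and $\mathcal{IGS}(X)$. First I would observe that the leftmost equality $\tau = \mathcal{D}(X)$ is nothing more than the defining property of a monotone determined space (Definition \ref{2.1}), so it holds by hypothesis without any further argument. This disposes of one of the three equalities immediately.

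Next I would invoke Lemma \ref{6.6}, which was proved for an arbitrary $T_{0}$ topological space and asserts $\mathcal{IGS}(X) = \mathcal{IS}(X) = \mathcal{D}(X)$. Since a monotone determined space is in particular a $T_{0}$ space, this lemma applies directly to $X$ and supplies the remaining three-term chain of equalities. Note that Lemma \ref{6.6} itself already subsumes Lemma \ref{5.8} and Corollary \ref{5.9}, so no separate appeal to those is strictly necessary; one could equally assemble the same chain by combining Corollary \ref{5.9} (giving $\tau = \mathcal{D}(X) = \mathcal{IS}(X)$ for monotone determined $X$) with the inclusion $\mathcal{IGS}(X) \subseteq \mathcal{IS}(X)$ and the reverse containment $\mathcal{D}(X) \subseteq \mathcal{IGS}(X)$ from Lemma \ref{6.6}.

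Combining these observations yields $\tau = \mathcal{D}(X) = \mathcal{IS}(X) = \mathcal{IGS}(X)$, which is exactly the asserted chain. There is no genuine obstacle at this stage: all the substantive work has already been carried out in Lemmas \ref{5.8} and \ref{6.6}. Specifically, the nontrivial inclusions $\mathcal{D}(X) \subseteq \mathcal{IS}(X)$ and $\mathcal{D}(X) \subseteq \mathcal{IGS}(X)$ were verified there by running the canonical net $(x_{d})_{d\in D}$ with $x_{d} = d$ over a directed set $D \rightarrow_{\tau} x$ witnessing $\mathcal{IS}$- or $\mathcal{IGS}$-convergence, while the opposite inclusions follow from the fact that a monotone determined open set is met by every such directed set converging into it. The corollary is therefore simply the specialization of those general identities to the monotone determined setting, where the ambient topology $\tau$ coincides with $\mathcal{D}(X)$.
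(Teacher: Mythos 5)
Your proposal is correct and is precisely the paper's (implicit) argument: the corollary is stated without proof because it follows immediately from Definition \ref{2.1} (which gives $\tau=\mathcal{D}(X)$) combined with Lemma \ref{6.6} (which gives $\mathcal{IGS}(X)=\mathcal{IS}(X)=\mathcal{D}(X)$ for any $T_{0}$ space), exactly as you chain them. One minor slip in your closing recap, which does not affect the validity of the corollary's proof: in Lemmas \ref{5.8} and \ref{6.6} the canonical net $(x_{d})_{d\in D}$ is used to prove the inclusion $\mathcal{IS}(X)\subseteq\mathcal{D}(X)$, whereas the inclusions $\mathcal{D}(X)\subseteq\mathcal{IS}(X)$ and $\mathcal{D}(X)\subseteq\mathcal{IGS}(X)$ are the ones obtained from the fact that a monotone determined open set meets every directed set converging into it --- you attributed each technique to the opposite inclusion.
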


\begin{cor}\label{6.8}   Let $X$ be a locally hypercompact space, then $\{\Uparrow_{d}F:F\in \mathcal{P}^{w}(X)\}$ is a basis of $\mathcal{IGS}(X)$.
\end{cor}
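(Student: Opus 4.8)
The plan is to obtain this corollary as the conjunction of two earlier facts: the identification of the topology $\mathcal{IGS}(X)$ with $\tau$, and the base description of $\tau$ available for d-quasicontinuous spaces. First I would observe that a locally hypercompact space is automatically a monotone determined space. Indeed, by Theorem \ref{2.11} local hypercompactness is equivalent to d-quasicontinuity, and every d-quasicontinuous space is monotone determined by Definition \ref{2.8}. Hence Corollary \ref{6.7} applies and gives $\tau = \mathcal{D}(X) = \mathcal{IS}(X) = \mathcal{IGS}(X)$; in particular $\mathcal{IGS}(X) = \tau$, so it suffices to show that $\{\Uparrow_{d} F : F \in \mathcal{P}^{w}(X)\}$ is a base of $\tau$.

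For the base property I would appeal directly to Theorem \ref{2.9}(2). Since $X$ is d-quasicontinuous (again via Theorem \ref{2.11}), that theorem yields $\Uparrow_{d} F = (\uparrow F)^{\circ}$ for every $F \in \mathcal{P}^{w}(X)$, together with the assertion that $\{\Uparrow_{d} F : F \in \mathcal{P}^{w}(X)\}$ is a base of $\tau$. Combining this with $\tau = \mathcal{IGS}(X)$ from the previous step gives at once that $\{\Uparrow_{d} F : F \in \mathcal{P}^{w}(X)\}$ is a base of $\mathcal{IGS}(X)$, which is the claim. This mirrors exactly the pattern used for Corollary \ref{5.10}, where the base $\{\Ua_{d} x : x \in X\}$ of $\tau$ for a c-space is transported to $\mathcal{IS}(X)$ via the corresponding topology-identification.

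The one point requiring care, rather than a genuine obstacle, is the bookkeeping of hypotheses: Theorem \ref{2.9}(2) is phrased for d-quasicontinuous spaces while Corollary \ref{6.7} is phrased for monotone determined spaces, so the equivalence in Theorem \ref{2.11} must be invoked to pass between local hypercompactness and each of these two hypotheses. Once both topologies have been identified, the conclusion follows by transitivity of the relation ``is a base of,'' with no further computation needed.
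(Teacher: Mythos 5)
Your derivation is correct and coincides with the paper's intended (unwritten) proof of Corollary \ref{6.8}: identify $\mathcal{IGS}(X)$ with $\tau$ via Corollary \ref{6.7}, then invoke the base description $\{\Uparrow_{d}F : F\in\mathcal{P}^{w}(X)\}$ of $\tau$ from Theorem \ref{2.9}(2), exactly as Corollary \ref{5.10} transports the base $\{\Ua_{d}x : x\in X\}$ to $\mathcal{IS}(X)$. One small repair: your justification that local hypercompactness implies monotone determinedness is circular as written, since Theorem \ref{2.11} (and hence the equivalence with d-quasicontinuity) is stated only under the hypothesis that $X$ is already monotone determined; you should instead cite this implication as the standing fact the paper takes from \cite{21,27} (asserted in its introduction), after which your application of Corollary \ref{6.7} and Theorem \ref{2.9}(2) goes through verbatim.
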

\begin{thm}\label{6.9} Let $X$ be a monotone determined space, then $X$ is locally hypercompact space if and only if $(x_{j})_{j\in J}\rightarrow _{\mathcal{IGS}}x\Leftrightarrow (x_{j})_{j\in J}\rightarrow _{\mathcal{I}}x$ with respect to $\tau$.
\end{thm}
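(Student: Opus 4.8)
The plan is to prove the biconditional in Theorem~\ref{6.9} by mirroring the structure of the $\mathcal{IS}$-convergence argument in Theorem~\ref{5.11}, replacing the directed sets of single points with directed families of finite sets. The forward direction is the easy half and will follow almost immediately from the topology-identification lemmas already established. Assuming $X$ is locally hypercompact, I would first invoke Corollary~\ref{6.7} to get $\tau=\mathcal{IGS}(X)$. Then if $(x_{j})_{j\in J}\rightarrow_{\mathcal{IGS}}x$ and $x\in U\in\tau=\mathcal{IGS}(X)$, the defining property of $\mathcal{IGS}(X)$ gives $\{j\in J:x_{j}\notin U\}\in\mathcal{I}$, which is exactly $(x_{j})_{j\in J}\rightarrow_{\mathcal{I}}x$ with respect to $\tau$ by Definition~\ref{2.16}. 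For the reverse inclusion, the remark following Example~\ref{6.3} (that $\mathcal{IGS}$-convergence implies $\mathcal{I}$-convergence) handles the other containment, so the two convergence notions coincide.

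For the converse, I assume the equivalence $(x_{j})_{j\in J}\rightarrow_{\mathcal{IGS}}x\Leftrightarrow(x_{j})_{j\in J}\rightarrow_{\mathcal{I}}x$ holds for all nets and ideals, and I must show $X$ is locally hypercompact. Since $X$ is already a monotone determined space, by Theorem~\ref{2.11} it suffices to produce, for each $x\in X$, a directed family $\mathcal{F}\subseteq fin_{d}(x)$ with $\mathcal{F}\rightarrow_{\tau}x$. Following the canonical-net construction in the second half of Theorem~\ref{5.11}, I would fix $x$ and build the index set $I=\{(U,n,a)\in\mathcal{N}(x)\times\mathbb{N}\times X: a\in U\}$ with the same order (refine the neighborhood or, for equal neighborhoods, increase $n$), set $x_{(U,n,a)}=a$, and observe that this net converges to $x$ in $\tau$, hence $(x_{i})_{i\in I}\rightarrow_{\mathcal{I}_{0}}x$ and so $(x_{i})_{i\in I}\rightarrow_{\mathcal{I}_{0}\mathcal{GS}}x$. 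Unwinding Definition~\ref{6.1}, this yields a directed family $\mathcal{F}\subseteq\mathcal{P}^{w}(X)$ with $\mathcal{F}\rightarrow_{\tau}x$ and, for each $F\in\mathcal{F}$, $\{i\in I:x_{i}\notin\uparrow F\}\in\mathcal{I}_{0}$.

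The main obstacle, and the step that will require the most care, is showing that each such $F$ actually satisfies $F\ll_{d}x$, so that $\mathcal{F}\subseteq fin_{d}(x)$ and membership in Theorem~\ref{2.11}(3) is legitimate. Here I would exploit the neighborhood coordinate of the canonical net exactly as in Theorem~\ref{5.11}: the condition $\{i\in I:x_{i}\notin\uparrow F\}\in\mathcal{I}_{0}$ together with the admissibility of $\mathcal{I}_{0}$ (Lemma~\ref{2.15}) produces an index $i_{F}=(U_{F},n_{F},a_{F})$ beyond which $x_{i}\in\uparrow F$; then for every $w\in U_{F}$ the triple $(U_{F},n_{F},w)\geq i_{F}$ forces $w\in\uparrow F$, giving the crucial containment $U_{F}\subseteq\uparrow F$. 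To convert this into $F\ll_{d}x$, I take any directed $D'$ with $D'\rightarrow_{\tau}x$; since $x\in U_{F}\in\tau=\mathcal{D}(X)$, monotone determinedness gives $D'\cap U_{F}\neq\emptyset$, and any point there lies in $\uparrow F$, so $D'\cap\uparrow F\neq\emptyset$, which is precisely $F\ll_{d}x$ by Definition~\ref{2.7}. With $\mathcal{F}\subseteq fin_{d}(x)$ directed and converging to $x$, Theorem~\ref{2.11} delivers local hypercompactness. I expect the only subtlety beyond bookkeeping to be verifying that the directedness of $\mathcal{F}$ with respect to the Smyth preorder is inherited correctly from the directedness asserted by Definition~\ref{6.1}, and that the net indexing is set up so that finite sets (not just singletons) emerge naturally from the $\mathcal{IGS}$-limit.
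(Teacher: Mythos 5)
Your converse direction (the canonical-net construction on $I=\{(U,n,a)\in\mathcal{N}(x)\times\mathbb{N}\times X: a\in U\}$, extraction of a directed family $\mathcal{F}$ from the $\mathcal{I}_{0}\mathcal{GS}$-limit, the containment $U_{F}\subseteq\uparrow F$ via Lemma \ref{2.15}, and the conclusion $\mathcal{F}\subseteq fin_{d}(x)$ feeding into Theorem \ref{2.11}) is correct and essentially identical to the paper's argument. The forward direction, however, has a genuine gap. The claimed equivalence $(x_{j})_{j\in J}\rightarrow_{\mathcal{IGS}}x\Leftrightarrow(x_{j})_{j\in J}\rightarrow_{\mathcal{I}}x$ has two halves, and you prove only one of them --- twice. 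Your appeal to Corollary \ref{6.7} establishes that $\mathcal{IGS}$-convergence implies $\mathcal{I}$-convergence; the remark following Example \ref{6.3}, which you cite as handling ``the reverse inclusion,'' asserts exactly the same implication ($\mathcal{IGS}$-convergence implies $\mathcal{I}$-convergence), not its converse. So the implication $(x_{j})_{j\in J}\rightarrow_{\mathcal{I}}x\Rightarrow(x_{j})_{j\in J}\rightarrow_{\mathcal{IGS}}x$ is never addressed. This is not a cosmetic omission: that half is precisely where local hypercompactness does its work, and it cannot hold in an arbitrary monotone determined space --- if it did, the equivalence would hold in every monotone determined space, and the converse direction of the theorem (which you proved) would then force every monotone determined space to be locally hypercompact, which is false.

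To close the gap, argue as the paper does. If $J\in\mathcal{I}$, the family $\mathcal{F}=\{\{x\}\}$ trivially witnesses $(x_{j})_{j\in J}\rightarrow_{\mathcal{IGS}}x$, since then $\{j\in J: x_{j}\notin\uparrow x\}\subseteq J\in\mathcal{I}$. If $\mathcal{I}$ is non-trivial and $(x_{j})_{j\in J}\rightarrow_{\mathcal{I}}x$ with respect to $\tau$, use local hypercompactness (Theorem \ref{2.11}) to get that $fin_{d}(x)$ is a directed family with $fin_{d}(x)\rightarrow_{\tau}x$. For each $F\in fin_{d}(x)$ we have $x\in\Uparrow_{d}F$, and $\Uparrow_{d}F=(\uparrow F)^{\circ}$ is open by Theorem \ref{2.9} (see also Corollary \ref{6.8}), so $\mathcal{I}$-convergence yields $\{j\in J: x_{j}\notin\Uparrow_{d}F\}\in\mathcal{I}$; since $\Uparrow_{d}F\subseteq\uparrow F$, it follows that $\{j\in J: x_{j}\notin\uparrow F\}\subseteq\{j\in J: x_{j}\notin\Uparrow_{d}F\}\in\mathcal{I}$. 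Thus $fin_{d}(x)$ itself witnesses $(x_{j})_{j\in J}\rightarrow_{\mathcal{IGS}}x$, completing the forward direction.
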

\begin{proof} Let $x$ be locally hypercompact space. If $(x_{j})_{j\in J}\rightarrow _{\mathcal{IGS}}x$, $x\in U\in \tau$, then $U\in \mathcal{IGS}(X)$ by Corollary \ref{6.7},  $\{j\in J: x_{j}\notin U\}\in \mathcal{I}$. Hence, $(x_{j})_{j\in J}\rightarrow _{\mathcal{I}}x$ with respect to $\tau$.
Conversly, suppose that $(x_{j})_{j\in J}\rightarrow _{\mathcal{I}}x$ with respect to $\tau$. If $J\in \mathcal{I}$, let $\mathcal{F}=\{\{x\}\}$, then $\mathcal{F}\rightarrow_{\tau} x$ and $\{j\in J: x_{j}\notin \uparrow x\}\in \mathcal{I}$. Hence $(x_{j})_{j\in J}\rightarrow _{\mathcal{IGS}}x$. Let $\mathcal{I}$ be a non-trivial ideal of $J$ and $(x_{j})_{j\in J}\rightarrow _{\mathcal{I}}x$ with respect to $\tau$. Since $X$ is locally hypercompact space, $fin_{d}(x)$ is directed family and $fin_{d}(x)\rightarrow_{\tau} x$. Let $F\ll_{d} x$, then $x\in \Uparrow_{d} F\in \mathcal{IGS}(X)$ by Corollary \ref{6.8}. Since $(x_{j})_{j\in J}\rightarrow _{\mathcal{I}}x$ and $\Uparrow_{d} F\in \mathcal{IGS}(X)$, $\{j\in J: x_{j}\notin \Uparrow_{d} F\}\in \mathcal{I}$. Since $\{j\in J: x_{j}\notin \uparrow F\}\subseteq\{j\in J: x_{j}\notin \Uparrow_{d} F\}\in \mathcal{I}$, $\{j\in J: x_{j}\notin \uparrow F\}\in \mathcal{I}$. Thus $(x_{j})_{j\in J}\rightarrow _{\mathcal{IGS}}x$.

Let $(x_{j})_{j\in J}\rightarrow _{\mathcal{IGS}}x\Leftrightarrow (x_{j})_{j\in J}\rightarrow _{\mathcal{I}}x$ with respect to $\tau$. For any $x\in X$, we consider the set of all the open neighborhoods $\mathcal{N}(x)=\{U: x\in U\in \tau\}$ of $x$ and let $I=\{(U, n, a)\in \mathcal{N}(x)\times N\times X:a\in U\}$. Define an order on $I$: $(U, m, a)\leq(V, n, b)$ iff $V$ is a proper subset of $U$ or $U = V$ and $m\leq n$. For each $i=(U, n, a)$, let $x_{i}= a$. Then $(x_{i})_{i\in I}\rightarrow_{\tau} x$. Thus, $(x_{i})_{i\in I}\rightarrow_{\mathcal{I}_{0}} x$, i.e. $(x_{i})_{i\in I}\rightarrow_{\mathcal{I}_{0}\mathcal{GS}} x$. By Definition \ref{6.1}, there exists a directed family $\mathcal{F}$ such that $\mathcal{F}\rightarrow_{\tau} x$ and for any $F\in \mathcal{F}$, $\{j\in J: x_{j}\notin \uparrow F\}\in \mathcal{I}_{0}$. By Lemma \ref{2.15}, for any $F\in\mathcal{F}$, there exists $i_{F} = (U_{F}, n_{F}, a_{F})\in I$ such that $x_{i}\geq x_{i_{F}}$ for any $i\geq i_{F}$. For any $w\in U_{F}$, $i=(U_{F}, n_{F}, w)\geq (U_{F}, n_{F}, a_{F})=i_{F}$, then $w\geq a_{F}=x_{i_{F}}$ and $w\in U_{F}$. So, for any $F\in \mathcal{F}$, there exists $U_{F}$ such that $U_{F}\subseteq \uparrow F$.

For any $F\in \mathcal{F}$, let $D'\subseteq X$ be a directed set and $D'\rightarrow_{\tau} x$. Since $x\in U_{F}\in \tau=d(x)$, $U_{F}\cap D'\neq \emptyset$. Let $d'\in U_{d}\cap D'\subseteq \uparrow F$, then $d'\in \uparrow F$. Hence $F\ll_{d} x$. Since $\mathcal{F}\subseteq fin_{d} (X)$, $\mathcal{F}\rightarrow_{\tau} x$ and $X$ is monotone determined space, $X$ is locally hypercompact space by Theorem \ref{2.11}. $\Box$
\end{proof}
\begin{cor}\label{6.10} Let $X$ be a monotone determined space. $\mathcal{L}_{\mathcal{IGS}}$ is topological iff $X$ is locally hypercompact space iff $X$ is a d-quasicontinuous space.
\end{cor}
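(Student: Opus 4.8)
The plan is to reduce the statement to the two results already available: Theorem \ref{2.11}, which for a monotone determined space identifies locally hypercompact spaces with d-quasicontinuous spaces, and Theorem \ref{6.9}, which characterizes local hypercompactness by the coincidence of $\mathcal{IGS}$-convergence with $\mathcal{I}$-convergence relative to the given topology $\tau$. Accordingly I would split the asserted chain into the trivial equivalence ``$X$ locally hypercompact $\Leftrightarrow$ $X$ d-quasicontinuous'', which is immediate from Theorem \ref{2.11}, and the substantive equivalence ``$\mathcal{L}_{\mathcal{IGS}}$ topological $\Leftrightarrow$ $X$ locally hypercompact'', on which the argument concentrates.

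For the direction starting from local hypercompactness I would simply invoke Theorem \ref{6.9}, which gives $(x_{j})_{j\in J}\rightarrow_{\mathcal{IGS}} x \Leftrightarrow (x_{j})_{j\in J}\rightarrow_{\mathcal{I}} x$ with respect to $\tau$ for every net and every ideal. By the definition of a topological convergence class, the topology $\tau$ itself then witnesses that $\mathcal{L}_{\mathcal{IGS}}$ is topological, so nothing further is needed here.

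The converse carries the real content. Suppose $\mathcal{L}_{\mathcal{IGS}}$ is topological, witnessed by some topology $\tau'$, so that $(x_{j})_{j\in J}\rightarrow_{\mathcal{IGS}} x$ iff $(x_{j})_{j\in J}\rightarrow_{\mathcal{I}} x$ with respect to $\tau'$. The crucial step is to prove that $\tau'$ is forced to coincide with $\tau$. First, $\tau'\subseteq\mathcal{IGS}(X)$: if $U\in\tau'$ and $((x_{j})_{j\in J},\mathcal{I},x)\in\mathcal{L}_{\mathcal{IGS}}$ with $x\in U$, then $(x_{j})_{j\in J}\rightarrow_{\mathcal{I}} x$ with respect to $\tau'$, and since $U$ is a $\tau'$-open neighborhood of $x$, Definition \ref{2.16} gives $\{j\in J: x_{j}\notin U\}\in\mathcal{I}$, whence $U\in\mathcal{IGS}(X)$. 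Conversely, $\mathcal{IGS}(X)\subseteq\tau'$: specializing the defining condition of $\mathcal{IGS}(X)$ to the ideal $\mathcal{I}_{0}$ and recalling that $\mathcal{I}_{0}$-convergence is exactly ordinary convergence, the condition for $U\in\mathcal{IGS}(X)$ becomes ``every net $\tau'$-converging to a point of $U$ is eventually in $U$'', which is precisely the net characterization of $\tau'$-openness. Hence $\tau'=\mathcal{IGS}(X)$, and by Lemma \ref{6.6} together with Corollary \ref{6.7} one has $\mathcal{IGS}(X)=\mathcal{D}(X)=\tau$, so $\tau'=\tau$. The coincidence of $\mathcal{IGS}$- and $\mathcal{I}$-convergence now holds relative to $\tau$, and Theorem \ref{6.9} delivers that $X$ is locally hypercompact.

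I expect the main obstacle to be the inclusion $\mathcal{IGS}(X)\subseteq\tau'$, that is, pinning down the witness topology. The delicate point is that $\tau'$ is a priori an arbitrary unknown topology, and one must exploit that $\mathcal{L}_{\mathcal{IGS}}$ constrains $\tau'$ through \emph{all} admissible ideals simultaneously, not merely through $\mathcal{I}_{0}$; it is precisely the reduction to the ordinary net characterization of open sets via $\mathcal{I}_{0}$ that closes this gap and allows Corollary \ref{6.7} to be brought to bear. The remaining verifications are routine bookkeeping layered on top of Theorems \ref{2.11} and \ref{6.9}.
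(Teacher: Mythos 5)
Your proposal is correct and follows the route the paper intends: the paper states this corollary without proof, as an immediate consequence of Theorem \ref{2.11} (for monotone determined spaces, locally hypercompact coincides with d-quasicontinuous) and Theorem \ref{6.9} (local hypercompactness is equivalent to the coincidence of $\mathcal{IGS}$-convergence with $\mathcal{I}$-convergence relative to $\tau$). The one place where you go beyond what the paper records is exactly the point you flag as the real content: Theorem \ref{6.9} speaks only about the given topology $\tau$, while topologicality of $\mathcal{L}_{\mathcal{IGS}}$ a priori permits an arbitrary witness topology $\tau'$. Your identification of the witness is sound: $\tau'\subseteq\mathcal{IGS}(X)$ follows directly from Definition \ref{2.16}, and $\mathcal{IGS}(X)\subseteq\tau'$ follows by specializing to $\mathcal{I}_{0}$, using that $\mathcal{I}_{0}$-convergence is ordinary convergence and the standard net characterization of open sets; Corollary \ref{6.7} then gives $\tau'=\mathcal{IGS}(X)=\mathcal{D}(X)=\tau$, after which Theorem \ref{6.9} applies. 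This argument parallels the technique already used in Lemmas \ref{5.8} and \ref{6.6}, so your proof is essentially the intended one, with the witness-identification step made explicit where the paper leaves it implicit.
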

\begin{defn}\label{6.11} Let $X$ be a $T_{0}$ topological space. A net $(x_{j})_{j\in J}\subseteq X$ is said to $\mathcal{IGSL}$-converge
to a point $x\in X$, where $\mathcal{I}$ is an ideal of $J$, if $(x_{j})_{j\in J}\rightarrow _{\mathcal{IGS}}x$ and for any $\{j\in J: x_{j}\notin \uparrow F\}\in \mathcal{I}$, $x\in \uparrow F$. In this case the point $x$ is called the $\mathcal{IGSL}$-limit of the net $(x_{j})_{j\in J}$ and we write $(x_{j})_{j\in J}\rightarrow _{\mathcal{IGSL}}x$.
\end{defn}
Let
\begin{center}
$\mathcal{IGSL}(X)=\{U\subseteq X:$ for any $(x_{j})_{j\in J}\rightarrow _{\mathcal{IGSL}}x$, $x\in U$ implies $\{j\in J: x_{j}\notin U\}\in \mathcal{I}\}$.
\end{center}

\begin{prop}\label{6.12} Let $X$ be a monotone determined space. Then $\lambda(X)\subseteq \mathcal{IGSL}(X)$.
\end{prop}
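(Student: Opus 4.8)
The plan is to follow the same two-stage strategy used for $\lambda(X)\subseteq\mathcal{ISL}(X)$ in Proposition \ref{5.15}. First I would record that $\mathcal{IGSL}(X)$ is itself a topology: exactly as in the proof of Proposition \ref{5.7}, one checks $X,\emptyset\in\mathcal{IGSL}(X)$, while closure under arbitrary unions and finite intersections follows from the downward closure of $\mathcal{I}$ together with the inclusions $\{j\in J: x_{j}\notin\bigcup_{k}U_{k}\}\subseteq\{j\in J: x_{j}\notin U_{k_{0}}\}$ and the identity $\{j\in J: x_{j}\notin U_{1}\cap U_{2}\}=\{j\in J: x_{j}\notin U_{1}\}\cup\{j\in J: x_{j}\notin U_{2}\}$. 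Since $\lambda(X)$ is generated by the subbasis $\tau\cup\omega(X)$, and $\omega(X)$ is in turn generated by the sets $X\backslash\uparrow y$ with $y\in X$, it then suffices to prove that every member of $\tau$ and every set of the form $X\backslash\uparrow y$ lies in $\mathcal{IGSL}(X)$.

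For an open set $U\in\tau$ the argument is routine and parallels part (1) of Proposition \ref{5.15}. I would take any net with $(x_{j})_{j\in J}\rightarrow_{\mathcal{IGSL}}x$ and $x\in U$. By Definition \ref{6.11} this gives $(x_{j})_{j\in J}\rightarrow_{\mathcal{IGS}}x$, so by Definition \ref{6.1} there is a directed family $\mathcal{F}\subseteq\mathcal{P}^{w}(X)$ with $\mathcal{F}\rightarrow_{\tau}x$ and $\{j\in J: x_{j}\notin\uparrow F\}\in\mathcal{I}$ for every $F\in\mathcal{F}$. Because $\mathcal{F}\rightarrow_{\tau}x$ and $x\in U\in\tau$, some $F\in\mathcal{F}$ satisfies $F\subseteq U$, whence $\uparrow F\subseteq\uparrow U=U$ (open sets being upper sets). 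Then $\{j\in J: x_{j}\notin U\}\subseteq\{j\in J: x_{j}\notin\uparrow F\}\in\mathcal{I}$, so $\{j\in J: x_{j}\notin U\}\in\mathcal{I}$ and $U\in\mathcal{IGSL}(X)$.

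The substantive step is the lower-topology part, showing $V=X\backslash\uparrow y\in\mathcal{IGSL}(X)$ for each $y\in X$, and this is where the extra ``L'' clause of Definition \ref{6.11} is designed to be used. Given $(x_{j})_{j\in J}\rightarrow_{\mathcal{IGSL}}x$ with $x\in V$, I would apply the defining ``L'' condition to the singleton $F=\{y\}$, for which $\uparrow F=\uparrow y$: the clause asserts that once $\{j\in J: x_{j}\notin\uparrow y\}\in\mathcal{I}$ one must have $x\in\uparrow y$. Since $x\in V$ forces $x\notin\uparrow y$, this is exactly the mechanism that excludes the convergent nets sitting above $y$, mirroring part (2) of Proposition \ref{5.15}. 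The main obstacle I anticipate is precisely here: converting this ``L'' clause into the membership requirement $\{j\in J: x_{j}\notin V\}\in\mathcal{I}$ demanded by the definition of $\mathcal{IGSL}(X)$, since this is the only place where the directed-family data coming from $\mathcal{IGS}$-convergence is not by itself sufficient and the $\mathcal{IGSL}$-refinement must be invoked with the correct finite set $F=\{y\}$ so that $\uparrow F$ matches the principal filter defining $V$. Once this lower-open case is secured, combining it with the open-set case and the reduction in the first paragraph yields $\lambda(X)\subseteq\mathcal{IGSL}(X)$.
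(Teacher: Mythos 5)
Your reduction to the subbasis $\tau\cup\omega(X)$ and your treatment of the $\tau$-case are fine; they amount to the same argument the paper packages as $\tau=\mathcal{D}(X)=\mathcal{IGS}(X)\subseteq\mathcal{IGSL}(X)$ via Lemma \ref{6.6}. But the lower-topology case, which you yourself flag as ``the main obstacle,'' is a genuine gap, and it cannot be closed. Membership of $V=X\backslash\uparrow y$ in $\mathcal{IGSL}(X)$ requires: whenever $(x_{j})_{j\in J}\rightarrow_{\mathcal{IGSL}}x$ and $x\notin\uparrow y$, then $\{j\in J: x_{j}\in\uparrow y\}\in\mathcal{I}$. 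What the ``L'' clause of Definition \ref{6.11} gives you, applied contrapositively with $F=\{y\}$, is only: if $x\notin\uparrow y$ then $\{j\in J: x_{j}\notin\uparrow y\}\notin\mathcal{I}$. Since an ideal need not be prime, knowing that the set $\{j\in J: x_{j}\notin\uparrow y\}$ is \emph{not} in $\mathcal{I}$ says nothing about whether its complement $\{j\in J: x_{j}\in\uparrow y\}$ is in $\mathcal{I}$: ``not ideal-eventually above $y$'' is far weaker than ``ideal-eventually not above $y$.'' Your proposal never bridges this, so the inclusion $\omega(X)\subseteq\mathcal{IGSL}(X)$ is not established.

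You should also know that the paper's own proof has exactly the same hole: its part (2) defers to Proposition \ref{5.15}(2), which only checks nets wholly contained in $\uparrow y$ (for which the conclusion is vacuous, since the ``L'' clause then forces $x\in\uparrow y$), and says nothing about nets that are frequently but not ideal-eventually in $\uparrow y$. In fact, with the definitions as stated the step is false, not merely unproven. Take the space $X=\mathbb{N}\cup\{a,\infty\}$ of Example \ref{6.3} with the Scott topology, the net $x_{2n}=n$, $x_{2n+1}=a$ over $J=\mathbb{N}$, and $\mathcal{I}=\mathcal{I}_{0}(\mathbb{N})$ (the finite subsets). As the paper itself notes, $(x_{n})_{n\in\mathbb{N}}\rightarrow_{\mathcal{IGS}}a$ via the directed family $\mathcal{F}=\{\{a,k\}: k\in\mathbb{N}\}$; moreover the ``L'' clause holds, because if $\{n\in\mathbb{N}: x_{n}\notin\uparrow F\}$ is finite then, cofinally many terms being equal to $a$, necessarily $a\in\uparrow F$. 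Hence $(x_{n})_{n\in\mathbb{N}}\rightarrow_{\mathcal{IGSL}}a$. Yet for any $k\in\mathbb{N}$ the lower-open set $V=X\backslash\uparrow k$ contains $a$, while $\{n\in\mathbb{N}: x_{n}\notin V\}=\{2m: m\geq k\}\notin\mathcal{I}_{0}$, so $V\in\lambda(X)\backslash\mathcal{IGSL}(X)$. Thus no argument can complete your (or the paper's) plan as stated; the obstruction disappears only if the ``L'' condition in Definition \ref{6.11} is strengthened to a ``frequent'' form, e.g.\ requiring $x\in\uparrow F$ whenever $\{j\in J: x_{j}\in\uparrow F\}\notin\mathcal{I}$, under which your computation with $F=\{y\}$ goes through immediately.
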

\begin{proof}We need only to prove that any $U\in \tau\cup \omega(X)$ belongs to $\mathcal{IGSL}(X)$.

(1)  Since $(x_{j})_{j\in J}\rightarrow _{\mathcal{IGSL}}x$ implies $(x_{j})_{j\in J}\rightarrow _{\mathcal{IGS}}x$, then $\mathcal{IGS}(X)\subseteq\mathcal{IGSL}(X)$. By Lemma \ref{6.6}, $d(X)\subseteq \mathcal{IGSL}(X)$.

(2) For $V=X\backslash\uparrow y$ for any $y\in X$,  the proof is analogous to that of  Proposition \ref{5.15}.

By (1) and (2), we conclude that $\lambda(X)\subseteq \mathcal{IGSL}(X)$.
\end{proof}

\begin{prop}\label{6.13} Let $X$ be a monotone determined space, then $\mathcal{IGSL}(X)\subseteq \mathcal{ISL}(X)$, that is, $(x_{j})_{j\in J}\rightarrow _{\mathcal{ISL}}x\Rightarrow (x_{j})_{j\in J}\rightarrow _{\mathcal{IGSL}}x$.
\end{prop}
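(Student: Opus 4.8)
The plan is to reduce the set inclusion $\mathcal{IGSL}(X)\subseteq\mathcal{ISL}(X)$ to the convergence implication displayed in the statement, and then to establish that implication. For the reduction, I would take $U\in\mathcal{IGSL}(X)$ together with an arbitrary net $(x_{j})_{j\in J}\rightarrow_{\mathcal{ISL}}x$ such that $x\in U$; if I can show $(x_{j})_{j\in J}\rightarrow_{\mathcal{IGSL}}x$, then the defining property of $\mathcal{IGSL}(X)$ forces $\{j\in J:x_{j}\notin U\}\in\mathcal{I}$, which is exactly the condition needed to place $U$ in $\mathcal{ISL}(X)$. So everything comes down to proving $(x_{j})_{j\in J}\rightarrow_{\mathcal{ISL}}x\Rightarrow(x_{j})_{j\in J}\rightarrow_{\mathcal{IGSL}}x$.

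Unwinding Definition \ref{6.11}, that implication splits into two tasks: first, that $(x_{j})_{j\in J}\rightarrow_{\mathcal{IGS}}x$; and second, the ``Lawson'' clause, namely that $\{j\in J:x_{j}\notin\uparrow F\}\in\mathcal{I}$ forces $x\in\uparrow F$ for every $F\in\mathcal{P}^{w}(X)$. The first is immediate: by Definition \ref{5.13}, $(x_{j})_{j\in J}\rightarrow_{\mathcal{ISL}}x$ entails $(x_{j})_{j\in J}\rightarrow_{\mathcal{IS}}x$, and Proposition \ref{6.2} upgrades $\mathcal{IS}$-convergence to $\mathcal{IGS}$-convergence. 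The real content is the second clause, and this is where I expect the main obstacle: the $\mathcal{ISL}$-limit only carries a Lawson clause phrased for single elements $y$ (via $\uparrow y$), whereas the $\mathcal{IGSL}$ clause is phrased for finite sets $F$ (via $\uparrow F$), so I must bridge from principal filters to finitely generated ones.

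The bridge I intend to use is Proposition \ref{5.15}, which gives $\lambda(X)\subseteq\mathcal{ISL}(X)$, and in particular $X\setminus\uparrow f\in\mathcal{ISL}(X)$ for every $f\in X$. Fixing $F\in\mathcal{P}^{w}(X)$ with $\{j\in J:x_{j}\notin\uparrow F\}\in\mathcal{I}$, I would argue by contradiction, assuming $x\notin\uparrow F$. Then $f\nleq x$ for each $f\in F$, so $x\in X\setminus\uparrow f\in\mathcal{ISL}(X)$; since $(x_{j})_{j\in J}\rightarrow_{\mathcal{ISL}}x$, the defining property of $\mathcal{ISL}(X)$ yields $\{j\in J:x_{j}\notin X\setminus\uparrow f\}=\{j\in J:x_{j}\geq f\}\in\mathcal{I}$ for each $f\in F$. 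Because $F$ is finite and $\mathcal{I}$ is closed under finite unions, $\{j\in J:x_{j}\in\uparrow F\}=\bigcup_{f\in F}\{j\in J:x_{j}\geq f\}\in\mathcal{I}$. Combined with the hypothesis $\{j\in J:x_{j}\notin\uparrow F\}\in\mathcal{I}$, this gives $J\in\mathcal{I}$.

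Finally I would cash in this conclusion against the Lawson clause of the $\mathcal{ISL}$-limit (Definition \ref{5.13}). Picking any $f\in F$, which is legitimate since $F$ is nonempty, we have $\{j\in J:x_{j}\ngeq f\}\subseteq J\in\mathcal{I}$, so $\{j\in J:x_{j}\ngeq f\}\in\mathcal{I}$, whence $x\in\uparrow f\subseteq\uparrow F$, contradicting $x\notin\uparrow F$. Hence $x\in\uparrow F$, the $\mathcal{IGSL}$ clause holds, and $(x_{j})_{j\in J}\rightarrow_{\mathcal{IGSL}}x$; via the reduction this delivers $\mathcal{IGSL}(X)\subseteq\mathcal{ISL}(X)$. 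The only delicate point is the passage from single elements to finite sets, and the finiteness of $F$ together with stability of $\mathcal{I}$ under finite unions is precisely what makes that passage go through.
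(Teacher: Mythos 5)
Your proof is correct, and its skeleton is the same as the paper's: obtain $(x_{j})_{j\in J}\rightarrow_{\mathcal{IGS}}x$ from Definition \ref{5.13} and Proposition \ref{6.2}, then establish the Lawson clause by contradiction, using the fact that the complement of a finitely generated upper set is Lawson open and that $\mathcal{I}$ is closed under finite unions, so that $J=\{j\in J: x_{j}\in \uparrow F\}\cup\{j\in J: x_{j}\notin \uparrow F\}\in\mathcal{I}$. Two differences are worth recording. First, the paper splits into cases on whether $\mathcal{I}$ is trivial (for a trivial ideal the $\mathcal{ISL}$ clause makes $x$ an upper bound of all of $X$; for a non-trivial one the identity above contradicts non-triviality), whereas your argument is uniform: you never assume non-triviality, and instead feed the conclusion $J\in\mathcal{I}$ back into the $\mathcal{ISL}$ Lawson clause to get $x\in\uparrow f\subseteq\uparrow F$, contradicting $x\notin\uparrow F$; this eliminates the case distinction cleanly. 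Second, and more substantively, you cite the right lemma: the paper's text justifies $\{j\in J: x_{j}\notin X\backslash \uparrow F\}\in\mathcal{I}$ by writing $X\backslash\uparrow F\in\lambda(X)\subseteq\mathcal{IGSL}(X)$ (Proposition \ref{6.12}), but membership in $\mathcal{IGSL}(X)$ can only be played against $\mathcal{IGSL}$-convergent nets, i.e., against the very convergence being proved; what the step actually requires is $\lambda(X)\subseteq\mathcal{ISL}(X)$, Proposition \ref{5.15}, which is exactly what you invoke (your use of $X\backslash\uparrow f$ for each $f\in F$ instead of $X\backslash\uparrow F$ is an immaterial variation). So your write-up not only closes the argument but repairs a citation slip in the paper's own proof.
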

\begin{proof}If $(x_{j})_{j\in J}\rightarrow _{\mathcal{ISL}}x$, then $(x_{j})_{j\in J}\rightarrow _{\mathcal{IS}}x$ and for any $\{j\in J: x_{j}\ngeq y\}\in \mathcal{I}$, $x\in \uparrow y$ by Definition \ref{5.13}. By Proposition \ref{6.2}, $(x_{j})_{j\in J}\rightarrow _{\mathcal{IGS}}x$.

(1)Let $\mathcal{I}$ trivial ideal of $J$, then $\forall y\in X$, $\{j\in J: x_{j}\ngeq y\}\in \mathcal{I}$, $x\in \uparrow y$, that is, $y\leq x$. Thus, if $\{j\in J: x_{j}\notin \uparrow F\}\in \mathcal{I}$, then $x\in \uparrow F$.

(2)If $\mathcal{I}$ is a non-trivial ideal of $J$. Let $\{j\in J: x_{j}\notin \uparrow F\}\in \mathcal{I}$. If $x\notin \uparrow F$, then $x\in X\backslash \uparrow F$, Since $(x_{j})_{j\in J}\rightarrow _{\mathcal{ISL}}x$ and $X\backslash \uparrow F\in \lambda(X)\subseteq \mathcal{IGSL}(X)$, $\{j\in J: x_{j}\notin X\backslash \uparrow F\}\in \mathcal{I}$. Hence $\{j\in J: x_{j}\in \uparrow F\}\in \mathcal{I}$, $J=\{j\in J: x_{j}\in \uparrow F\}\cup\{j\in J: x_{j}\notin \uparrow F\}\in \mathcal{I}$. Contradiction with $\mathcal{I}$ be a non-trivial ideal of $J$, $x\in \uparrow F$.

By (1) and (2), we conclude that $(x_{j})_{j\in J}\rightarrow _{\mathcal{ISL}}x\Rightarrow (x_{j})_{j\in J}\rightarrow _{\mathcal{IGSL}}x$.
\end{proof}

\begin{thm}\label{6.14} Let $X$ be a locally hypercompact space, $(x_{j})_{j\in J}\subseteq X$ be a net and $\mathcal{I}$ be a non-trivial ideal of $J$. Then $(x_{j})_{j\in J}\rightarrow _{\mathcal{IGSL}}x\Leftrightarrow (x_{j})_{j\in J}\rightarrow _{\mathcal{I}}x$ with respect to $\lambda(X)$.
\end{thm}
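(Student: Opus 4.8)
The plan is to mirror the proof of Theorem \ref{5.16}, with locally hypercompact spaces and $\mathcal{IGSL}$-convergence playing the roles previously held by c-spaces and $\mathcal{ISL}$-convergence, and with the finite-set basis $\{\Uparrow_{d} F : F \in \mathcal{P}^{w}(X)\}$ of Corollary \ref{6.8} replacing the point basis $\{\Ua_{d} x : x \in X\}$. For the forward direction ($\Rightarrow$), I would simply invoke Proposition \ref{6.12}, which gives $\lambda(X)\subseteq \mathcal{IGSL}(X)$. Thus whenever $(x_{j})_{j\in J}\rightarrow_{\mathcal{IGSL}} x$ and $U\in \lambda(X)$ contains $x$, the very definition of the collection $\mathcal{IGSL}(X)$ forces $\{j\in J: x_{j}\notin U\}\in \mathcal{I}$; hence $(x_{j})_{j\in J}\rightarrow_{\mathcal{I}} x$ with respect to $\lambda(X)$.

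For the reverse direction ($\Leftarrow$), assume $(x_{j})_{j\in J}\rightarrow_{\mathcal{I}} x$ with respect to $\lambda(X)$, and verify the two requirements of Definition \ref{6.11} separately. First I would establish the lower (Lawson) condition: suppose $F\in \mathcal{P}^{w}(X)$ satisfies $\{j\in J: x_{j}\notin \uparrow F\}\in \mathcal{I}$, and assume for contradiction that $x\notin \uparrow F$. Since $X\backslash \uparrow F = \bigcap_{f\in F}(X\backslash \uparrow f)$ is a finite intersection of lower-topology open sets, it belongs to $\omega(X)\subseteq \lambda(X)$ and contains $x$; therefore $\{j\in J: x_{j}\in \uparrow F\}=\{j\in J: x_{j}\notin X\backslash \uparrow F\}\in \mathcal{I}$. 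Taking the union with the hypothesis yields $J\in \mathcal{I}$, contradicting non-triviality of $\mathcal{I}$, so $x\in \uparrow F$.

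Next I would establish $\mathcal{IGS}$-convergence. Since $X$ is locally hypercompact, it is d-quasicontinuous by Theorem \ref{2.11}, so $fin_{d}(x)$ is a directed family with $fin_{d}(x)\rightarrow_{\tau} x$, and for each $F\in fin_{d}(x)$ we have $x\in \Uparrow_{d} F\in \mathcal{IGS}(X)\subseteq \lambda(X)$ by Corollary \ref{6.8} together with $\mathcal{IGS}(X)=\mathcal{D}(X)=\tau\subseteq \lambda(X)$ from Corollary \ref{6.7}. Then $(x_{j})_{j\in J}\rightarrow_{\mathcal{I}} x$ with respect to $\lambda(X)$ forces $\{j\in J: x_{j}\notin \Uparrow_{d} F\}\in \mathcal{I}$, whence $\{j\in J: x_{j}\notin \uparrow F\}\subseteq \{j\in J: x_{j}\notin \Uparrow_{d} F\}\in \mathcal{I}$. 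As this holds for every $F$ in the directed convergent family $fin_{d}(x)$, Definition \ref{6.1} gives $(x_{j})_{j\in J}\rightarrow_{\mathcal{IGS}} x$. Combining the two requirements via Definition \ref{6.11} yields $(x_{j})_{j\in J}\rightarrow_{\mathcal{IGSL}} x$.

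The main obstacle I anticipate is this last $\mathcal{IGS}$-convergence step, as it is where the full strength of local hypercompactness enters: one must correctly recognize $\Uparrow_{d} F$ as a $\lambda(X)$-open neighborhood of $x$ for each $F\in fin_{d}(x)$, which hinges on the basis description of Corollary \ref{6.8} and the chain $\mathcal{IGS}(X)=\mathcal{D}(X)=\tau\subseteq \lambda(X)$. By contrast, the lower-topology condition is a routine non-triviality argument essentially identical to the corresponding part of the proof of Theorem \ref{5.16}.
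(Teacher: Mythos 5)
Your proposal is correct and follows essentially the same route as the paper's own proof: the forward direction via $\lambda(X)\subseteq \mathcal{IGSL}(X)$ (Proposition \ref{6.12}), and the reverse direction by first deriving $x\in\uparrow F$ from non-triviality of $\mathcal{I}$ via the Lawson-open set $X\backslash\uparrow F$, then obtaining $\mathcal{IGS}$-convergence from the directed family $fin_{d}(x)$ and the basic opens $\Uparrow_{d}F\in\mathcal{IGS}(X)=\tau\subseteq\lambda(X)$. Your write-up is in fact slightly more careful than the paper's at two points (justifying $X\backslash\uparrow F\in\omega(X)$ as a finite intersection, and spelling out the chain $\mathcal{IGS}(X)=\mathcal{D}(X)=\tau\subseteq\lambda(X)$), but the argument is the same.
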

\begin{proof} Similarly with the case of c-spaces, we need only to show that for any net $(x_{j})_{j\in J}\rightarrow _{\mathcal{I}}x$ with respect to $\lambda(X)$, $(x_{j})_{j\in J}\rightarrow _{\mathcal{IGSL}}x$.

Let $\mathcal{I}$ be a non-trivial ideal of $J$ and $\{j\in J: x_{j}\notin \uparrow F\}\in \mathcal{I}$. If $x\notin \uparrow F$, then $X\backslash \uparrow F\in \lambda(X)$ and $x\in X\backslash \uparrow F$. So, $\{j\in J: x_{j}\notin X\backslash \uparrow F\}\in \mathcal{I}$. Then $J=\{j\in J: x_{j}\notin \uparrow F\}\cup \{j\in J: x_{j}\notin X\backslash \uparrow F\}\in \mathcal{I}$. Contradiction with $\mathcal{I}$ be a non-trivial ideal of $J$, $x\in \uparrow F$.

Since $X$ is locally hypercompact space, $fin_{d}(x)$ is directed family and $fin_{d}(x)\rightarrow_{\tau} x$. Let $F\ll_{d} x$, then $x\in \Uparrow_{d} a\in \mathcal{IGS}(X)$ by Corollary \ref{6.8}. Since $(x_{j})_{j\in J}\rightarrow _{\mathcal{I}}x$ and $\Uparrow_{d} F\in \mathcal{IGS}(X)$, $\{j\in J: x_{j}\notin \Uparrow_{d} F\}\in \mathcal{I}$. Hence $\{j\in J: x_{j}\notin \uparrow F\}\subseteq\{j\in J: x_{j}\notin \Uparrow_{d} F\}\in \mathcal{I}$ and $(x_{j})_{j\in J}\rightarrow _{\mathcal{IGS}}x$. Therefore  $(x_{j})_{j\in J}\rightarrow _{\mathcal{IGSL}}x$.
\end{proof}

\begin{thm}\label{6.15}\cite{21,27} Let $X$ be a monotone determined space. Then $X$ is c-space iff $X$ is d-quasicontinuous and d-meet continuous.
\end{thm}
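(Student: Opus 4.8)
The plan is to prove both implications, moving freely between ``c-space''/d-continuity via Theorem \ref{2.5} and ``locally hypercompact''/d-quasicontinuity via Theorem \ref{2.11}. The forward implication splits into two independent verifications, and the converse reduces (after manufacturing one-step closure) to a single collapsing step that is the real heart of the matter.

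For the forward direction, assume $X$ is a c-space. That $X$ is d-meet continuous is immediate from the machinery already in place: $X$ has one-step closure by Proposition \ref{3.7}, and any monotone determined space with one-step closure is d-meet continuous by Proposition \ref{3.8}. That $X$ is d-quasicontinuous I would obtain from d-continuity: by Theorem \ref{2.5}, $\Da_d x$ is directed with $\Da_d x\rightarrow_{\tau}x$ for every $x$. Since $a\ll_d x$ is the same as $\{a\}\ll_d x$, the family $\mathcal{F}_x=\{\{a\}:a\ll_d x\}$ is contained in $fin_d(x)$; it is directed in the Smyth preorder because $\Da_d x$ is directed, and it converges to $x$ because $\Da_d x\rightarrow_{\tau}x$. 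Condition (3) of Theorem \ref{2.11} then gives that $X$ is d-quasicontinuous, i.e.\ locally hypercompact.

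For the converse, assume $X$ is d-quasicontinuous and d-meet continuous. First I would manufacture one-step closure: being locally hypercompact, $X$ has weak one-step closure by Proposition \ref{4.3}, and Proposition \ref{4.6} upgrades this to full one-step closure using d-meet continuity. Proposition \ref{3.9} then supplies the pointwise identity $int(\uparrow a)=\Ua_d a$ for every $a$, so that $a\ll_d y\Leftrightarrow y\in int(\uparrow a)$. It thus suffices to verify the c-space condition in the form: for every open $U$ and every $y\in U$ there is a \emph{single} $a\in U$ with $y\in int(\uparrow a)$. By Theorem \ref{2.9}(2) I would fix a finite $F\subseteq U$ with $y\in(\uparrow F)^{\circ}\subseteq\uparrow F\subseteq U$, and then apply the interpolation of Theorem \ref{2.9}(1) to arrange the approximating finite set inside the open set $(\uparrow F)^{\circ}$ (this is legitimate since $G\ll_d y$ with $F\ll_d G$ forces $G\subseteq\Uparrow_d F=(\uparrow F)^{\circ}$).

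The main obstacle is the final collapse of the finite set $F$ to a single element, and this is precisely where d-meet continuity is indispensable: for a merely d-quasicontinuous space the passage from $y\in(\uparrow F)^{\circ}$ to $y\in int(\uparrow a)$ for one $a$ genuinely fails, as Example \ref{6.3} shows. I would run the collapse by induction on $|F|$, the base case $|F|=1$ being immediate because $(\uparrow\{a\})^{\circ}=int(\uparrow a)$. For the inductive merge I would feed $fin_d(y)$ into Rudin's Lemma (Lemma \ref{4.2}) to obtain a directed set converging to $y$, then use d-meet continuity in the form of Lemma \ref{2.14} (that $\uparrow(V\cap\downarrow p)\in\tau$ for open $V$ and a point $p$) together with the one-step closure established above to pull this data down into $\downarrow y$ and produce a single directed set that both converges to $y$ and lies cofinally below $y$. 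The delicate point, which I expect to be the crux of the entire argument, is to verify that the element so extracted satisfies $a\ll_d y$ (equivalently $y\in int(\uparrow a)$) rather than merely $a\le y$; the strong hitting property of the Rudin set against every $F\in fin_d(y)$, combined with the identity $int(\uparrow a)=\Ua_d a$, is what should close this gap. Once this single-point condition holds for all $y\in U$, $X$ is a c-space, completing the equivalence.
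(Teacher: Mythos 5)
Your forward direction is correct and complete: c-space $\Rightarrow$ one-step closure $\Rightarrow$ d-meet continuity via Propositions \ref{3.7} and \ref{3.8}, and the singleton family $\{\{a\}: a\ll_{d} x\}$, which is directed, contained in $fin_{d}(x)$, and convergent to $x$ because $\Da_{d} x$ is directed and converges to $x$, gives local hypercompactness through Theorem \ref{2.11}(3). Note also that the paper itself offers no proof of Theorem \ref{6.15} --- it is quoted from \cite{21,27} --- so your argument has to stand entirely on its own.

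In the converse direction it does not stand. Your reduction is fine (local hypercompactness $\Rightarrow$ weak one-step closure by Proposition \ref{4.3}, upgraded to one-step closure by Proposition \ref{4.6}, then $int(\uparrow a)=\Ua_{d} a$ by Proposition \ref{3.9}), and it correctly isolates what remains: for $y\in U\in\tau$ one must produce a \emph{single} $a\in U$ with $a\ll_{d} y$. But your mechanism for producing $a$ fails, and you concede as much. Feeding $fin_{d}(y)$ into Rudin's Lemma and pulling the resulting set down below $y$ via d-meet continuity and one-step closure yields only a directed set $E\subseteq\downarrow y$ with $E\rightarrow_{\tau} y$; any $a\in E\cap U$ extracted from it satisfies $a\leq y$ and $a\in U$, nothing more. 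Membership in a directed set converging to $y$ never implies $a\ll_{d} y$ (already in $[0,1]$ with the Scott topology the directed set $\{1\}$ converges to $1$, yet $1\not\ll_{d} 1$), the elements of the Rudin set are merely members of finite sets $F\ll_{d} y$ (which does not make them way below $y$), and the identity $int(\uparrow a)=\Ua_{d} a$ cannot convert $a\leq y$ into $y\in int(\uparrow a)$. The missing idea is to argue by contradiction and to \emph{merge witnesses of failure} rather than extract from a Rudin set: if no $a\in U$ satisfies $a\ll_{d} y$, fix finite $F\subseteq U$ with $F\ll_{d} y$ and, for each $f\in F$, a directed $D_{f}\rightarrow_{\tau} y$ with $D_{f}\cap\uparrow f=\emptyset$. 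The key merging lemma --- if $D_{1},D_{2}\rightarrow_{\tau} y$ then some directed $E\subseteq\downarrow D_{1}\cap\downarrow D_{2}\cap\downarrow y$ has $E\rightarrow_{\tau} y$ --- is exactly where d-meet continuity and one-step closure do real work: given open $U\ni y$, the set $\uparrow(U\cap\downarrow y)$ is open by Lemma \ref{2.14} and contains $y$, so some $d_{1}\in D_{1}$ lies above some $z\in U\cap\downarrow y$; then $\uparrow(U\cap\downarrow z)$ is open and contains $y$, so some $d_{2}\in D_{2}$ lies above some $w\in U\cap\downarrow z$, whence $w\in U\cap\downarrow D_{1}\cap\downarrow D_{2}\cap\downarrow y$; therefore $y\in cl(\downarrow D_{1}\cap\downarrow D_{2}\cap\downarrow y)$ and one-step closure produces $E$. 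Iterating the merge over the finitely many $D_{f}$ gives a directed set converging to $y$ that misses $\uparrow F$, contradicting $F\ll_{d} y$. Without this (or an equivalent) collapsing argument, your proof of the hard half is a plan, not a proof.
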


\begin{thm}\label{6.16} Let $X$ be a d-meet continuous space and $\mathcal{I}$ be a non-trivial ideal of $J$, then the following three conditions
are equivalent to each other.

(1) $X$ is c-space;

(2) $(x_{j})_{j\in J}\rightarrow _{\mathcal{ISL}}x\Leftrightarrow (x_{j})_{j\in J}\rightarrow _{\mathcal{I}}x$ with respect to $\lambda(X)$;

(3) $(x_{j})_{j\in J}\rightarrow _{\mathcal{IGSL}}x\Leftrightarrow (x_{j})_{j\in J}\rightarrow _{\mathcal{I}}x$ with respect to $\lambda(X)$.
\end{thm}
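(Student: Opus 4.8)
The plan is to prove the two equivalences $(1)\Leftrightarrow(2)$ and $(1)\Leftrightarrow(3)$ separately, since both then combine to give the three-way equivalence. The forward implications will be short. For $(1)\Rightarrow(2)$ I would simply invoke Theorem \ref{5.16}, whose hypothesis is exactly that $X$ is a c-space. For $(1)\Rightarrow(3)$ I would first note that every c-space is locally hypercompact: given $y\in U\in\tau$, the defining property of a c-space supplies a point $x_{0}$ with $y\in int(\uparrow x_{0})\subseteq\uparrow x_{0}\subseteq U$, so the singleton $F=\{x_{0}\}$ witnesses Definition \ref{2.10}; then Theorem \ref{6.14} delivers $(3)$. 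Thus all the content sits in the two converses, and I intend to treat them by a single common construction, the novelty of which is the use of d-meet continuity to cope with the Lawson topology.

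For $(2)\Rightarrow(1)$ I would fix $x\in X$ and imitate the nets built in the proofs of Theorems \ref{5.11} and \ref{6.9}, but indexed by the \emph{Lawson}-open neighbourhoods of $x$: take $I=\{(W,n,a)\in\mathcal{N}\times N\times X:a\in W\}$, where $\mathcal{N}$ is the family of $\lambda(X)$-open neighbourhoods of $x$, ordered (as in the cited proofs, so that $I$ is directed) by $(W,m,a)\leq(W',n,b)$ iff $W'\subsetneq W$, or $W=W'$ and $m\leq n$, and set $x_{i}=a$ for $i=(W,n,a)$. This net converges to $x$ with respect to $\lambda(X)$, hence $\mathcal{I}_{0}$-converges to $x$ with respect to $\lambda(X)$; since $\mathcal{I}_{0}$ is non-trivial, hypothesis $(2)$ applies and yields $(x_{i})_{i\in I}\rightarrow_{\mathcal{I}_{0}\mathcal{SL}}x$, and in particular $(x_{i})_{i\in I}\rightarrow_{\mathcal{I}_{0}\mathcal{S}}x$. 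From Definition \ref{5.3} I extract a directed set $D\rightarrow_{\tau}x$ with $\{i:x_{i}\ngeq d\}\in\mathcal{I}_{0}$ for every $d\in D$. By Lemma \ref{2.15} the net is eventually in $\uparrow d$, say past a threshold index $(W_{d},n_{d},a_{d})$; feeding each $w\in W_{d}$ back as the third coordinate shows $W_{d}\subseteq\uparrow d$, so $x\in W_{d}\subseteq\uparrow d$ with $W_{d}\in\lambda(X)$, and shrinking to a basic open set I get $x\in U_{d}\cap(X\backslash\uparrow G_{d})\subseteq\uparrow d$ for some $U_{d}\in\tau$ and finite $G_{d}$ with $x\notin\uparrow G_{d}$.

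The step I expect to be the main obstacle is precisely that $\lambda(X)$ is strictly finer than $\tau$, so the witness $W_{d}$ is only Lawson-open and cannot be fed directly into the $\tau$-convergent directed sets appearing in the definition of $\ll_{d}$. This is exactly where d-meet continuity enters. Because $x\notin\uparrow G_{d}$, any $z\in U_{d}\cap\downarrow x$ must avoid $\uparrow G_{d}$ (otherwise some $g\in G_{d}$ would give $g\leq z\leq x$), so $U_{d}\cap\downarrow x\subseteq U_{d}\cap(X\backslash\uparrow G_{d})\subseteq\uparrow d$. By Lemma \ref{2.14} the set $V_{d}=\uparrow(U_{d}\cap\downarrow x)$ is genuinely $\tau$-open, and it satisfies $x\in V_{d}\subseteq\uparrow d$. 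Hence for any directed $D'\rightarrow_{\tau}x$ one has $D'\cap V_{d}\neq\emptyset$, and every element of this intersection lies above $d$; this forces $d\ll_{d}x$ by Definition \ref{2.3}. Therefore $D\subseteq\Da_{d}x$ and $D\rightarrow_{\tau}x$, so $X$ is a c-space by Theorem \ref{2.5}(3).

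Finally, $(3)\Rightarrow(1)$ would proceed along identical lines with finite sets in place of points. Using the very same net and hypothesis $(3)$, I would obtain $(x_{i})_{i\in I}\rightarrow_{\mathcal{I}_{0}\mathcal{GS}}x$, hence (Definition \ref{6.1}) a directed family $\mathcal{F}\rightarrow_{\tau}x$ with $\{i:x_{i}\notin\uparrow F\}\in\mathcal{I}_{0}$ for each $F\in\mathcal{F}$; Lemma \ref{2.15} and the same basic-open reduction give $x\in U_{F}\cap(X\backslash\uparrow G_{F})\subseteq\uparrow F$ with $x\notin\uparrow G_{F}$. Repeating the d-meet-continuity argument shows $U_{F}\cap\downarrow x\subseteq\uparrow F$ and, via Lemma \ref{2.14}, that $\uparrow(U_{F}\cap\downarrow x)$ is a $\tau$-open neighbourhood of $x$ contained in $\uparrow F$, whence $F\ll_{d}x$ by Definition \ref{2.7}. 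Thus $\mathcal{F}\subseteq fin_{d}(x)$ is a directed family converging to $x$, so $X$ is locally hypercompact by Theorem \ref{2.11}(3); being also d-meet continuous, $X$ is a c-space by Theorem \ref{6.15}. This closes both equivalences, and the only essential ingredient beyond the cited convergence theorems is the d-meet-continuity device that converts Lawson-open witnesses into honest $\tau$-open ones.
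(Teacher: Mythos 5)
Your proposal is correct, and its engine is the same as the paper's: the net indexed by triples $(W,n,a)$ running over Lawson-open neighbourhoods of $x$, Lemma \ref{2.15} to extract a threshold index and hence a Lawson-open set $W\subseteq \uparrow d$ (resp. $\uparrow F$), and d-meet continuity to upgrade that Lawson-open witness to a $\tau$-open one. The differences are architectural and one technical point. Architecturally, the paper proves the cycle $(1)\Rightarrow(2)\Rightarrow(3)\Rightarrow(1)$: step $(1)\Rightarrow(2)$ is Theorem \ref{5.16}, step $(2)\Rightarrow(3)$ needs no construction at all (one direction is Proposition \ref{6.13}, the other is Proposition \ref{6.12}), and only $(3)\Rightarrow(1)$ carries the hard argument, ending as you do with $F\ll_{d}x$, d-quasicontinuity, and Theorem \ref{6.15}. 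You instead prove the two equivalences $(1)\Leftrightarrow(2)$ and $(1)\Leftrightarrow(3)$ separately, so you run the hard construction twice (once with points, once with finite sets) where the paper runs it once; in exchange you obtain a direct proof of $(2)\Rightarrow(1)$, which the paper never writes down, and your $(1)\Rightarrow(3)$ via ``every c-space is locally hypercompact'' plus Theorem \ref{6.14} is a perfectly good substitute for the paper's $(2)\Rightarrow(3)$. The technical point is in your favour: at the step where a basic Lawson neighbourhood $x\in V\backslash\uparrow W\subseteq \uparrow F$ must yield a $\tau$-open neighbourhood of $x$ inside $\uparrow F$, the paper simply asserts that d-meet continuity gives $\uparrow(V\backslash\uparrow W)\in\tau$, which is not literally what Lemma \ref{2.14} provides, since $X\backslash\uparrow W$ need not be of the form $\downarrow G$ for a finite $G$. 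Your reduction --- since $x\notin\uparrow W$, every $z\in V\cap\downarrow x$ avoids $\uparrow W$, so $V\cap\downarrow x\subseteq V\backslash\uparrow W$, and $\uparrow(V\cap\downarrow x)$ is $\tau$-open by Lemma \ref{2.14} applied with $F=\{x\}$, contains $x$, and lies in $\uparrow F$ --- is exactly the missing justification (it is the classical meet-continuity argument that Lawson-open upper sets are open). So your proof is not only correct but fills in a step the paper leaves unjustified at its crux.
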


\begin{proof} $(1)\Rightarrow (2)$ Obviously by Theorem \ref{5.16}.

$(2)\Rightarrow (3)$ we need only to show that if $(x_{j})_{j\in J}\rightarrow _{\mathcal{I}}x$ with respect to $\lambda(X)$, then $(x_{j})_{j\in J}\rightarrow _{\mathcal{IGSL}}x$. Let $(x_{j})_{j\in J}\rightarrow _{\mathcal{I}}x$ with respect to $\lambda(X)$, then $(x_{j})_{j\in J}\rightarrow _{\mathcal{ISL}}x$ and $(x_{j})_{j\in J}\rightarrow _{\mathcal{IGSL}}x$ by Proposition \ref{6.13}.

$(3)\Rightarrow (1)$ For any $x\in X$, we consider the set of all the open neighborhoods $\mathcal{N}(x)=\{U: x\in U\in \lambda(X)\}$ of $x$ and let $I=\{(U, n, a)\in \mathcal{N}(x)\times N\times X:a\in U\}$. Define an order on $I$: $(U, m, a)\leq(V, n, b)$ iff $V$ is a proper subset of $U$ or $U = V$ and $m\leq n$. For each $i=(U, n, a)$, let $x_{i}= a$. Then $(x_{i})_{i\in I}\rightarrow_{\lambda(X)} x$, that is,  $(x_{i})_{i\in I}\rightarrow_{\mathcal{I}_{0}} x$ with respect to $\lambda(X)$. Thus $(x_{i})_{i\in I}\rightarrow_{\mathcal{I}_{0}\mathcal{GSL}} x$. By Definition \ref{6.1} and  \ref{6.11}, there exists a directed family $\mathcal{F}$ such that $\mathcal{F}\rightarrow_{\tau} x$ and for each $F\in \mathcal{F}$, $\{j\in J: x_{j}\notin \uparrow F\}\in \mathcal{I}_{0}$. By Lemma \ref{2.15}, there exists $i_{F} = (U_{F}, n_{F}, a_{F})\in I$ such that $x_{i}\geq x_{i_{F}}$ for any $i\geq i_{F}$. For any $w\in U_{F}$, $i=(U_{F}, n_{F}, w)\geq (U_{F}, n_{F}, a_{F})=i_{F}$, then $w\geq a_{F}=x_{i_{F}}$ and $w\in U_{F}$. So, for any $F\in \mathcal{F}$, there exists $U_{F}$ such that $U_{F}\subseteq \uparrow F$.

Since $x\in U_{F}\in \lambda(X)$, there exist $V\in \tau$ and $W\in \mathcal{P}^{w}(X)$ such that $x\in V\backslash \uparrow W\subseteq U_{F}$. Since $X$ is meet d-continuous, $x\in \uparrow (V\backslash \uparrow W)\in \tau$. Then $x\in (\uparrow F)_{\tau}^{\circ}$. For any directed set $D\rightarrow_{\tau} x$, there exists $d\in D\cap(\uparrow F)_{\tau}^{\circ}$, $d\in \uparrow F$. Hence, $F\ll_{d} x$, $X$ is d-quasicontinuous space. Thus $X$ is c-space by Theorem \ref{6.15}.
\end{proof}

\vspace{1cm} \noindent {\bf Acknowledgments} \small
\def\toto#1#2{\centerline{\hbox to0.7cm{#1\hss}
\parbox[t]{13cm}{#2}}\vspace{2pt}}

This work is supported by the NSFY of China (Nos. 11401435). The authors are grateful
to the referees for their valuable comments which led to the improvement of this paper.

\vspace{1cm} \noindent {\bf Conflict of interest} \small
\def\toto#1#2{\centerline{\hbox to0.7cm{#1\hss}
\parbox[t]{13cm}{#2}}\vspace{2pt}}

The authors declare that there is no conflict of interest in this paper.

\vspace{1cm} \noindent {\bf } \small
\def\toto#1#2{\centerline{\hbox to0.7cm{#1\hss}
\parbox[t]{13cm}{#2}}\vspace{2pt}}

\noindent{\bf References}

\end{document}